\numberwithin{equation}{section} 
\numberwithin{table}{section} 
\theoremstyle{plain}
\newtheorem{theorem}{Theorem}[section]
\newtheorem{proposition}[theorem]{Proposition}
\newtheorem{lemma}[theorem]{Lemma}
\newtheorem{corollary}[theorem]{Corollary}
\newtheorem{remark}[theorem]{Remark}
\newtheorem{definition}[theorem]{Definition}
\newcommand{\cinf}[1]{\textit{C}^{\infty}_{\textrm c}(#1)}
\def\R{\mathbb{R}}
\def\C{\mathbb{C}}
\def\N{\mathbb{N}}
\def\eps{\varepsilon}
\renewcommand{\Re}{\mathop{\mathfrak{Re}}}
\renewcommand{\Im}{\mathop{\mathfrak{Im}}}
\begin{document}

\title[Sharp boundary behavior of eigenvalues]{Sharp boundary behavior of eigenvalues for Aharonov-Bohm operators with varying poles}

\author{Laura Abatangelo}
\address{Laura Abatangelo 
\newline \indent Dipartimento di Matematica e Applicazioni, Università degli Studi di Milano-Bicocca,
\newline \indent  Via Cozzi 55, 20125 Milano, Italy.}
\email{laura.abatangelo@unimib.it}

\author{Veronica Felli}
\address{Veronica Felli 
\newline \indent Dipartimento di Scienza dei Materiali, Università degli Studi di Milano-Bicocca,
\newline \indent Via Cozzi 55, 20125 Milano, Italy.}
\email{veronica.felli@unimib.it}

\author{Benedetta Noris}
\address{Benedetta Noris
\newline \indent Département de Mathématiques, Université Libre de Bruxelles,
\newline \indent CP 214, Boulevard du triomphe, B-1050 Bruxelles, Belgium.}
\email{benedettanoris@gmail.com}

\author{Manon Nys}
\address{Manon Nys 
\newline \indent Dipartimento di Matematica Giuseppe Peano, Università degli Studi di Torino, 
\newline \indent Via Carlo Alberto 10, 10123 Torino, Italy.}
\email{manonys@gmail.com}

\thanks{The authors are partially supported by the project ERC Advanced Grant 2013 n. 339958: ``Complex
Patterns for Strongly Interacting Dynamical Systems --
COMPAT''. L. Abatangelo and V. Felli are partially supported by the
2015 INdAM-GNAMPA research project ``Operatori di Schr\"odinger con
potenziali elettromagnetici singolari: stabilit\`{a} spettrale e stime
di decadimento''. V. Felli is partially supported by PRIN-2012-grant ``Variational and perturbative aspects of nonlinear differential problems''}

\date{May 31, 2016}

\subjclass[2010]{
35P15,   	%Estimation of eigenvalues, upper and lower bounds
35J10,          %Elliptic equations and systems/Schrödinger operator 
35J75,   	%Singular elliptic equations
35B40,      %Asymptotic behavior of solutions 
35B44,   	%Blow-up
}

\keywords{Aharonov-Bohm operators, Almgren monotonicity formula, spectral theory}

\begin{abstract}
  In this paper, we investigate the behavior of the eigenvalues of a
  magnetic Aharonov-Bohm operator with half-integer circulation and
  Dirichlet boundary conditions in a bounded planar domain. We establish a sharp relation between
  the rate of convergence of the eigenvalues as the singular pole is
  approaching a boundary point and the number of nodal lines of the
  eigenfunction of the limiting problem, i.e.\ of the Dirichlet
  Laplacian, ending at that point. The proof relies on the construction of a limit profile
  depending on the direction along which the pole is moving, and on an
  Almgren-type monotonicity argument for magnetic operators.
\end{abstract}

\maketitle

\section{Introduction}\label{sec:introduction}

This paper is concerned with the behavior of the eigenvalues of
Aharonov-Bohm operators in a planar domain with poles approaching the boundary.
For $a = (a_1, a_2) \in \mathbb{R}^2$,
we consider the so-called Aharonov-Bohm magnetic potential with pole
$a$ and circulation $1/2$
\[
A_a(x) = \frac12 \left( \frac{ - (x_2 - a_2)}{(x_1 - a_1)^2 + (x_2 - a_2)^2} , \frac{x_1 - a_1}{(x_1 - a_1)^2 + (x_2 - a_2)^2} \right), \quad x=(x_1,x_2) \in \mathbb{R}^2 \setminus \{a\},
\]
which gives rise to the singular magnetic field $B_a=\mathop{\rm curl}
A_a = \pi \delta_a{\mathbf k}$, where ${\mathbf k}$ is the
unit vector orthogonal to the $x_1x_2$-plane and $\delta_a$ is the
Dirac delta centered at $a$.  Such a magnetic field is generated by an
infinitely long and infinitely thin solenoid intersecting the plane
$x_1x_2$ perpendicularly at $a$.  By Stokes' Theorem, the flux of the
magnetic field through the solenoid cross section is equal (up to the
normalization factor $2\pi$) to the circulation of the vector
potential $A_a$ around the pole $a$, which remains identically equal to
$1/2$.

We consider  the magnetic Schrödinger operator $(i \nabla + A_a)^2$ with
Aharonov-Bohm vector potential $A_a$ 
which acts on functions $u \, : \, \mathbb{R}^2  \to \C$ as
\begin{equation}\label{eq:1}
(i \nabla + A_a)^2 u := - \Delta u + 2 i A_a \cdot \nabla u + |A_a|^2 u,
\end{equation}
and study the properties of the function mapping the position of the pole $a$ to the eigenvalues of the operator \eqref{eq:1} on a bounded domain with homogeneous Dirichlet boundary conditions.

As highlighted in \cite{bonnaillie2014eigenvalues}, the case of
half-integer circulation features a relation between critical
positions of the moving pole and spectral minimal partitions of the
Dirichlet Laplacian.  It was proved in \cite{HHT2009} that the optimal
partition (i.e. the partition of the domain minimizing the largest of
the first eigenvalues on the components) corresponds to the nodal
domain of an eigenfunction of the Dirichlet Laplacian if it has only
points of even multiplicity; the optimal partitions with points of odd
multiplicity are instead related to the
eigenfunctions of the Aharonov-Bohm operator, in the sense that they
can be obtained as 
nodal domains by minimizing a certain eigenvalue of an Aharonov-Bohm Hamiltonian with respect to the number and the position of poles, 
 see \cite{HelfferHO2013}.  We also refer to
\cite{BH,BHH,BHV,HHHO1999,HHOHOO,noris2010nodal} for the study of the eigenfunctions,
their nodal domains and spectral minimal partitions.

The present paper focuses on the behavior of the eigenvalues of the
operator \eqref{eq:1} when the pole $a$ is moving in the domain reaching a
point on the boundary.  Our analysis proceeds by the papers
\cite{abatangelo2015leading,abatangelo2015sharp,bonnaillie2014eigenvalues},
which provide the asymptotic expansion of the eigenvalue function as
the pole is moving in the interior of the domain.  On the other hand,
the study of the case of a pole approaching the boundary was initiated
in \cite{noris2015aharonov}. In this case the limit operator is no
more singular and the magnetic eigenvalues converge to those of the
standard Laplacian. In \cite{noris2015aharonov} the authors predict
the rate of this convergence in relation with the number of nodal
lines that the limit eigenfunction possesses at the limit point.
More precisely, let us denote as $\lambda_N^a$ the $N$-th eigenvalue of the
operator \eqref{eq:1} in a planar domain $\Omega$ with Dirichlet boundary
conditions and as $\lambda_N$ the   $N$-th eigenvalue of the Dirichlet
Laplacian on the same domain;  in \cite{noris2015aharonov} it is proved
that if $\lambda_N$ is simple and the corresponding eigenfunction
$\varphi_N$ has
at a point $b\in\partial \Omega$ a zero of order $j\geq 2$ (so that
$\varphi_N$ has $j-1$ nodal lines ending at $b$) then 
\begin{equation}\label{eq:4}
\lambda_N^a- \lambda_N\leq -C|a-b|^{2j}
\end{equation}
for $a$ moving on a nodal line approaching  $b$, where $C>0$ is a positive constant. In
particular, estimate \eqref{eq:4} implies that, if the pole stays on a nodal
line, then the magnetic eigenvalue is strictly smaller than  the standard
Laplacian's one, thus showing that a diamagnetic-type inequality is not necessarily true for eigenvalues higher than the first one. In the case of the pole approaching a boundary point
$b$ where no nodal lines of $\varphi_N$ end, in \cite{noris2015aharonov} it is proved
that
\begin{equation}\label{eq:5}
\lambda_N^a- \lambda_N\geq C(\mathop{\rm dist}(a,\partial\Omega))^{2}
\end{equation}
as $a\to b$, where $C$ is a positive constant. Estimate \eqref{eq:5} was shown to be sharp in
\cite[Theorem 2.1.15]{Nys-thesis}, where the following exact asymptotics was obtained: 
\begin{equation}\label{eq:7}
\frac{\lambda_N^a- \lambda_N}{(\mathop{\rm
    dist}(a,\partial\Omega))^{2}}\to c(\nabla\varphi_N(b)\cdot\nu)^2
\end{equation}
as $a$ converges to some $b\in\partial\Omega$  where no nodal lines
end, where $c$ is a positive constant.

In the present paper, we describe the asymptotic behavior of the
eigenvalue $\lambda_N^a$ as the pole $a$ approaches a point on the boundary of $\Omega$ moving on
straight lines (not necessarily tangent to nodal lines of the limit
eigenfunction), with the aim of sharpening and generalizing the results in \cite{noris2015aharonov}.
Our main theorem states that, if $\partial\Omega$
is sufficiently smooth, $\lambda_N$ is
simple, and $\varphi_N$ has $j-1$ ($j\in\N$, $j\geq1$) nodal lines
ending at $b\in\partial\Omega$, then the limit of the quotient 
\begin{equation}\label{eq:8}
\frac{\lambda_N-\lambda_N^a}{|a-b|^{2j}},
\end{equation}
as $a$ approches $b$ on a straight line, exists, is finite and
depends continuously on the line direction; furthermore  such a limit is strictly
positive if the line is tangent to a nodal line of $\varphi_N$, while
it is strictly negative if the moving pole direction is 
 in the middle of the tangents to two nodal lines (Theorem
\ref{t:main_straight1}).
This establishes, in particular, that a diamagnetic-type inequality $\lambda_N^a>\lambda_N$ holds for eigenvalues higher than the first one, when $a$ lies in the middle of the tangents to two nodal lines of $\varphi_N$ (or in the middle between a tangent and the boundary). The opposite inequality $\lambda_N^a<\lambda_N$ holds when $a$ belongs to the tangent to a nodal line of $\varphi_N$. Thus, the diamagnetic inequality for this specific operator can be seen as a particular case of Theorem \ref{t:main_straight1}, due to the fact that $\varphi_1$ does not have nodal lines.

Furthermore, we provide a variational characterization of the limit
of the quotient \eqref{eq:8}, by relating it to the minimum of an energy  functional 
associated to an elliptic problem with a crack sloping at the moving
pole direction (Theorem \ref{t:main_straight2}).

Theorem \ref{t:main_straight1} implies that estimate
\eqref{eq:4} is optimal, thus 
generalizing the sharp estimate \eqref{eq:7} to any order of vanishing
of the limit eigenfunction.
Furthermore, our result  answers a question left open in \cite[Remark
1.9]{noris2015aharonov} about the  exact behavior of the eigenvalue
variation $\lambda_N^a- \lambda_N$ as the pole $a$
approaches a boundary point $b$, being $b$ the endpoint of one or more nodal
lines of the limit eigenfunction and $a$ not
belonging to any such nodal line; indeed, as a byproduct of Theorem
\ref{t:main_straight1}, we have that
$\lambda_N^a$ increases  as $a$ is moving 
from a boundary point on the bisector of two nodal lines of the
Dirichlet-Laplacian, or on the bisector of one nodal line and the
boundary, as conjectured in
\cite{noris2015aharonov,Nys-thesis}.

\section{Statement of the main results} \label{sec:results}

Let $\Omega\subset\R^2$ be a bounded, open and simply connected domain. 
We assume that $\Omega\in C^{2,\gamma}$ for some $0<\gamma<1$, and
that 
\[
0\in \partial\Omega.
\] 
Furthermore, it is convenient to suppose that there exists $\bar{R}>0$ such that
\begin{equation} \label{eq:rectifiedbdd}
\Omega \cap D_{\bar{R}}=D_{\bar{R}}^+,
\end{equation}
where $D_{\bar{R}}^+$ is defined as
\[
D_{\bar{R}}^+ := D_{\bar{R}} \cap \R^2_+,
\]
being $D_{\bar{R}}$ the
 open ball of radius $\bar{R}$ centered at $0$ and 
\[
\R^2_+:=\{(x_1,x_2)\in\R^2: x_1 > 0 \}.
\]  
We stress that
this assumption is not restrictive provided that a weight is
considered in the eigenvalue problem.  Starting from a general domain
of class $C^{2,\gamma}$, we can indeed perform a conformal
transformation in order to obtain a new domain satisfying
\eqref{eq:rectifiedbdd}: the counterpart is the appearance of a
conformal weight (real valued) in the new problem, whose regularity is
$C^1(\overline{\Omega})$ thanks to the regularity assumptions on the
domain (see \cite[Theorem 5.2.4]{KrantzBook}). More specifically, the
weight verifies
\begin{equation}\label{eq:q_assumptions}
q(x)\in C^1(\overline{\Omega}), \quad q(x) > 0 \text{ for } x\in \Omega.
\end{equation} 
For more details, we refer the \cite[Section 3]{noris2015aharonov}.

For every $a\in\overline{\Omega}$, we introduce the space $H^{1 ,a}(\Omega,\C)$ as the completion of
\[
\left\{ u \in H^1(\Omega,\C) \cap C^\infty(\Omega,\C)  :  u \text{
    vanishes in a  neighborhood of } a \right\}
\]
with respect to the norm 
\begin{equation}\label{eq:norm_def}
\|u\|_{H^{1,a}(\Omega,\C)}=\left(\left\|\nabla u\right\|^2_{L^2(\Omega,\C^2)} 
+ \|u\|^2_{L^2(\Omega,\C)}
+ \left\|\frac{u}{|x-a|}\right\|^2_{L^2(\Omega,\C)}\right)^{\!\!1/2}.
\end{equation}
For every $a\in\overline\Omega$, we also introduce the space $H^{1 ,a}_0(\Omega,\C)$ 
as the completion of $C^\infty_c(\Omega \setminus \{a\})$
with respect to the norm $\|\cdot\|_{H^{1,a}(\Omega,\C)}$.
In view of the
Hardy-type inequality proved in \cite{LW99} (see 
\eqref{eq:hardy_R2}) and of the Poincaré-type inequality \eqref{eq:Poincare}, 
an equivalent norm in $H^{1,a}_0(\Omega,\C)$ is given by 
\begin{equation}\label{eq:norma}
\|u\|_{H^{1,a}_0(\Omega,\C)}=  \left(\left\|(i\nabla+A_{a}) u\right\|^2_{L^2(\Omega,\C^2)} \right)^{\!\!1/2}.
\end{equation}
As a consequence of the equivalence between norms \eqref{eq:norm_def}
and \eqref{eq:norma}, by gauge invariance it follows that
\begin{equation}\label{eq:equivalent_spaces}
\begin{split}
&\text{if $a\in\partial \Omega$, then the space $H^{1,a}_0(\Omega,\C)$
  coincides with the standard $H^1_0(\Omega,\C)$}\\
&\text{and the norms  \eqref{eq:norm_def}, \eqref{eq:norma}
are therein  equivalent to the Dirichlet norm $\|\nabla u\|_{L^2(\Omega,\C^2)}$. 
}
\end{split}
\end{equation} 
For every $a\in\overline{\Omega}$
and any weight $q(x)$ verifying \eqref{eq:q_assumptions}, we consider the weighted eigenvalue
problem
\begin{equation}\label{eq:eige_equation_a} \tag{$E_a$}
  \begin{cases}
    (i\nabla + A_{a})^2 u = \lambda\, q(x) u,  &\text{in }\Omega,\\
    u = 0, &\text{on }\partial \Omega,
 \end{cases}
\end{equation}
in a weak sense, i.e.\ we say that $\lambda$ is an eigenvalue of \eqref{eq:eige_equation_a} if there exists an eigenfunction $u \in H^{1,a}_0(\Omega, \C) \setminus \{0\}$ such that
\[
\int_{\Omega} (i \nabla + A_a) u \cdot \overline{(i \nabla + A_a) v} \, dx = \lambda \int_{\Omega} q(x) u \overline{v} \, dx, \quad \text{ for all } v \in H^{1,a}_0(\Omega, \C).
\]
From classical spectral theory, $(E_a)$ admits
a diverging sequence of real eigenvalues $\{\lambda_k^a\}_{k\geq 1}$
with finite multiplicity (being each eigenvalue repeated according to
its own multiplicity). To each eigenvalue $\lambda_k^a$ we associate 
an eigenfunction $\varphi_k^a$ suitably normalized (see
\eqref{eq:6} and \eqref{eq:orthonormality}). When $a\in\partial\Omega$, hence in particular when $a=0$,
$\lambda_k^a = \lambda_k$, being $\lambda_k$ the $k$-th weighted
eigenvalue  of the Dirichlet Laplacian (with the same weight $q(x)$);
moreover, 
if \begin{equation*}
\tilde\theta_0:\R^2\setminus\{0\}\to [-\pi,\pi), \quad
\tilde\theta_0(r\cos t,r\sin t)=t\quad\text{if }t\in [-\pi,\pi),
\end{equation*}
is the polar angle centered at $0$ and discontinuous on the half-line
$\{(x_1, 0) \, : \, x_1 < 0 \}$, we have that 
$e^{-\frac i2 \tilde\theta_0}\varphi_k^0 = \varphi_k$ is a  weighted eigenfunction  of
the Laplacian associated to $\lambda_k$, i.e.
\begin{equation}\label{eq:19}
  \begin{cases}
-\Delta\varphi_k=\lambda_k q(x) \varphi_k,  &\text{in }\Omega,\\
    \varphi_k = 0, &\text{on }\partial \Omega.
 \end{cases}
\end{equation}
From \cite[Theorem 1.1]{bonnaillie2014eigenvalues} and \cite[Theorem 1.2]{Lena2015} it is known that, 
for every $k\in \N\setminus\{0\}$, there holds
\begin{equation}\label{eq:conv_autov}
\lambda_k^a \to \lambda_k \quad\text{as }a\to 0. 
\end{equation}
Let us assume that there exists $N\geq 1$ such that
\begin{equation}\label{eq:simple_eigenv_n0}
  \lambda_{N} \quad\text{is simple}.
\end{equation}
We observe that, in view of \cite{Micheletti}, assumption
\eqref{eq:simple_eigenv_n0}
  holds generically with respect to domain (and weight) variations.
Let $\varphi_N\in H^{1}_{0}(\Omega,\C)\setminus\{0\}$ be an
eigenfunction of problem \eqref{eq:19} associated to the eigenvalue
$\lambda_N$ such that 
\begin{equation}\label{eq:9}
\int_\Omega q(x) |\varphi_N(x)|^2\,dx=1.
\end{equation}
From \cite{HW} and \cite{HHT2009} (see also \cite{FF2013})
it is known that
 \begin{equation}\label{eq:zero_of_order_j}
  \varphi_N \text{ has at } 0 \text{ a zero
    of order } j \text{ for some }  j\in \N \setminus \{0\};
\end{equation}
more precisely, there exists $\beta\in\C\setminus\{0\}$ such that 
\begin{equation}\label{eq:asyphi0}
  r^{-j} \varphi_N(r(\cos t,\sin t)) \to 
  \beta
  \psi_j (\cos t,\sin t)
=\beta
 \sin
  \left(j\left(\tfrac\pi2 - t\right)\right),
 \end{equation}
in $C^{1,\tau}([-\frac\pi2,\frac\pi2],\C)$ as $r\to0^+$ for any
$\tau\in (0,1)$. 
Here, for every $j \in \mathbb{N} \setminus \{0\}$, $\psi_j$ is the unique function 
(up to a multiplicative constant) which is harmonic in $\R^2_+$, 
homogeneous of degree $j$ and vanishing on $\partial \R^2_+$, more explicitly
\begin{equation}\label{eq:psi_j}
  \psi_j(r\cos t,r\sin t)= r^{j} \sin
  \left(j\left(\tfrac\pi2 - t\right)\right),\quad 
  r\geq0,\quad t\in\left[-\tfrac\pi2,\tfrac\pi2\right].
\end{equation}
We notice that $\psi_j$ has exactly $j-1$ nodal lines (except for the boundary) 
dividing the $\pi$-angle in equal parts.
Moreover, via a change of gauge, 
\[
\text{the function }e^{\frac i2 \tilde\theta_0 }\psi_j 
\text{ is a distributional solution to }
 (i\nabla + A_0)^2  \big(e^{\frac i2 \tilde\theta_0 }\psi_j\big) =0  \text{ in }\R^2_+.
\]
Let 
\[
\varphi_N^0 = \varphi_N e^{\frac i2 \tilde\theta_0},
\]
so that $\varphi_N^0$ is an
eigenfunction of problem $(E_0)$ associated to the eigenvalue
$\lambda_N$.

As already mentioned, we aim at proving sharp asymptotics for the
convergence \eqref{eq:conv_autov} as the pole $a$ moves along a straight
line up to the origin, see Figure \ref{fig:1}.
 More precisely, we fix 
\[
p\in {\mathbb S}^1_+:=\{(x_1,x_2)\in\R^2:x_1^2+x_2^2=1\text{ and
}x_1>0\},
\]
and study the limit of the quotient
\eqref{eq:8}
as $a=|a|p\to 0$, giving a characterization of such a limit in
terms of the direction $p$, which allows recognizing directions for
which it is nonzero (and possibly positive or negative).  

\begin{figure}
\begin{minipage}[b]{7.8cm}
   \centering
\begin{tikzpicture}[scale=0.5]
  \draw[fill, color = black, opacity=0.1] (0,3) -- (0,-3) to
  [out=270,in=180] (2,-6) 
to [out = 0, in = 200] (4, -4)
 to [out = 20, in = 240] (7, 0) 
  to [out=60, in = 270] (7.5, 2) to [out=90, in= 340] (4, 4) 
  to [out = 160, in = 0] (2,6) to [out= 180, in = 90] (0,3);
  \draw[line width=1pt] (0,-2) to (0,2);
  \draw[dotted,line width=1pt] (0,2) to (0,3);
  \draw[dotted,line width=1pt] (0,-2) to (0,-3);
    \draw[line width=1pt] (0,0) to [out=54, in=-90] (1,3);
  \draw[dotted,line width=1pt] (1,3) to [out=-90, in=-90] (1,4);  
   \draw[line width=1pt] (0,0) to [out=18, in=-108] (2.5,2.5);
 \draw[dotted,line width=1pt] (2.5,2.5) to [out=-108, in=-108] (2.8,3.5); 
   \draw[line width=1pt] (0,0) to [out=-18, in=108] (2.5,-2.5);
\draw[dotted,line width=1pt] (2.5,-2.5) to [out=108, in=108] (2.8,-3.5); 
 \draw[line width=1pt] (0,0) to [out=-54, in=90] (1,-3);
 \draw[dotted,line width=1pt] (1,-3) to [out=-90, in=-90] (1,-4);  
\fill (0,0) circle (3pt) node[above left] {$0$};
\draw[line width=0.5pt]  (0,0) -- (4,1);
\draw[line width=0.5pt]  (0,0) -- (4,0);
\draw[line width=2pt,->] (4,1) -- (3,0.75);
\fill (4,1) circle (3pt) node[above right] {\scriptsize $a$};
\draw[line width=0.5pt,->] (3,0) to [out=95, in=-50] (2.8,0.7);
\node at (3.2,0.3) {\scriptsize $\alpha$};
                              \end{tikzpicture}
\caption{The $j-1$ nodal lines of $\varphi_N$ ending at $0$  
dividing the $\pi$-angle into $j$ equal parts; 
$a$ approaches $0$ along the straight line $a=|a|p$,  $p=(\cos\alpha,\sin\alpha)$.}
\label{fig:1}
\end{minipage}
\
 \begin{minipage}[b]{7.8cm}
\centering
\begin{tikzpicture}[scale=0.5]
  \draw[fill, color = black, opacity=0.1] (0,3) -- (0,-3) to
  [out=270,in=180] (2,-6) 
to [out = 0, in = 200] (4, -4)
 to [out = 20, in = 240] (7, 0) 
  to [out=60, in = 270] (7.5, 2) to [out=90, in= 340] (4, 4) 
  to [out = 160, in = 0] (2,6) to [out= 180, in = 90] (0,3);
    \draw[line width=0.3pt] (0,0) to [out=54, in=-90] (1,3);
   \draw[line width=0.3pt] (0,0) to [out=18, in=-108] (2.5,2.5);
   \draw[line width=0.3pt] (0,0) to [out=-18, in=108] (2.5,-2.5);
 \draw[line width=0.3pt] (0,0) to [out=-54, in=90] (1,-3);
\fill (0,0) circle (3pt) node[above left] {$0$};
   \draw[line width=0.7pt] (0,0) to (1.16,1.6);
\node at (1.392,1.92) {\tiny $+$};
\draw (1.392,1.92) circle (0.25cm);
\node at (1.392,-1.92) {\tiny $+$};
\draw (1.392,-1.92) circle (0.25cm);
    \draw[line width=0.7pt] (0,0) to (1.902,0.61);
\node at (2.292,0.732) {\tiny $+$};
\draw (2.292,0.732) circle (0.25cm);
\node at (2.292,-0.732) {\tiny $+$};
 \draw (2.292,-0.732) circle (0.25cm);
   \draw[line width=0.7pt] (0,0) to (1.16,-1.6);
    \draw[line width=0.7pt] (0,0) to (1.902,-0.61);
    \draw[dashed,line width=0.7pt] (0,0) to (0.61,1.902);
\node at (0.732,2.304) {\tiny $-$};
\draw (0.732,2.304) circle (0.25cm);
\node at (0.732,-2.304) {\tiny $-$};
\draw (0.732,-2.304) circle (0.25cm);
 \draw[dashed,line width=0.7pt] (0,0) to (1.57,1.23);
\node at (1.884,1.476) {\tiny $-$};
\draw (1.884,1.476) circle (0.25cm);
\node at (1.884,-1.476) {\tiny $-$};
\draw (1.884,-1.476) circle (0.25cm);
    \draw[dashed,line width=0.7pt] (0,0) to (0.61,-1.902);
  \draw[dashed,line width=0.7pt] (0,0) to (1.57,-1.23);
  \draw[dashed,line width=0.7pt] (0,0) to (2,0);
\node at (2.4,0) {\tiny $-$};
\draw (2.4,0) circle (0.25cm);
  \end{tikzpicture}
\caption{The sign of the eigenvalue variation $\lambda_N-\lambda_N^a$: 
 positive tangentially to
  nodal lines, negative on bisectors of nodal lines.}
\label{fig:2}
\end{minipage}
\end{figure}

We are now in position to state our first main result.

\begin{theorem}\label{t:main_straight1}
Let
$\Omega\subset\R^2$ be a bounded, open and simply connected
domain of class $C^{2,\gamma}$ for some $0<\gamma<1$, such that
$0\in\partial\Omega$ and  \eqref{eq:rectifiedbdd} holds. Let $q$
satisfy \eqref{eq:q_assumptions}.
Let $N\geq 1$ be such that the $N$-th eigenvalue $\lambda_N$ of
problem \eqref{eq:19} is simple and let 
$\varphi_N\in H^{1}_{0}(\Omega,\C)\setminus\{0\}$ be an eigenfunction 
of \eqref{eq:19} associated to $\lambda_N$ satisfying
\eqref{eq:9}. 
Let $j\in\N\setminus\{0\}$ be the order of vanishing of
$\varphi_N$ at $0$ as in
\eqref{eq:zero_of_order_j}--\eqref{eq:asyphi0}. For $a\in\Omega$, let $\lambda_{N}^a$ be
the $N$-th eigenvalue of problem \eqref{eq:eige_equation_a}.

Then, for every $p\in {\mathbb S}^1_+$, there exists ${\mathfrak
  c}_p\in\R$ such that 
\begin{equation}\label{eq:38}
\frac{\lambda_N-\lambda_N^a}{|a|^{2j}}\to
 |\beta|^2\,{\mathfrak c}_p,\quad\text{as $a=|a|p\to 0$},
\end{equation}
with $\beta\neq0$ being as in \eqref{eq:asyphi0}. Moreover
\begin{enumerate}[\rm (i)]
\item the function $p\mapsto {\mathfrak c}_p$ is continuous on
  ${\mathbb S}^1_+$ and tends to $0$ as $p\to (0,\pm 1)$;
\item  $\mathfrak{c}_p>0$ if the half-line $\{tp:t\geq0\}$ is  tangent
  to a nodal line of $\varphi_N$ in $0$, i.e. if, for some $k=1,\dots,j-1$,
  $p=\big(\cos(\tfrac\pi2-k\frac\pi j),\sin(\tfrac\pi2-k\frac\pi
  j)\big)$;
\item $\mathfrak{c}_p<0$ if the half-line $\{tp:t\geq0\}$ is tangent
  to the bisector of two nodal lines of $\varphi_N$ or to the
  bisector of one nodal line and the boundary, i.e. if, for some
  $k=0,\dots,j-1$,
  $p=\big(\cos(\tfrac\pi2-\frac\pi{2j}(1+2k)),\sin(\tfrac\pi2-\frac\pi{2j}(1+2k))\big)$.
\end{enumerate}
\end{theorem}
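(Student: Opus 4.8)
The plan is to carry out a blow-up analysis at the boundary point $0$ at the scale $|a|$, to isolate a limit profile $\Psi_p$ depending only on the direction $p$, and then to transfer the information carried by $\Psi_p$ to the eigenvalue variation through an exact energy identity. I would first construct, for every $p=(\cos\alpha,\sin\alpha)\in{\mathbb S}^1_+$, the profile $\Psi_p$ on the half-plane $\R^2_+$: it solves $(i\nabla+A_p)^2\Psi_p=0$ in $\R^2_+$, vanishes on $\partial\R^2_+$, and is asymptotic to $e^{\frac i2\tilde\theta_0}\psi_j$ at infinity. Fixing the gauge so that the Aharonov--Bohm singularity at $p$ is resolved by a cut along the segment $[0,p]$ --- the crack ``sloping at the pole direction'' --- the desingularisation of $\Psi_p$ is the real function minimising the renormalised Dirichlet energy $\mathcal{E}_p(\phi):=\lim_{R\to\infty}\int_{D_R^+}\big(|\nabla\phi|^2-|\nabla\psi_j|^2\big)$ over functions on $\R^2_+\setminus[0,p]$ that vanish on $\partial\R^2_+$, are antisymmetric across $[0,p]$, and are asymptotic to $\psi_j$; existence, uniqueness, and the sharp decay $\Psi_p-e^{\frac i2\tilde\theta_0}\psi_j=O(|y|^{-j})$ would follow from coercivity (a trace inequality on $[0,p]$ together with the Hardy--Poincaré inequality) and strict convexity of $\mathcal{E}_p$. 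I would then \emph{define} $\mathfrak{c}_p$ as $-\mathcal{E}_p(\Psi_p)$, up to a fixed positive normalisation; equivalently, $\mathfrak{c}_p$ is the coefficient of the $|y|^{-j}$-mode in the expansion of $\Psi_p$ at infinity.

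Next, writing $a=|a|p$, I would set $\widehat\varphi_a(y):=|a|^{-j}\varphi_N^{a}(|a|y)$ on the rescaled domain $\Omega/|a|$, which exhausts $\R^2_+$ thanks to \eqref{eq:rectifiedbdd}; it solves $(i\nabla+A_p)^2\widehat\varphi_a=|a|^2\lambda_N^{a}\,q(|a|y)\,\widehat\varphi_a$, whose right-hand side tends to $0$, and the Dirichlet condition passes to $\partial\R^2_+$. The essential tool here is the Almgren-type monotonicity formula for the magnetic operator: its doubling consequence, combined with the fact that $\varphi_N$ vanishes \emph{exactly} to order $j$ at $0$ (see \eqref{eq:zero_of_order_j}--\eqref{eq:asyphi0}), yields uniform-in-$a$ bounds for $\widehat\varphi_a$ in the magnetic Sobolev norm on every half-ball and pins down the correct blow-up rate $|a|^{j}$. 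Matching $\widehat\varphi_a$ with the interior expansion \eqref{eq:asyphi0} of $\varphi_N$ at scales $1\ll|y|\ll|a|^{-1}$ forces the behaviour $\widehat\varphi_a\to\beta\,e^{\frac i2\tilde\theta_0}\psi_j$ at infinity, so that, after fixing the phase of $\varphi_N^a$, one identifies $\widehat\varphi_a\to\beta\,\Psi_p$, strongly in $H^1_{loc}$ and in $C^1_{loc}(\overline{\R^2_+}\setminus\{p\})$. In particular, on a fixed arc $\partial D_\delta\cap\overline{\R^2_+}$ one gets $\varphi_N^{a}-\varphi_N^{0}=\beta\,|a|^{2j}(\kappa_p+o(1))$, where $\kappa_p$ is the $|y|^{-j}$-mode of $\Psi_p$ read on that arc.

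To establish \eqref{eq:38} I would test the weak formulations of $(E_a)$ and $(E_0)$ against a common, gauge-adjusted function equal to $\varphi_N^0$ outside $D_\delta$, obtaining an exact identity expressing $(\lambda_N-\lambda_N^{a})\int_\Omega q\,\varphi_N^0\,\overline{\varphi_N^{a}}\,dx$ in terms of a boundary integral over $\partial D_\delta\cap\overline{\R^2_+}$, sesquilinear in $\varphi_N^{a},\varphi_N^0$ and their magnetic gradients, plus terms supported in $D_\delta$ controlled at intermediate scales $|a|\ll r\ll1$ again by the monotonicity formula. After the leading $\delta^{2j}$-terms cancel, the first surviving contribution pairs the $O(|a|^{2j})$ difference $\varphi_N^{a}-\varphi_N^{0}$ against the leading $\delta^{j}$-part of $\varphi_N^0$; inserting the expansions above and using Green's identity to recognise the emerging ($\delta$-independent) constant as a positive multiple of $-\mathcal{E}_p(\Psi_p)$, one obtains $(\lambda_N-\lambda_N^{a})/|a|^{2j}\to|\beta|^2\,\mathfrak{c}_p$. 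Continuity of $p\mapsto\mathfrak{c}_p$ would follow from the stability of the minimisation of $\mathcal{E}_p$ under the continuous deformation of the cut $[0,p]$, and $\mathfrak{c}_p\to0$ as $p\to(0,\pm1)$ because then the cut collapses onto $\partial\R^2_+$ and $\Psi_p\to e^{\frac i2\tilde\theta_0}\psi_j$. For the sign in case (ii): if $\{tp:t\ge0\}$ is tangent to a nodal line of $\varphi_N$, then $[0,p]$ lies along a nodal line of $\psi_j$, so $\psi_j$ itself is an admissible competitor with $\mathcal{E}_p(\psi_j)=0$; a first-variation computation shows that $\psi_j$ is not stationary --- the variation picks up a nonzero boundary term on $[0,p]$ involving the normal derivative of $\psi_j$ --- whence $\mathcal{E}_p(\Psi_p)<0$ and $\mathfrak{c}_p>0$. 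For the sign in case (iii): when $\{tp:t\ge0\}$ bisects two nodal lines (or a nodal line and the boundary), $\psi_j$ is not admissible, and, exploiting the reflection symmetry of $\psi_j$ about the bisecting ray (which maps $\psi_j$ to $\pm\psi_j$) together with a comparison with the Dirichlet crack problem, one shows $\mathcal{E}_p(\Psi_p)>0$, hence $\mathfrak{c}_p<0$.

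The main obstacle is the quantitative core of the second and third steps: one must push the Almgren monotonicity formula far enough to control $\varphi_N^{a}-\varphi_N^{0}$ at \emph{all} intermediate scales $|a|\ll r\ll1$ with an error that is genuinely $o(|a|^{2j})$, not merely $O(|a|^{2j})$, so that the boundary-integral identity delivers the exact limit \eqref{eq:38} and not just a two-sided estimate; this is inseparable from establishing the uniqueness and non-degeneracy of $\Psi_p$ in the right weighted space and the strong (rather than weak) convergence of the blow-up. A second genuinely delicate point is the strict positivity $\mathcal{E}_p(\Psi_p)>0$ in the bisector case, where $\psi_j$ is no longer a competitor and the inequality has to be drawn from the fine structure of $\psi_j$ near its extremal rays --- note that, by continuity, $\mathfrak{c}_p$ must change sign (hence vanish) somewhere between a nodal-line direction and a bisecting one, so no soft argument can settle the sign uniformly.
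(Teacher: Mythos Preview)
Your overall architecture---blow-up at scale $|a|$, limit profile $\Psi_p$ characterised variationally on the half-plane with a crack along $[0,p]$, Almgren monotonicity for a priori bounds, and sign analysis based on whether $\psi_j$ vanishes on the crack---matches the paper's. But the mechanism you propose for extracting the exact limit \eqref{eq:38} is genuinely different from the paper's, and your route has a gap at the a priori step.

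The paper does \emph{not} use a single boundary integral identity. Instead it produces two separate one-sided bounds via the Courant--Fisher minimax: an upper bound for $\lambda_N-\lambda_N^a$ obtained by building competitors for $\lambda_N$ out of $\varphi_k^a$ (replaced in $D_{R|a|}^+$ by $A_0$-harmonic extensions), and a lower bound obtained symmetrically by building competitors for $\lambda_N^a$ out of $\varphi_k^0$. The upper bound is expressed in terms of the auxiliary normalisation $H_a=H(\varphi_N^a,\bar K|a|)$, not $|a|^{2j}$; only after a separate energy estimate (invertibility of the Fr\'echet differential of an operator $F(\lambda,\varphi)$ at $(\lambda_N,\varphi_N^0)$) does one obtain $|a|^{2j}=O(H_a)$ and then the exact limit of $H_a/|a|^{2j}$ via the blow-up. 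Both bounds are then shown to converge, as $R\to\infty$, to the same quantity $-2|\beta|^2\mathfrak m_p$, where $\mathfrak m_p=\min_{\mathcal K_p}J_p$ is computed as a Fourier coefficient of $w_p$ on $\partial D_1^+$.

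This bears directly on your gap: you assert that the Almgren frequency ``pins down the correct blow-up rate $|a|^{j}$'' for $\widehat\varphi_a(y)=|a|^{-j}\varphi_N^a(|a|y)$, but monotonicity for $\varphi_N^a$ only controls $\widehat\varphi_a$ in terms of $H_a$; it gives $H_a\ge c\,|a|^{2(j+\delta)}$ and $H_a=O(|a|^2)$, neither of which yields boundedness of $|a|^{j}/\sqrt{H_a}$. That boundedness is exactly what the paper's Section~8 supplies (via $dF$), and without it your blow-up sequence is not known to be bounded and your ``exact identity'' cannot isolate an $O(|a|^{2j})$ leading term. Your acknowledged obstacle---controlling $\varphi_N^a-\varphi_N^0$ at all intermediate scales with error $o(|a|^{2j})$---is precisely the place where the paper substitutes the two-sided Rayleigh-quotient machinery plus the $dF$-argument for a direct identity.

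On the sign statements, your argument for (ii) is essentially the paper's (on a nodal direction $\psi_j\equiv0$ on $\Gamma_p$, so $0\in\mathcal K_p$ with $J_p(0)=0$, while the nonzero normal derivative of $\psi_j$ makes the linear part of $J_p$ nontrivial, forcing $\mathfrak m_p<0$). For (iii) the paper's argument is much simpler than the symmetry/comparison you sketch: on a bisector $\partial^\pm\psi_j/\partial\nu_p^\pm\equiv0$, so $J_p(u)=\tfrac12\|u\|_{\mathcal H_p}^2$, and since $\psi_j\not\equiv0$ on $\Gamma_p$ one has $0\notin\mathcal K_p$, whence $\mathfrak m_p>0$ immediately.
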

The sign properties of $\mathfrak{c}_p$ imply in particular that, as $|a|$ is sufficiently small,
\begin{align*}
&\lambda_N-\lambda_N^a>0 \textrm{ if $a$ is tangent to a nodal line of
  $\varphi_N$ in $0$},\\
&\lambda_N-\lambda_N^a<0 \textrm{ if $a$ lies in the middle of the tangents to two nodal lines of $\varphi_N$ in $0$},
\end{align*}
see Figure \ref{fig:2}, in agreement with the preexisting results
\eqref{eq:4} and \eqref{eq:5}. This fact, together with the continuity
property of $\mathfrak{c}_p$, implies that $\mathfrak{c}_p$ vanishes
at least two times between two nodal lines of $\varphi_N$ in $0$,
and then $\lambda_N-\lambda_N^a=o(|a|^{2j})$ as $a\to 0$ straightly at least
along $2(j-1)$ directions.

\subsection{Variational characterization of the function $p\mapsto
  {\mathfrak c}_p$ and of the limit profile}\label{sec:vari-char-funct}
  
Our second main result is a variational characterization of the function $p\mapsto
  {\mathfrak c}_p$ appearing in Theorem \ref{t:main_straight1}, for
  which the following additional notation is needed.

Let us fix
$\alpha\in\big(-\frac\pi2,\frac\pi2\big)$ and
$p=(\cos\alpha,\sin\alpha)\in {\mathbb S}^1_+$.
We
denote by $\Gamma_p$ the segment joining $0$ to $p$, that is to say
\begin{equation*}
\Gamma_p=\{(r\cos \alpha, r\sin \alpha): r \in
(0,1)\},
\end{equation*}
and define the space $\mathcal{H}_p$ as the completion of 
\[
 \left\{ u\in H^1(\R^2_+\setminus\Gamma_p)  : u=0 \text{ on
   }\partial\R^2_+\text{ and } 
 u=0 \text{ in a neighborhood of }\infty \right\}
\]
with respect to the Dirichlet norm 
\begin{equation}\label{eq:norm_Hp}
\|u\|_{\mathcal H_p}:=\|
\nabla u \|_{L^2(\R^2_+\setminus\Gamma_p)}.
\end{equation} 
From the
Hardy-type inequality for magnetic Sobolev spaces proved in \cite{LW99} (see \eqref{eq:hardy}) and
a change of gauge, it follows that functions in $\mathcal{H}_p$ also
satisfy a Hardy-type inequality, so that $\mathcal{H}_p$ can be
characterized as 
\[
\mathcal{H}_p=\Big\{ u\in L^1_{\rm loc}(\R^2_+):
 \nabla_{\R^2_+\setminus\Gamma_p} u\in L^2(\R^2_+), 
\ \tfrac{u}{|x|}\in L^2(\R^2_+), \text{ and } u=0 \text{
  on }\partial\R^2_+\Big\},
\]
where $\nabla_{\R^2_+\setminus\Gamma_p}u$ denotes the distributional gradient of $u$ in $\R_+^2\setminus \Gamma_p$.

The functions in $\mathcal{H}_p$ may clearly be discontinuous on $\Gamma_p$. 
For this reason, we introduce two trace operators. 
Let us consider the sets $U^+_p=\{(x_1,x_2)\in \R^2_+:x_2>x_1
\tan \alpha\}\cap D_1^+$ and 
$U^-_p=\{(x_1,x_2)\in \R^2_+:x_2<x_1 \tan \alpha\}\cap D_1^+$.
First, for any function $u$ defined in a neighborhood of $U_p^+$,
respectively $U_p^-$, we define the restriction
\begin{equation} \label{eq:Rp}
\mathcal{R}_p^+ (u) = u|_{U^+_p}, \quad \text{respectively} \quad
\mathcal{R}^-_p(u) = u|_{U^-_p}.
\end{equation} 
We observe that, since $\mathcal{R}_p^\pm$ maps $\mathcal{H}_p$ into
$H^1(U^\pm_p)$ continuously,
the trace operators 
\begin{align} \label{eq:traces}
 \gamma_p^{\pm} : \quad &\mathcal{H}_p \longrightarrow H^{1/2}(\Gamma_p) , \,\, \,  u \longmapsto \gamma_p^{\pm}(u) := \mathcal{R}^{\pm}_p(u)|_{\Gamma_p}
\end{align}
are well defined and  continuous from $\mathcal{H}_p$ to $H^{1/2}(\Gamma_p)$.
Furthermore, by Poincaré and Sobolev trace
inequalities, it is easy to verify that the operator norm of  $\gamma_p^{\pm}$ is bounded uniformly
with respect to $p\in{\mathbb S}^1_+$, in the sense that there exists
a constant $L>0$ independent of $p$ such that, recalling \eqref{eq:norm_Hp},
\begin{equation}\label{eq:11}
  \|\gamma_p^{\pm} (u)\|_{H^{1/2}(\Gamma_p)}\leq
  L\|u\|_{\mathcal{H}_p}\quad\text{for all }u\in \mathcal{H}_p.
\end{equation} 
Clearly, for a continuous function $u$, $\gamma_p^+(u) = \gamma_p^-(u)$. 

We will give a variational characterization of the limit of the
quotient \eqref{eq:8} by relating it to the minimum of the functional $J_{p}: \mathcal H_p \to\R$ defined as
\begin{equation} \label{eq:Jbis}
J_{p}(u) =  \frac{1}{2} \int_{\R^2_+\setminus\Gamma_p} |\nabla u|^2
\,dx + j \cos \left( j \left( \tfrac{\pi}{2} - \alpha \right) \right)
\int_{ \Gamma_p}  |x|^{j-1} (\gamma_p^+(u)-\gamma_p^-(u)) \,ds
\end{equation}
on the set  
\begin{equation} \label{eq:spaceKappap}
\mathcal K_p :=\{u\in \mathcal H_p : \ \gamma_p^+(u + \psi_j) + \gamma_p^-(u + \psi_j) = 0 \}.
\end{equation}
The following theorem relates the value ${\mathfrak
  c}_p$ appearing in the limit \eqref{eq:38} with the minimum of $J_p$
over $\mathcal K_p$.

\begin{theorem}\label{t:main_straight2}
The minimum of  $J_p$
over $\mathcal K_p$ is uniquely achieved at a function $w_{p}\in
\mathcal{K}_p$. Furthermore, letting
\begin{equation}\label{eq:m}
{\mathfrak m}_{p}:=\min_{u\in \mathcal K_p}J_{p}(u)=J_{p}(w_{p}),
\end{equation}
we have that 
\[
\mathfrak{c}_p=-2\mathfrak{m}_p,
\]
with $\mathfrak{c}_p$ being as in Theorem \ref{t:main_straight1}.
\end{theorem}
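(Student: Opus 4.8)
The plan is to establish Theorem \ref{t:main_straight2} by matching the variational characterization of $\mathfrak{m}_p$ with an independently derived expansion of $\lambda_N - \lambda_N^a$ whose leading coefficient is $\mathfrak{c}_p$. The starting point is the blow-up analysis that produces the limit \eqref{eq:38}: as $a = |a|p \to 0$, one rescales the eigenfunction $\varphi_N^a$ near the pole at scale $|a|$, gauges away the singularity, and shows that the rescaled functions converge (in a suitable weighted $H^1$-type space, after subtracting the profile $\beta|a|^j\psi_j$) to $\beta$ times a limit profile $w_p \in \mathcal{H}_p$. One should first verify that $J_p$ is well defined on $\mathcal{H}_p$: the boundary integral over $\Gamma_p$ makes sense because $|x|^{j-1} \in L^2(\Gamma_p)$ for $j \geq 1$ and the trace jump $\gamma_p^+(u) - \gamma_p^-(u)$ lies in $H^{1/2}(\Gamma_p) \subset L^2(\Gamma_p)$, with the uniform bound \eqref{eq:11} controlling it by $\|u\|_{\mathcal{H}_p}$. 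Since $J_p$ is a continuous quadratic-plus-linear functional on $\mathcal{H}_p$ and $\mathcal{K}_p$ is a (closed) affine subspace — note $\mathcal{K}_p$ is affine rather than linear: $u \mapsto \gamma_p^+(u+\psi_j) + \gamma_p^-(u+\psi_j)$ is affine with the translate $-\gamma_p^+(\psi_j) - \gamma_p^-(\psi_j)$, and one checks this functional is surjective onto (a suitable subspace of) $H^{1/2}(\Gamma_p)$ so $\mathcal{K}_p$ is nonempty — strict convexity of the quadratic part combined with coercivity on $\mathcal{H}_p$ (via the Hardy inequality characterization) gives existence and uniqueness of the minimizer $w_p$ by the direct method. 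Coercivity requires controlling the linear term: $|j\cos(j(\tfrac\pi2-\alpha))\int_{\Gamma_p}|x|^{j-1}(\gamma_p^+(u)-\gamma_p^-(u))\,ds| \leq C\|u\|_{\mathcal{H}_p}$, which together with $\frac12\|\nabla u\|^2$ dominating keeps $J_p$ bounded below on any affine subspace and makes minimizing sequences bounded.

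The core of the argument is to show $\mathfrak{c}_p = -2\mathfrak{m}_p$, and I would do this in two matching inequalities. For the upper bound on $\lambda_N^a$ (giving $\mathfrak{c}_p \leq -2\mathfrak{m}_p$, i.e. $\lambda_N - \lambda_N^a \gtrsim 2|\beta|^2 \mathfrak{m}_p |a|^{2j}$), one uses the minimizer $w_p$ to build a test function for the Rayleigh quotient of $(E_a)$: take $\varphi_N$, gauge-transform it to the magnetic setting near the pole, and add a correction of size $|a|^j$ built by rescaling $w_p$ back to scale $|a|$ and cutting off. The constraint defining $\mathcal{K}_p$ is precisely the gauge/matching condition ensuring this glued function lies in $H^{1,a}_0(\Omega,\C)$ — the jump of the Aharonov-Bohm phase across the segment joining $a$ to the boundary corresponds, after gauge transformation, to the sign change encoded in $\gamma_p^+(u+\psi_j) + \gamma_p^-(u+\psi_j) = 0$. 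Expanding the magnetic Rayleigh quotient of this test function and carefully tracking the $|a|^{2j}$-order terms, one recognizes exactly $2 J_p(w_p) = 2\mathfrak{m}_p$ as the coefficient, using $\int q|\varphi_N|^2 = 1$ and the normalization \eqref{eq:9}, plus the asymptotics \eqref{eq:asyphi0} for the boundary terms along $\Gamma_p$.

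For the reverse inequality $\mathfrak{c}_p \geq -2\mathfrak{m}_p$, one uses the genuine eigenfunction $\varphi_N^a$: after gauging and rescaling by $|a|^{-j}$, one extracts from the blow-up convergence a limit $W_p$, shows $\beta^{-1}W_p \in \mathcal{K}_p$ (the matching/constraint passes to the limit because $\varphi_N^a \in H^{1,a}_0$), and then — this is the delicate point — identifies the rescaled energy difference $(\lambda_N - \lambda_N^a)|a|^{-2j}$ with $-2|\beta|^2 J_p(\beta^{-1}W_p) + o(1) \geq -2|\beta|^2 \mathfrak{m}_p + o(1)$. The passage to the limit in the energy requires a careful Pohozaev/Rellich-type identity or an Almgren-monotonicity argument (as advertised in the abstract) to guarantee that no energy escapes to infinity or concentrates at the origin, so that the lower semicontinuity of $\|\nabla \cdot\|^2$ and the strong convergence of the boundary traces along $\Gamma_p$ combine to give the clean identification rather than merely an inequality in the wrong direction. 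Finally, one shows this limit $W_p$ actually equals $\beta w_p$ by the uniqueness in the minimization, closing the loop and simultaneously giving the uniqueness of the limit profile.

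The main obstacle I expect is the second (lower-bound) inequality: proving that the rescaled eigenfunctions converge strongly enough in $\mathcal{H}_p$ — and not just weakly — so that the $|a|^{2j}$-order term in the energy expansion converges exactly to $2|\beta|^2\mathfrak{m}_p$. This is where the Almgren-type monotonicity formula for the magnetic operator, together with doubling estimates and the sharp asymptotics \eqref{eq:asyphi0}, must be invoked to rule out loss of compactness and to control the contribution of the region far from the pole; absorbing the weight $q(x)$ (which is only $C^1$) and handling the mismatch between the fixed domain $\Omega$ and the model domain $\R^2_+$ under blow-up add technical layers, but they are routine compared to the monotonicity argument.
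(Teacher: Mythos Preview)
Your treatment of existence and uniqueness of the minimizer $w_p$ is essentially that of the paper (Lemma~\ref{lemma:mp}): coercivity of $J_p$ via the trace bound \eqref{eq:11}, closedness and convexity of the affine set $\mathcal K_p$, and strict convexity of the quadratic part. The overall sandwich strategy --- one inequality from test functions for $\lambda_N^a$, the other from test functions for $\lambda_N$, combined with a blow-up of $\varphi_N^a$ --- also matches the paper's architecture (Sections~\ref{sec:upper}--\ref{sec:end}). Your reading of the constraint defining $\mathcal K_p$ as the gauge/matching condition is exactly right; see the proof of Proposition~\ref{prop_tildePsi}.

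Where your plan diverges from the paper is in \emph{how} the two limits are tied to $\mathfrak m_p$. You propose to plug the minimizer $w_p$ directly into a test function for $(E_a)$ and, in the other direction, to recognize the rescaled energy difference as $-2|\beta|^2 J_p(\cdot)$ evaluated at the blow-up limit and then invoke minimality. The paper does neither. Instead it introduces an intermediate scale $R$: both test families are built by harmonic extension in $D_{R|a|}^+$ (the functions $u_R$ and $z_R$ of \eqref{eq:equazione_w_R}, \eqref{eq:zR}), giving boundary integrals $\tilde\kappa_R$, $\kappa_R$ (Lemmas~\ref{l:stima_Lambda0_sotto}, \ref{l:limitfRa}). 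The crucial bridge to $\mathfrak m_p$ is Proposition~\ref{prop:relationmp}, which identifies $\mathfrak m_p$ with a Fourier coefficient $-j\,\omega_p(1)$ of $w_p$; this is what makes $\tilde\kappa_R\to 2i\mathfrak m_p$ and $\kappa_R\to -2i\mathfrak m_p$ computable. Minimality of $J_p$ enters the paper only through the \emph{uniqueness} of $\Psi_p$ (Proposition~\ref{prop_tildePsi}(ii)), used in Theorem~\ref{theo:exact_convergence} to pin down the blow-up limit --- not as an inequality in the energy comparison.

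The concrete gap in your Direction~2 is the claimed identity ``$(\lambda_N-\lambda_N^a)|a|^{-2j}=-2|\beta|^2 J_p(\beta^{-1}W_p)+o(1)$''. The eigenvalue difference is a global quantity and is \emph{not} a priori localized near the pole; the Courant--Fisher argument only gives a one-sided bound, and the resulting localized quantity $\int_{D_R^+}|(i\nabla+A_0)z_R|^2-\int_{D_R^+}|(i\nabla+A_p)\Psi_p|^2$ is not obviously $-2J_p$ of a competitor in $\mathcal K_p$ (the harmonic extension $z_R$ is not built from $w_p$). To close this you would need either an explicit computation along the paper's lines or a new identity; the minimality inequality by itself does not produce the matching constant. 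A second ingredient you do not mention is the normalization step: the upper bound is first obtained in units of $H_a=H(\varphi_N^a,\bar K|a|)$, not $|a|^{2j}$, and the paper needs an implicit-function-theorem argument (Lemma~\ref{F_frechet}, Theorems~\ref{stima_teo_inversione}--\ref{conseguenza_stima_teo_inversione}) to show $|a|^{2j}=O(H_a)$ before the blow-up can be identified. Your sketch should account for this, since the blow-up limit and the sharp rate are established simultaneously rather than sequentially.
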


The proofs of Theorems \ref{t:main_straight1} and
\ref{t:main_straight2} rely on the exact determination of the limit of
a suitable blow-up sequence of the eigenfunctions $\varphi_N^a$, in
the spirit of \cite{abatangelo2015leading,abatangelo2015sharp}.  We
emphasize that the boundary case presents some significant additional
difficulties, due to lack of local symmetry and unavailability of
regularity results of the function $a\mapsto \lambda_N^a$ up to the
boundary. The overcoming of these difficulties requires a nontrivial
adaptation of the techniques developed in
\cite{abatangelo2015leading,abatangelo2015sharp} for interior
poles. Being this blow-up result of independent interest, it is
worthwhile to be stated precisely. To this aim, let us define, for
every $\alpha\in[0,2\pi)$ and
$b=(b_1,b_2)=|b|(\cos\alpha,\sin\alpha)\in \R^2\setminus\{0\}$,
\begin{equation*}
\theta_b:\R^2\setminus\{b\}\to [\alpha,\alpha+2\pi)
\quad\text{and}\quad
\theta_0^b:\R^2\setminus\{0\}\to [\alpha,\alpha+2\pi)
\end{equation*}
such that 
\begin{equation}\label{eq:angles}
  \begin{split}
&\theta_b(b+r(\cos t,\sin t))=
  t\quad \text{for all }r>0\text{ and }t\in  [\alpha,\alpha+2\pi),\\
&\theta_0^b(r(\cos t,\sin t))=
  t\quad \text{for all }r>0\text{ and }t\in  [\alpha,\alpha+2\pi). 
\end{split}
\end{equation}
We observe that the difference function $\theta_0^b - \theta_b$ is
regular except for the segment $\{ tb:\ t\in [0,1] \}$.
Moreover, we also define $\theta_0:\R^2\setminus\{0\}\to [0,2\pi)$ as 
\[
\theta_0(\cos t,\sin t) =t\quad\text{for all }
t\in[0,2\pi).
\]
For $a\in \Omega$, let $\varphi_N^a \in H^{1,a}_0(\Omega, \C)$ be an
eigenfunction of \eqref{eq:eige_equation_a} related to the weighted
eigenvalue $\lambda_N^a$, i.e.\ solving
\begin{equation}\label{eq:equation_a}
 \begin{cases}
   (i\nabla + A_a)^2 \varphi_N^a = \lambda_N^a q(x) \,\varphi_N^a,  &\text{in }\Omega,\\
   \varphi_N^a = 0, &\text{on }\partial \Omega,
 \end{cases}
\end{equation} 
and satisfying the normalization conditions 
\begin{equation} \label{eq:6}
  \int_\Omega q(x)|\varphi_N^a(x)|^2\,dx=1 \quad\text{and}\quad 
  \int_\Omega e^{\frac
    i2(\theta_0^a-\theta_a)(x)}q(x)\varphi_N^a(x)\overline{\varphi_N^0(x)}\,dx\in
\R^+.
\end{equation}
The following theorem gives us the behavior of the eigenfunction
$\varphi_N^a$ for $a$ close to the boundary point $0$; more precisely,
it shows that  a homogeneous scaling of order $j$ of $\varphi_N^a$  
along a fixed direction associated to $p\in {\mathbb S}^1_+$ converges
to the limit profile $\Psi_p \in \bigcup_{r>1} H^{1 ,{p}}(D_r^+,\C)$ given by 
\begin{equation}\label{eq:Psip}
 \Psi_{p}:=e^{\frac i2(\theta_{p}-\theta_0^{p} + \tilde \theta_0)}(w_{p}+\psi_j),
\end{equation}
with $w_p$ as in \eqref{eq:m} and $\psi_j$ as in \eqref{eq:psi_j}.

\begin{theorem}\label{thm:blow_up_varphiN}
Let
$\Omega\subset\R^2$ be a bounded, open and simply connected
domain of class $C^{2,\gamma}$ for some $0<\gamma<1$, such that
$0\in\partial\Omega$ and  \eqref{eq:rectifiedbdd} holds. Let $q$
satisfy \eqref{eq:q_assumptions},
 $N\geq 1$ be such that 
 \eqref{eq:simple_eigenv_n0} holds, and 
$j\in\N\setminus\{0\}$ be  the order of vanishing of
a $N$-th eigenfunction $\varphi_N^0$ of $(E_0)$ satisfying
\eqref{eq:9}.
Let 
$\varphi_N^a \in H^{1,a}_0(\Omega, \C)$ solve \eqref{eq:equation_a}--\eqref{eq:6}.
Then, for every $p\in {\mathbb S}^1_+$, 
\[
\frac{\varphi_N^a(|a|x)}{|a|^j} \to \beta \Psi_p \quad \text{ as } a=|a|p \to 0,
\]
in $H^{1,p}(D_R^+, \C)$ for every $R > 1$,  almost everywhere in
$\R^2_+$ and in $C^2_{\rm loc}(\overline{\R^2_+}\setminus\{p\},\C)$, with $\beta\neq0$ as in \eqref{eq:asyphi0}.
\end{theorem}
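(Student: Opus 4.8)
\textbf{Proof proposal for Theorem \ref{thm:blow_up_varphiN}.}

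The plan is to implement an Almgren-type blow-up analysis for the scaled eigenfunctions, following the strategy of \cite{abatangelo2015leading,abatangelo2015sharp} but adapted to the half-space/boundary geometry. First I would perform a gauge change: set $\widetilde\varphi_N^a = e^{-\frac i2(\theta_0^a-\theta_a)}\varphi_N^a$, so that $\widetilde\varphi_N^a$ solves the same magnetic eigenvalue equation with the pole-dependent phase stripped to a pole at the origin of circulation $1/2$, vanishing on $\partial\R^2_+\cap D_{\bar R}$ and satisfying a Dirichlet condition along the segment $\{tp:\ t\in[0,|a|]\}$. Introduce the blow-up family $v_a(x) := |a|^{-j}\widetilde\varphi_N^a(|a|x)$ on $D_{\bar R/|a|}^+$; then $v_a$ solves $(i\nabla+A_p)^2 v_a = |a|^2\lambda_N^a\, q(|a|x)v_a$ in $D_R^+$ for $|a|$ small.

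The core step is a uniform $H^{1,p}(D_R^+)$ bound on $v_a$. Here I would combine: (a) the a priori energy estimate $\lambda_N-\lambda_N^a \geq -C|a|^{2j}$ coming from \eqref{eq:4}-type comparison arguments (taking a suitable test function supported away from the pole), which controls the Rayleigh quotient; (b) an Almgren frequency function $N(r) = \frac{r\int_{D_r^+}|(i\nabla+A_p)\widetilde\varphi_N^a|^2}{\int_{\partial D_r^+}|\widetilde\varphi_N^a|^2}$ associated to the equation, whose almost-monotonicity (up to the lower-order eigenvalue term and the conformal weight $q$) forces $N(r)\to j$ as the pole collapses, because the limiting homogeneity is dictated by \eqref{eq:asyphi0}-\eqref{eq:psi_j}; (c) the doubling inequalities that monotonicity yields, giving $\int_{\partial D_r^+}|\widetilde\varphi_N^a|^2 \sim |a|^{2j}r^{2j+1}$ and hence the uniform bound on $v_a$. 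The Hardy inequality \eqref{eq:hardy_R2} then upgrades the $L^2$ control of $v_a/|x-p|$. Extracting a weak limit $v_a \rightharpoonup V$ in $H^{1,p}_{\rm loc}$, elliptic regularity away from $p$ (and near $\partial\R^2_+$) gives $C^2_{\rm loc}(\overline{\R^2_+}\setminus\{p\})$ convergence; passing to the limit in the equation, using $|a|^2\lambda_N^a\to 0$ and $q(|a|x)\to q(0)$, shows $(i\nabla+A_p)^2 V = 0$ in $\R^2_+$, with $V=0$ on $\partial\R^2_+$, a matching condition across $\Gamma_p$ encoded by the normalization \eqref{eq:6}, and growth $|V| = O(|x|^j)$ at infinity.

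Next I would identify $V$ with $\beta\Psi_p$. Undoing the gauge, $e^{-\frac i2(\theta_p-\theta_0^p+\tilde\theta_0)}V =: W$ satisfies $\Delta W = 0$ on $\R^2_+\setminus\Gamma_p$, $W=0$ on $\partial\R^2_+$, the antisymmetry $\gamma_p^+(W)+\gamma_p^-(W)=0$ inherited from the half-integer circulation together with the continuity of $V$, and the jump relation of the normal derivative across $\Gamma_p$ coming from the distributional equation for $V$; writing $W = \beta(w+\psi_j)$ with $\psi_j$ absorbing the degree-$j$ growth, one checks that $w$ is precisely the weak Euler--Lagrange solution of the minimization of $J_p$ over $\mathcal K_p$, hence $w = w_p$ by the uniqueness in Theorem \ref{t:main_straight2} (which I may invoke). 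The constant $\beta$ is fixed by the asymptotics \eqref{eq:asyphi0}: matching the leading term $\beta|x|^j\psi_j(x/|x|)$ at infinity pins down the amplitude, and the second normalization in \eqref{eq:6} fixes the phase so that $\beta$ is exactly the one of \eqref{eq:asyphi0}. Finally, strong $H^{1,p}(D_R^+)$ convergence follows from convergence of the Dirichlet-type energies: the upper bound is the weak limit inequality, and the lower bound (no energy loss) comes from testing the equation for $v_a$ against $v_a$ itself and using the boundary convergence provided by the doubling estimates, so that $\|v_a\|_{\mathcal H}^2 \to \|V\|_{\mathcal H}^2$.

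The main obstacle I expect is establishing the uniform energy bound and the $N(r)\to j$ limit in the \emph{absence} of the regularity of $a\mapsto\lambda_N^a$ up to the boundary and without the local reflection symmetry available for interior poles: the Almgren monotonicity formula must be set up on the half-ball $D_r^+$ with the mixed Dirichlet condition on $\partial\R^2_+$ and a moving crack $\Gamma_p$, and the Pohozaev-type identity underlying monotonicity acquires boundary terms on $\partial\R^2_+\cap D_r$ that have to be shown to have a favorable sign (using \eqref{eq:rectifiedbdd} so that this portion of the boundary is flat). Controlling the remainder terms produced by the $C^1$ weight $q$ and by the eigenvalue term $|a|^2\lambda_N^a$ uniformly in $|a|$, and ruling out vanishing of the denominator $\int_{\partial D_r^+}|\widetilde\varphi_N^a|^2$ along the blow-up, is the delicate technical heart of the argument.
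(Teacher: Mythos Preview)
Your overall architecture (blow-up, Almgren, identification via uniqueness of $\Psi_p$) matches the paper's, but there is a genuine gap at the heart of your step (c). Almgren monotonicity on the half-balls gives only an \emph{upper} bound on the frequency, namely $\mathcal N(\varphi_N^a,r,\lambda_N^a,A_a)\le j+\delta$ for $r\in[\bar K|a|,\bar r]$ (this is exactly Lemma~\ref{l:stimaupN} in the paper). Integrating $H'/H=2\mathcal N/r$ then yields the one-sided estimate $H(\varphi_N^a,\bar K|a|)\gtrsim |a|^{2(j+\delta)}$, not the two-sided relation $\int_{\partial D_r^+}|\varphi_N^a|^2\sim |a|^{2j}r^{2j+1}$ that you claim. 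The missing lower bound $\mathcal N\ge j-\delta$ is precisely what Almgren \emph{cannot} provide here: the pole at $a$ destroys monotonicity below radius $\sim|a|$, and above that radius you only inherit the value $\approx j$ at the outer endpoint $r=\bar r$, which propagates inward as an upper bound. Consequently your directly normalized sequence $v_a=|a|^{-j}\varphi_N^a(|a|\cdot)$ is not known to be bounded in $H^{1,p}(D_R^+)$, and the whole scheme stalls.

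The paper circumvents this by a two-stage normalization that you should adopt. First one works with the auxiliary blow-up $\tilde\varphi_a=\varphi_N^a(|a|\cdot)/\sqrt{H_a}$, where $H_a=H(\varphi_N^a,\bar K|a|)$; this is bounded in $H^{1,p}(D_R^+)$ automatically (see \eqref{eq:67}), without any sharp information on $H_a$. The sharp estimate $|a|^{2j}=O(H_a)$ is then obtained \emph{not} from Almgren but from an implicit-function-theorem argument (Lemma~\ref{F_frechet} and Theorem~\ref{stima_teo_inversione}): invertibility of $dF(\lambda_N,\varphi_N^0)$ forces $\|v_{N,R,a}-\varphi_N^0\|_{H^{1,0}_0}=O(\sqrt{H_a})$, and scaling this back (equation \eqref{eq:scaling}) gives $|a|^j/\sqrt{H_a}=O(1)$. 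Only at that point can one pass to the $|a|^{-j}$ normalization and conclude as in Theorem~\ref{theo:exact_convergence}. This same estimate is also what supplies, after passing to the limit, the finite-energy condition \eqref{eq:Psip3} needed to invoke the uniqueness part of Proposition~\ref{prop_tildePsi}; your growth bound $|V|=O(|x|^j)$ alone would not suffice to identify the limit, since it does not rule out lower-order harmonic components in $V-\beta e^{\frac i2(\theta_p-\theta_0^p+\tilde\theta_0)}\psi_j$.
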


We notice that the rate of the convergences 
in Theorems \ref{t:main_straight1} and \ref{thm:blow_up_varphiN} is related to the nodal properties 
of the limit eigenfunction, see \eqref{eq:asyphi0}, 
as already highlighted in \cite{abatangelo2015sharp,bonnaillie2014eigenvalues, noris2015aharonov}.
From the results in \cite[Theorem 1.4]{FF2013} we know that the asymptotic behavior in \eqref{eq:asyphi0} 
is in turn related to the so-called Almgren quotient (for a precise definition see \S \ref{sec:mono}). More precisely, 
\begin{equation} \label{eq:NvarphiN0}
\lim_{r \to 0^+} \frac{r\int_{D_r^+} \left( |(i \nabla + A_0) \varphi_N^0|^2 - \lambda_N q(x) |\varphi_N^0|^2 \right) \, dx}{\int_{\partial D_r^+} |\varphi_N^0|^2 \, ds} = j.
\end{equation}

\subsection{Organization of the paper and main ideas}
In \S \ref{sec:limitprofile} we treat the variational characterization of the limit profile described above.
This extends the one obtained in \cite[Proposition 1.6]{noris2015aharonov} 
for the case $j=1$ 
and the one constructed in \cite[Proposition 4.2]{abatangelo2015sharp} for a general $j$ 
when the pole $a$ approaches a fixed point (which in this case lays in
the interior of the domain) tangentially to a nodal line of the limit eigenfunction. 

On one hand, the case $j=1$ is considerably easier because the growth
at infinite of the limit profile is the least possible: this allows
characterizing immediately the limit profile through its Almgren frequency, since the $\liminf$ and the $\limsup$
of the Almgren quotient at infinity are the same.
On the other hand, the construction presented in \cite{abatangelo2015sharp} holds for general $j$, 
but only for $a$ moving  tangentially to a nodal line of the limit
eigenfunction: this restriction 
forces the limit profile to vanish on a half-line, so
that the authors 
are able to construct the limit profile first on a half-plane solving
a minimization problem, then
reflecting and multiplying by a suitable
phase jumping on the half-line. 
Finally, we remark that the sharp estimates obtained in 
\cite{abatangelo2015leading} for $a$ approaching an interior point along a general direction 
don't make use of an explicit construction of the limit profile: in that
case, the
sharp estimate on nodal lines is enough to compute the leading term of
the Taylor expansion of the eigenvalue variation, thanks to symmetry
and periodicity properties of the Fourier coefficients of the limit profile with respect to the
direction.

In the present paper we are dealing with general $j$ as $a$ approaches
a boundary point along a general direction (not even perpendicular to the
boundary of $\Omega$), so that we cannot take advantage of any
remarkable bound for the Almgren quotient nor of any symmetry
property.  
This requires a  completely new approach, based on the construction of
the limit profile by solving an elliptic crack problem 
 prescribing the jump of the
solution along the crack $\Gamma_p$, rather than its value, see
\eqref{eq:w1}--\eqref{eq:w3}.

In \S \ref{sec:mp} we describe the properties of the function
$\mathfrak{m}_p$ defined in \eqref{eq:m}. 

Next we turn to study a suitable blow-up of the eigenfunctions $\varphi_N^a$. Due to the difficulties in proving a priori energy bounds for 
the blow-up sequence
\begin{equation}\label{eq:blow_up_introduction}
\frac{\varphi_N^a(|a|x)}{|a|^j}, 
\end{equation} 
we introduce the following auxiliary blow-up sequence
\begin{equation}\label{eq:blow_up_intermediate}
\tilde{\varphi}_a(x)=
\sqrt{\frac{\bar{K}|a|}{\int_{\partial D_{\bar{K}|a|}} |\varphi_N^a|^2\,ds }} \varphi_N^a(|a|x),
\end{equation} 
for a suitable $\bar{K}>0$. In \S \ref{sec:mono} we take
advantage of the Almgren's frequency function to obtain a priori
bounds on \eqref{eq:blow_up_intermediate}, see \eqref{eq:67}. We
recall that the frequency function in the context of magnetic
operators was first introduced in
\cite{kurata1997} for magnetic potentials in the Kato class and then
extended to Aharonov-Bohm type potentials in \cite{FFT2011}.

\S \ref{sec:upper} and \S \ref{sec:lower} provide
  preliminary upper and lower bounds for the difference
  $\lambda_N-\lambda_N^a$, which are then summarized in Corollary
  \ref{cor:preliminare_sottosopra}. These preliminary estimates are
  obtained by considering suitable competitor functions, and by
  plugging them into the Courant-Fisher minimax characterization of
  eigenvalues. More precisely, to obtain an upper bound for
  $\lambda_N-\lambda_N^a$ we use the Rayleigh quotient for
  $\lambda_N$, and to get a lower bound for $\lambda_N-\lambda_N^a$ we
  use the Rayleigh quotient for $\lambda_N^a$. 

At this first stage, the estimate from above of
$\lambda_N-\lambda_N^a$ is given in terms of the normalization factor
appearing in \eqref{eq:blow_up_intermediate}; in order to  determine the exact asymptotic
behavior of such normalization term, in \S
\ref{sec:energyestimates} we obtain some
energy estimates of the difference between approximating
and limit eigenfunctions after blow-up, exploiting the invertibility of
the differential of the function $F$ defined in
\eqref{def_operatore_F}. As a consequence, in \S \ref{sec:blowup}
we succeed in proving that 
\[
|a|^{-2j-1}\int_{\partial D_{\bar{K}|a|}} |\varphi_N^a|^2\,ds
\]
tends to a positive finite limit depending on $p\in{\mathbb S}^1_+$ as
$a=|a|p\to 0$, and in turn the equivalence of the two blow-up sequences \eqref{eq:blow_up_introduction} and \eqref{eq:blow_up_intermediate}.
This allows us to conclude the proofs of Theorem \ref{thm:blow_up_varphiN} in \S \ref{sec:blowup} and those of Theorems \ref{t:main_straight1}, \ref{t:main_straight2} in \S \ref{sec:end}.

Finally, in the appendix, we recall a Hardy-type
inequality for Aharonov-Bohm operators and some Poincaré-type
inequalities used throughout the paper.

\subsection{Notation} 
\begin{itemize}
\item For  $r>0$ and $a\in\R^2$, $D_r(a)=\{x\in\R^2:|x-a|<r\}$
denotes the disk of center $a$ and radius $r$.
\item For all $r>0$, 
$D_r=D_r(0)$ denotes the disk of center $0$ and
radius $r$.
\item $\R^2_+=\{(x_1,x_2)\in\R^2: x_1>0\}$ and $\R^2_- = \{ (x_1,x_2) \in \R^2 : x_1 < 0\}$.
\item For all $r>0$, $D_r^+=D_r\cap \R^2_+$  denotes the right half-disk of center $0$ and
radius $r$.
% $D_r^+(a)=D_r(a)\cap \R^2_+$
\item For $f\in L^\infty(\Omega)$, $\|f\|_\infty=\|f\|_{L^\infty(\Omega)}$.
\end{itemize}

\section{Limit profile} \label{sec:limitprofile}

Keeping in mind the definitions of $\mathcal{R}_p^{\pm}$ \eqref{eq:Rp}
and of $\gamma_p^{\pm}$ \eqref{eq:traces} given in the \S \ref{sec:vari-char-funct},
we introduce the following further notation. For
$p=(\cos\alpha,\sin\alpha)\in {\mathbb S}^1_+$, let
\[
\nu^+_p = (\sin \alpha, -\cos \alpha) \quad\text{and}\quad \nu^-_p = - \nu_p^+
\]
be the normal unit vectors to $\Gamma_p$.
For every $u\in C^1(D_1^+ \setminus \Gamma_p)$ with 
$\mathcal{R}_p^+(u)\in C^1(\overline{U_p^+})$
and $\mathcal{R}_p^-(u)\in C^1(\overline{U_p^-})$, we
define the normal derivatives $\frac{\partial^\pm u}{\partial
  \nu_p^\pm}$ on $\Gamma_p$ respectively as 
\[
\frac{\partial^+ u}{\partial \nu_p^+} := \nabla \mathcal{R}_p^+(u)
\cdot \nu_p^+\bigg|_{\Gamma_p}, \quad \text{ and } \quad
\frac{\partial^- u}{\partial \nu_p^-} := \nabla \mathcal{R}_p^-(u)
\cdot \nu_p^-\bigg|_{\Gamma_p}.
\]
For a function $u$ differentiable in a neighborhood $\Gamma_p$, we get
\begin{equation}\label{eq:normal_derivative_Gamma_regular}
\frac{\partial^+ u}{\partial \nu_p^+} = - \frac{\partial^- u}{\partial
  \nu_p^-}\quad\text{on }\Gamma_p.
\end{equation}
We remark that since $\psi_j$ is differentiable, it verifies
\eqref{eq:normal_derivative_Gamma_regular}, so that
\[
\frac{\partial^+ \psi_j}{\partial \nu_p^+}(r\cos\alpha,r\sin\alpha) = - \frac{\partial^- \psi_j}{\partial \nu_p^-}(r\cos\alpha,r\sin\alpha) = j r^{j-1} \cos \left( j  \left( \tfrac{\pi}{2} - \alpha \right) \right).
\]
Hence the functional $J_{p}: \mathcal H_p \to\R$ defined in \eqref{eq:Jbis} can be equivalently written as
\begin{align*} 
J_{p}(u) &=  \frac{1}{2} \int_{\R^2_+\setminus\Gamma_p} |\nabla u|^2 \,dx + \int_{ \Gamma_p} \frac{\partial^+ \psi_j}{\partial \nu_p^+} (\gamma_p^+(u)-\gamma_p^-(u)) \,ds \\
&=  \frac{1}{2} \int_{\R^2_+\setminus\Gamma_p} |\nabla u|^2 \,dx + \int_{ \Gamma_p} \gamma_p^+(u)\frac{\partial^+ \psi_j}{\partial \nu_p^+}\,ds
 + \int_{ \Gamma_p} \gamma_p^-(u)\frac{\partial^- \psi_j}{\partial \nu_p^-}\,ds.
\end{align*}
In the following lemma we prove that $J_p$ admits a unique minimum
point in the set ${\mathcal K_p}$ defined in \eqref{eq:spaceKappap}.

\begin{lemma}\label{lemma:mp}
The minimum $\mathfrak{m}_p=\min_{\mathcal K_p}J_{p}$ is uniquely achieved at a function $w_{p}\in \mathcal{K}_p$.
Furthermore, $w_{p}$ is the unique solution to the variational problem
\begin{equation}\label{eq:w_euler_lagrange}
  \begin{cases}
w_p\in\mathcal K_p,\\[5pt]
    {\displaystyle{\int_{\R^2_+\setminus\Gamma_p}}}\nabla w_{p}\cdot\nabla\varphi\,dx
    +2 {\displaystyle{\int_{\Gamma_p}}}\dfrac{\partial^+\psi_j}{\partial\nu_p^+}\gamma_p^+(\varphi)\,ds
    =0, \quad\text{for every } \varphi\in \mathcal{K}_p^0,
  \end{cases}
\end{equation}
where
\begin{equation} \label{eq:kappa0}
\mathcal K_p^0 :=\{u\in \mathcal H_p : \  \gamma_p^+(u)+\gamma_p^-(u)=0\}.
\end{equation}
\end{lemma}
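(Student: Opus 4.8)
The plan is to realize $J_p$ as a strictly convex, coercive functional on the affine space $\mathcal{K}_p$ and then identify its unique minimizer via the associated Euler--Lagrange condition. First I would observe that $\mathcal{K}_p$ is a non-empty closed affine subspace of $\mathcal{H}_p$: non-emptiness follows because $\psi_j \in \mathcal{H}_p$ restricted suitably (or rather, one checks $-\tfrac12 \psi_j$-type competitors, noting $\psi_j$ itself is continuous so $\gamma_p^+(\psi_j)=\gamma_p^-(\psi_j)$, hence $u=-\psi_j$ gives $\gamma_p^\pm(u+\psi_j)=0$ trivially — so $-\psi_j$ would work if it were in $\mathcal{H}_p$; more carefully one truncates to get an honest element of $\mathcal{H}_p$). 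Closedness and the affine structure follow from the continuity and linearity of the trace operators $\gamma_p^\pm$ established in \eqref{eq:traces}--\eqref{eq:11}. The underlying linear subspace is exactly $\mathcal{K}_p^0$ from \eqref{eq:kappa0}.

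Next I would analyze the functional itself. The quadratic part $\tfrac12\int_{\R^2_+\setminus\Gamma_p}|\nabla u|^2$ is, by the Hardy-type characterization of $\mathcal{H}_p$ recalled in \S\ref{sec:vari-char-funct}, equivalent to $\tfrac12\|u\|_{\mathcal{H}_p}^2$, hence coercive and strictly convex. The linear part $\int_{\Gamma_p} \tfrac{\partial^+\psi_j}{\partial\nu_p^+}(\gamma_p^+(u)-\gamma_p^-(u))\,ds$ is a continuous linear functional on $\mathcal{H}_p$: indeed $\tfrac{\partial^+\psi_j}{\partial\nu_p^+}(r\cos\alpha,r\sin\alpha)=jr^{j-1}\cos(j(\tfrac\pi2-\alpha))$ is bounded on $\Gamma_p$ (since $0<r<1$ and $j\ge1$), so by \eqref{eq:11} and the trace inequality it is bounded by $C\|u\|_{\mathcal{H}_p}$. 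Therefore $J_p$ is strictly convex, coercive, and weakly lower semicontinuous on the closed convex set $\mathcal{K}_p$, so it admits a unique minimizer $w_p\in\mathcal{K}_p$ by the direct method.

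Finally I would derive \eqref{eq:w_euler_lagrange}. Since $\mathcal{K}_p$ is affine with direction $\mathcal{K}_p^0$, for any $\varphi\in\mathcal{K}_p^0$ the curve $t\mapsto w_p+t\varphi$ stays in $\mathcal{K}_p$, and minimality gives $\tfrac{d}{dt}J_p(w_p+t\varphi)\big|_{t=0}=0$, which unwinds to $\int_{\R^2_+\setminus\Gamma_p}\nabla w_p\cdot\nabla\varphi\,dx + \int_{\Gamma_p}\tfrac{\partial^+\psi_j}{\partial\nu_p^+}(\gamma_p^+(\varphi)-\gamma_p^-(\varphi))\,ds=0$; using the defining relation $\gamma_p^-(\varphi)=-\gamma_p^+(\varphi)$ for $\varphi\in\mathcal{K}_p^0$, the boundary term becomes $2\int_{\Gamma_p}\tfrac{\partial^+\psi_j}{\partial\nu_p^+}\gamma_p^+(\varphi)\,ds$, which is precisely the stated identity. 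Conversely, strict convexity guarantees that any $w_p\in\mathcal{K}_p$ satisfying \eqref{eq:w_euler_lagrange} is the unique minimizer, so the variational problem characterizes $w_p$ uniquely.

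The main obstacle I anticipate is not the convexity/coercivity machinery, which is routine once the function-space framework is in place, but rather the careful verification that $\mathcal{K}_p$ is non-empty and that all trace terms are genuinely well-defined and continuous on the non-smooth domain $\R^2_+\setminus\Gamma_p$ — in particular handling the endpoint $0$ of the crack $\Gamma_p$, where $\psi_j$ and its normal derivative degenerate like $r^{j-1}$, and ensuring the linear functional remains bounded there uniformly. This is exactly where the Hardy-type inequality of \cite{LW99} and the uniform trace bound \eqref{eq:11} do the essential work.
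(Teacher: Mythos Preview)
Your proposal is correct and follows essentially the same route as the paper: coercivity of $J_p$ via the trace bound \eqref{eq:11}, closedness and convexity of $\mathcal K_p$, existence by the direct method, and the Euler--Lagrange equation obtained by varying in directions $\varphi\in\mathcal K_p^0$. The only cosmetic difference is that the paper proves uniqueness of the variational problem by subtracting two solutions and testing the resulting homogeneous equation against itself, whereas you invoke strict convexity---these are the same argument in different clothes.
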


\begin{proof}
From \eqref{eq:11} and the continuity of the embedding
$H^{1/2}(\Gamma_p)\hookrightarrow L^2(\Gamma_p)$, we have that there
exists $C>0$ independent of $p\in{\mathbb S}^1_+$ such that, for all
$u\in \mathcal H_p$,
\begin{align*}
\left| \int_{\Gamma_p} \frac{\partial^{\pm} \psi_j}{\partial
    \nu_p^{\pm}} \gamma_p^{\pm} (u) \,ds \right|
&=\left|j \cos \left( j \left( \tfrac{\pi}{2} - \alpha \right) \right)
 \int_{\Gamma_p} |x|^{j-1}\gamma_p^{\pm} (u) \,ds \right|\\
& \leq j  \int_{\Gamma_p} |\gamma_p^{\pm} (u)| \,ds\leq
 j\|\gamma_p^{\pm} (u)\|_{L^2(\Gamma_p)}
\leq C\|\gamma_p^{\pm} (u)\|_{H^{1/2}(\Gamma_p)}
\leq C L \|u\|_{\mathcal H_p}
\end{align*}
and then, from the elementary inequality $ab\leq\frac{a^2}{4\eps}+\eps
b^2$, we deduce that, for every
$\varepsilon>0$, there exists a constant $C_\varepsilon>0$ (depending
on $\eps$ but independent of $p$) such that, for every $u\in\mathcal{H}_p$,
\begin{equation}\label{eq:J_p_coercive}
\left| \int_{\Gamma_p} \frac{\partial^{\pm} \psi_j}{\partial
    \nu_p^{\pm}} \gamma_p^{\pm} (u) \,ds \right| \leq
\varepsilon \|u\|_{\mathcal H_p}^2+ C_{\varepsilon}.
\end{equation}
This implies that $J_p$ is coercive in $\mathcal H_p$. Furthermore
$\mathcal K_p$ is convex and closed by the continuity of the trace operators. Hence, via standard minimization methods, $J_p$ achieves its minimum over $\mathcal{K}_p$ at some function $w_{p} \in
\mathcal{K}_p$. The Euler-Lagrange equation for $w_{p}$ is \eqref{eq:w_euler_lagrange}.

In order to prove uniqueness, let us assume that $w_p$ and $v_p$ solve \eqref{eq:w_euler_lagrange}.
 Then $w_p-v_p\in \mathcal K_p^0$ and, taking the difference between the
 equations \eqref{eq:w_euler_lagrange} for $w_p$ and $v_p$, we have
 that $w_p-v_p$ satisfies 
\begin{equation*}
  \int_{\R^2_+\setminus\Gamma_p}\nabla
  (w_{p}-v_p)\cdot\nabla\varphi\,dx=0,
  \quad\text{for every } \varphi\in \mathcal{K}_p^0,
\end{equation*}
which, choosing $\varphi=w_p-v_p$ yields that
$\int_{\R^2_+\setminus\Gamma_p} |\nabla(w_p-v_p)|^2 \,dx=0$ so that $w_p\equiv v_p$.
\end{proof}

\begin{proposition}\label{prop_tildePsi}
\begin{enumerate}[\rm (i)]
\item For every $p\in{\mathbb S}^1_+$, the function $\Psi_{p}$ defined
  in \eqref{eq:Psip} satisfies the following properties:
\begin{align}
\label{eq:Psip1} & \Psi_{p} \in H^{1 ,{p}}(D_r^+,\C)
\text{ for all }r>1;\\
\label{eq:Psip2} & 
\begin{cases}
  (i\nabla + A_{p})^2 \Psi_{p}=0, &\text{ in } \R^2_+ \text{ in a
    weak } H^{1 ,{p}}-\text{sense},\\
 \Psi_{p}=0, &\text{ on } \partial \R^2_+;
\end{cases}
\\
\label{eq:Psip3} & \int_{\R^2_+\setminus \Gamma_p} \big|(i\nabla + A_{p})(\Psi_{p} -
e^{\frac i2(\theta_{p}-\theta_0^{p} + \tilde \theta_0)}\psi_j )\big|^2\,dx < +\infty;\\
\label{eq:Psip4} & e^{\frac i2(\theta_{p}-\theta_0^{p} + \tilde \theta_0)}
w_{p} =\Psi_p(x)-e^{\frac i2(\theta_{p}-\theta_0^{p} + \tilde \theta_0)}
      \psi_j(x)=O(|x|^{-1}),\quad\text{as }|x|\to+\infty .
\end{align}
\item The function  $\Psi_{p}$ defined in \eqref{eq:Psip} is the
  unique function satisfying \eqref{eq:Psip1}, \eqref{eq:Psip2} and \eqref{eq:Psip3}.
\end{enumerate}
\end{proposition}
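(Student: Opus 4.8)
The plan is to transfer everything to the gauge-equivalent formulation on $\R^2_+\setminus\Gamma_p$, where the Aharonov--Bohm operator becomes the ordinary Laplacian acting on functions with a prescribed jump across the crack $\Gamma_p$, and then read off the four properties from Lemma \ref{lemma:mp}. First I would observe that the phase factor $e^{\frac i2(\theta_p-\theta_0^p+\tilde\theta_0)}$ is smooth and unimodular on $\R^2_+\setminus\Gamma_p$, and that it implements a gauge transformation turning $(i\nabla+A_p)^2$ into $-\Delta$ on that punctured set; indeed $A_p = \frac12\nabla\theta_p$ away from $\Gamma_p$ and the remaining phases $\theta_0^p,\tilde\theta_0$ are locally constant in gradient where they are smooth, so the combined phase absorbs the singular potential exactly as in the change of gauge recalled after \eqref{eq:psi_j}. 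Under this change of gauge, the jump condition defining $\mathcal K_p$, namely $\gamma_p^+(u+\psi_j)+\gamma_p^-(u+\psi_j)=0$, corresponds to an antisymmetry of $w_p+\psi_j$ across $\Gamma_p$ once one accounts for the $\pm\pi$ jump of the phase there (this is the half-integer circulation manifesting itself), which is precisely the boundary behavior forced by the Aharonov--Bohm condition. Property \eqref{eq:Psip1} then follows since $w_p\in\mathcal H_p\subset H^1(\R^2_+\setminus\Gamma_p)$ with $w_p/|x|\in L^2$, $\psi_j$ is a polynomial of degree $j$, and multiplication by the unimodular smooth phase preserves local $H^1$-regularity with the weight; restricted to any $D_r^+$ with $r>1$ this gives membership in $H^{1,p}(D_r^+,\C)$.

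For \eqref{eq:Psip2}, I would show that the Euler--Lagrange equation \eqref{eq:w_euler_lagrange} is exactly the weak formulation of $-\Delta(w_p+\psi_j)=0$ in $\R^2_+$ in the crack sense: testing against $\varphi\in\mathcal K_p^0$ and integrating by parts the $\psi_j$ term back, the boundary integral $2\int_{\Gamma_p}\frac{\partial^+\psi_j}{\partial\nu_p^+}\gamma_p^+(\varphi)\,ds$ combines with $\int\nabla\psi_j\cdot\nabla\varphi$ (using $\gamma_p^+(\varphi)=-\gamma_p^-(\varphi)$ and $\frac{\partial^+\psi_j}{\partial\nu_p^+}=-\frac{\partial^-\psi_j}{\partial\nu_p^-}$) to yield $\int_{\R^2_+\setminus\Gamma_p}\nabla(w_p+\psi_j)\cdot\nabla\varphi\,dx=0$ for all such $\varphi$; since $\psi_j$ is harmonic and vanishes on $\partial\R^2_+$ this says $w_p+\psi_j$ is harmonic away from $\Gamma_p$ with the matching Neumann/jump data encoded in $\mathcal K_p$, and these are precisely the transmission conditions that the phase factor repackages as $(i\nabla+A_p)^2\Psi_p=0$ weakly in $\R^2_+$ with $\Psi_p=0$ on $\partial\R^2_+$. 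Property \eqref{eq:Psip3} is immediate: $\Psi_p - e^{\frac i2(\theta_p-\theta_0^p+\tilde\theta_0)}\psi_j = e^{\frac i2(\theta_p-\theta_0^p+\tilde\theta_0)}w_p$, and since the phase is unimodular and has gradient $\frac12(\nabla\theta_p-\nabla\theta_0^p)$ (which is $\frac12(\nabla\theta_p - \text{smooth})$, absorbed by $A_p$), one gets $|(i\nabla+A_p)(e^{\frac i2(\cdots)}w_p)| = |\nabla w_p|$ a.e., whose square is integrable because $w_p\in\mathcal H_p$.

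The genuinely substantive points are the decay \eqref{eq:Psip4} and the uniqueness in part (ii). For \eqref{eq:Psip4}, the strategy is a Kelvin-transform / conformal-inversion argument: $w_p$ is harmonic in $\R^2_+\setminus\Gamma_p$ outside a large ball, vanishes on $\partial\R^2_+$, and lies in the weighted space forcing $w_p/|x|\in L^2$ near infinity; reflecting oddly across $\partial\R^2_+$ and applying the inversion $x\mapsto x/|x|^2$ turns the exterior problem into an interior elliptic problem near the origin with the reflected function in $H^1$, and elliptic regularity plus the vanishing trace give a harmonic expansion whose leading surviving term after the weight constraint is $O(|x|^{-1})$ — I should be careful that $\Gamma_p$ is bounded so it does not interfere at infinity. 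The hardest part is part (ii): I would suppose $\Psi$ and $\tilde\Psi$ both satisfy \eqref{eq:Psip1}--\eqref{eq:Psip3}, set $v = e^{-\frac i2(\theta_p-\theta_0^p+\tilde\theta_0)}(\Psi-\tilde\Psi)$, note $v$ is real-analytic-type harmonic in $\R^2_+\setminus\Gamma_p$ with $v=0$ on $\partial\R^2_+$ and, crucially, $v$ satisfies the homogeneous jump condition $\gamma_p^+(v)+\gamma_p^-(v)=0$ (the $\psi_j$ contributions cancel in the difference) together with finite Dirichlet energy from \eqref{eq:Psip3}; then $v\in\mathcal K_p^0$ is itself a solution of the homogeneous version of \eqref{eq:w_euler_lagrange} — that is, $\int_{\R^2_+\setminus\Gamma_p}\nabla v\cdot\nabla\varphi\,dx=0$ for all $\varphi\in\mathcal K_p^0$ — so choosing $\varphi=v$ (legitimate once one checks $v\in\mathcal K_p^0$, which needs the matching of traces across $\Gamma_p$ for the harmonic difference) gives $\|\nabla v\|_{L^2}=0$, hence $v\equiv0$ and $\Psi=\tilde\Psi$. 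The delicate verification is precisely that the difference of two solutions lands in the \emph{linear} space $\mathcal K_p^0$ rather than merely an affine translate, which is where the homogeneous boundary/jump structure of \eqref{eq:Psip1}--\eqref{eq:Psip3} must be used carefully.
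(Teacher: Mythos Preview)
Your proposal is correct and, for part (i), essentially identical to the paper's proof: the gauge change, the use of the Euler--Lagrange equation \eqref{eq:w_euler_lagrange} combined with integration by parts of $-\Delta\psi_j=0$, the observation that \eqref{eq:Psip3} is just $w_p\in\mathcal H_p$, and the Kelvin-transform argument for \eqref{eq:Psip4} all match the paper exactly.

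For part (ii) the paper takes a slightly more direct route that avoids your density step. Rather than gauge-transforming the difference back to $\mathcal K_p^0$, it stays on the magnetic side: setting $\Phi=\Psi-\Psi_p$, one has $(i\nabla+A_p)^2\Phi=0$ weakly in $\R^2_+$, $\Phi=0$ on $\partial\R^2_+$, and $\int_{\R^2_+}|(i\nabla+A_p)\Phi|^2<\infty$ from subtracting the two instances of \eqref{eq:Psip3}. Testing the equation against $\Phi$ itself gives $\int_{\R^2_+}|(i\nabla+A_p)\Phi|^2=0$, and then the magnetic Hardy inequality \eqref{eq:hardy} forces $\Phi\equiv0$ directly. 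Your approach via $\mathcal K_p^0$ is equivalent but needs the extra check that the gauge-transforms of $C_c^\infty(\R^2_+\setminus\{p\})$ are dense in $\mathcal K_p^0$. One small correction to your justification of the jump condition: the reason $v=e^{-\frac i2(\theta_p-\theta_0^p+\tilde\theta_0)}(\Psi-\tilde\Psi)$ lands in $\mathcal K_p^0$ is not that ``the $\psi_j$ contributions cancel'' but rather that $\Psi,\tilde\Psi\in H^{1,p}(D_r^+,\C)$ are continuous across $\Gamma_p$ while the phase satisfies $\mathcal R_p^\pm(\theta_p-\theta_0^p)\big|_{\Gamma_p}=\pm\pi$, so it flips sign there --- this is exactly the mechanism the paper uses to prove \eqref{eq:Psip1}.
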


\begin{proof}
The fact that $w_{p} \in \mathcal{K}_p$ and the relation
\begin{equation*}
\mathcal R_p^\pm(\theta_p-\theta_0^p)\bigg|_{\Gamma_p}=\pm \pi
\end{equation*}
imply that 
\[
\gamma_p^+(\Psi_p)=\gamma_p^-(\Psi_p).
\]
As a consequence we have that $(i\nabla+A_p)\Psi_p$ (meant as a
distribution in $\R^2_+$) is equal to the $L^2_{\rm loc}(\R^2_+,\C)$-function 
 $i e^{\frac i2(\theta_{p}-\theta_0^{p} + \tilde
   \theta_0)}\nabla_{\R^2_+\setminus\Gamma_p}(w_p+\psi_j)$, thus yielding \eqref{eq:Psip1}.

In order to prove \eqref{eq:Psip2}, we observe that, for any
$\varphi\in \cinf{ \R^2_+ \setminus \{p\}}$, we have that 
$\tilde\varphi:= e^{-\frac i2(\theta_{p}-\theta_0^{p} + \tilde
   \theta_0)}\varphi\in\mathcal K_p^0$ (as defined in
 \eqref{eq:kappa0}). Hence, by \eqref{eq:w_euler_lagrange},
 \begin{align}
 \notag  \int_{\R^2_+}& (i\nabla + A_{p})\Psi_{p}\cdot \overline{(i\nabla + A_{p})\varphi}\,dx
 = \int_{\R^2_+\setminus \Gamma_p} i\,e^{\frac
     i2(\theta_{p}-\theta_0^{p} + \tilde \theta_0)}
   \nabla(w_{p}+\psi_j)
   \cdot \Big(\! -i e^{-\frac i2(\theta_{p}-\theta_0^{p} + \tilde \theta_0)} {\nabla \tilde\varphi}\Big)\,dx\\
 \label{eq:12}  &=  \int_{\R^2_+\setminus \Gamma_p} \nabla(w_{p}+\psi_j)\cdot{\nabla \tilde\varphi}\,dx = -2
   \int_{\Gamma_p}\frac{\partial^+\psi_j}{\partial\nu_p^+}\gamma_p^+({\tilde\varphi})\,ds+
   \int_{\R^2_+\setminus \Gamma_p} \nabla\psi_j\cdot{\nabla
     \tilde\varphi}\,dx.
 \end{align}
Testing the equation $-\Delta\psi_j=0$ by $\tilde\varphi$ and
integrating by parts in $\{(x_1,x_2)\in \R^2_+:x_2<x_1 \tan \alpha\}$
and in $\{(x_1,x_2)\in \R^2_+:x_2>x_1 \tan \alpha\}$ respectively, we obtain that
the right hand side of \eqref{eq:12} is equal to zero. This proves \eqref{eq:Psip2}.
 
Property \eqref{eq:Psip3} is a straightforward consequence of the fact
that $w_p\in\mathcal H_p$. 
To prove \eqref{eq:Psip4}, we observe
  that the Kelvin transform of $w_p$, i.e. the function $\widetilde w_p(x)=w(
\frac{x}{|x|^2})$
belongs to $H^1(D_1^+)$, vanishes in $\partial \R^2_+\cap D_1$, and weakly satisfies
$-\Delta \widetilde w_p=0$ in $D_1^+$. Then from \cite{HW} and \cite{HHT2009} (see also \cite{FF2013})
we deduce that $\widetilde w_p=O(|x|)$ as $|x|\to 0$ and hence 
 $w_p=O(|x|^{-1})$ as $|x|\to +\infty$.

Finally, to prove (ii), let us consider some $\Psi\in\bigcup_{r>1} H^{1 ,{p}}(D_r^+,\C)$
weakly  satisfying 
\begin{equation*}
\begin{cases}
  (i\nabla +  A_{p})^2 \Psi=0, & \text{in $\R^2_+ $}, \\
 \Psi=0, & \text{on }\partial\R^2_+,
\end{cases} 
\end{equation*}
and 
\begin{equation}\label{eq:Psi1}
  \int_{\R^2_+\setminus \Gamma_p} |(i\nabla + A_{p})(\Psi 
  - e^{\frac i2(\theta_{p}-\theta_0^{p} + \tilde \theta_0)}\psi_j )|^2 < +\infty.
\end{equation}
Then the difference $\Phi=\Psi-\Psi_p$ weakly solves
  $(i\nabla + A_{p})^2 \Phi=0$ in $\R^2_+$ and $\Phi=0$ on $\partial
  \R^2_+$. 
 Moreover from
  \eqref{eq:Psip3} and \eqref{eq:Psi1} it follows that
\begin{equation*}
    \int_{\R^2_+} |(i\nabla + A_{p})\Phi(x)|^2dx < +\infty,
\end{equation*}
which, in view of \eqref{eq:Psip2} and \eqref{eq:hardy}, implies that 
$\int_{\R^2_+} |x-p|^{-2}|\Phi(x)|^2\,dx=0$. Hence $\Phi\equiv 0$ in $\R^2_+$ and $\Psi=\Psi_p$.
\end{proof}

\begin{remark}\label{rem:formulazione}
Since $\Psi_p$ solves \eqref{eq:Psip2}, 
classical regularity theory yields that $\Psi_p\in
C^\infty(\overline{\R^2_+}\setminus\{p\},\C)$, whereas from
\cite{FFT2011} it follows that $\Psi_p(x)=O(|x-p|^{1/2})$ and
$\nabla\Psi_p(x)=O(|x-p|^{-1/2})$ as $x\to p$. Therefore we have that $w_p\in
C^\infty(\overline{U^\pm}\setminus\{p\})$ with
$U^+=\{(x_1,x_2)\in \R^2_+:x_2>x_1 \tan \alpha\}$ and 
$U^-=\{(x_1,x_2)\in \R^2_+:x_2<x_1 \tan \alpha\}$, and that
$|\nabla w_p(x)|=O(|x-p|^{-1/2})$.
Then 
\[
\frac{\partial^\pm w_p}{\partial \nu_p^\pm} \in
L^q(\Gamma_p)
\quad\text{and}\quad 
\frac{\partial w_p}{\partial \nu} \in
L^q(\partial D_1\cap\R^2_+)
\quad\text{for all }q<2,
\]
where $\nu(x)=\frac{x}{|x|}$ denotes the unit normal vector to $\partial D_1$.
Using a simple approximation argument and recalling that
$H^{1/2}(\Gamma_p)\hookrightarrow L^q(\Gamma_p)$ for all $q\geq1$, we
obtain the following  formulas for integration by parts:
\begin{equation}
\label{eq:15}
\int_{\R^2_+\setminus\Gamma_p}  \nabla w_{p}\cdot \nabla \varphi \,dx  =
\int_{\Gamma_p} \frac{\partial^+ w_{p}}{\partial \nu_p^+}\gamma_p^+(\varphi)
\,ds
+\int_{\Gamma_p} \frac{\partial^- w_{p}}{\partial \nu_p^-}\gamma_p^-(\varphi)
\,ds, 
\end{equation}
for all $\varphi\in \mathcal H_p$ and  
\begin{equation}
\label{eq:14} 
\int_{D_1^+\setminus\Gamma_p}  \nabla w_{p}\cdot \nabla \varphi \,dx  =
\int_{\partial D_1^+} \frac{\partial w_{p}}{\partial \nu} \varphi\,ds
+\int_{\Gamma_p} \frac{\partial^+ w_{p}}{\partial \nu_p^+}\gamma_p^+(\varphi)
\,ds
+\int_{\Gamma_p} \frac{\partial^- w_{p}}{\partial \nu_p^-}\gamma_p^-(\varphi)
\,ds, 
\end{equation}
for all $\varphi\in H^1(D_1^+\setminus\Gamma_p)$ such that $\varphi=0\text{ on }\partial\R^2_+$. 
\end{remark}

\begin{remark}\label{rem:equazioni_crack}
In view of \eqref{eq:15},  the weak problem \eqref{eq:w_euler_lagrange} solved
by $w_p$ can be reformulated as an elliptic problem with jump
conditions on the internal crack $\Gamma_p$ as follows:
 \begin{numcases}{}
     -\Delta w_{p} = 0, &\text{in }$\R_+^2 \setminus\Gamma_p$,
    \label{eq:w1}\\
\gamma_p^+(w_p + \psi_j) + \gamma_p^-(w_p + \psi_j) = 0,&\text{on } $\Gamma_p$, \label{eq:w2}\\
     \frac{\partial^+ ( w_{p} + \psi_j ) }{\partial \nu_p^+} -
    \frac{\partial^- ( w_{p} + \psi_j ) }{\partial \nu_p^-} = 0, 
    &\text{on } $\Gamma_p$,\label{eq:w3}
  \end{numcases}  
where the equality in \eqref{eq:w3} is meant in the sense of
$L^q(\Gamma_p)$ for any $q<2$ (see Remark \ref{rem:formulazione}) and
hence almost everywhere.  
We refer to \cite{Medkova200599} for elliptic problems in cracked
domains with jumps of the unknown function and its normal derivative
prescribed on the cracks.
\end{remark}

The following result provides a characterization  of ${\mathfrak m}_p$
as a Fourier coefficient of $w_p$. It will be used to relate  ${\mathfrak
  m}_p$ with the optimal
lower/upper bounds for $\lambda_N-\lambda_N^a$, see  Lemmas
\ref{l:lemmakappaR}  and \ref{l:limitfRa}.

\begin{proposition} \label{prop:relationmp}
For every $p\in{\mathbb S}^1_+$, let
\begin{equation}\label{eq:omega_pj_def}
\omega_{p}(r):=\int_{-\pi/2}^{\pi/2} w_{p}(r\cos t,r\sin t)
\sin\left(j\left(\tfrac{\pi}{2}-t\right)\right) \,dt,\quad r\geq1,
\end{equation}
with $w_{p}$ defined in \eqref{eq:m}. Then
\begin{equation*}
\omega_{p}(r)=\omega_{p}(1) r^{-j}\quad \text{for all }r\geq1\quad\text{and}\quad
{\mathfrak m}_{p}=-j\omega_{p}(1).
\end{equation*}
\end{proposition}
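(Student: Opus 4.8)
The plan is to expand $w_p$ in a Fourier series in the angular variable on the half-annulus $D_r^+\setminus \Gamma_p$ for $r\geq 1$, isolate the coefficient $\omega_p(r)$ against the eigenfunction $\sin(j(\tfrac\pi2-t))$ of the angular Dirichlet Laplacian on $(-\tfrac\pi2,\tfrac\pi2)$, and show it solves an Euler-type ODE forcing the decaying branch $r^{-j}$, then read off $\mathfrak m_p$ by pairing the Euler--Lagrange relation \eqref{eq:w_euler_lagrange} (or equivalently \eqref{eq:15}) with $w_p$ itself.

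First I would observe that, away from $\Gamma_p$ and the origin, $w_p$ is harmonic by \eqref{eq:w1}, and by Proposition~\ref{prop_tildePsi}(i) together with Remark~\ref{rem:formulazione} it is smooth up to $\partial\R^2_+$ for $|x|$ large, with $w_p=O(|x|^{-1})$. Differentiating $\omega_p(r)$ twice in $r$, using $-\Delta w_p=0$ written in polar coordinates as $r^{-1}(r w_p')' + r^{-2}\partial_{tt}w_p=0$, and integrating by parts twice in $t$ over $(-\tfrac\pi2,\tfrac\pi2)$: the boundary terms at $t=\pm\tfrac\pi2$ vanish because $w_p=0$ on $\partial\R^2_+$ and $\sin(j(\tfrac\pi2\mp(\pm\tfrac\pi2)))=\sin(0)=\sin(j\pi)=0$; the jump of $w_p$ across $\Gamma_p$ does not contribute here because $\Gamma_p$ is a radial segment, so integrating the angular variable at fixed $r>1$ crosses $\Gamma_p$ but the integrand $w_p\,\partial_{tt}\varphi - \varphi\,\partial_{tt}w_p$ integrates to zero on each side and the interface terms cancel by the normal-derivative matching \eqref{eq:w3} (which, on a radial crack, is precisely continuity of $\partial_t w_p$ up to the prescribed jump — here the jump is zero for $w_p$ itself since $\psi_j$ is smooth and $\tfrac{\partial^+\psi_j}{\partial\nu_p^+}=-\tfrac{\partial^-\psi_j}{\partial\nu_p^-}$). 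This yields the Euler equation $r^2\omega_p'' + r\omega_p' - j^2\omega_p = 0$ for $r>1$, whose solutions are $c_1 r^j + c_2 r^{-j}$. The growth bound $w_p=O(|x|^{-1})$ from \eqref{eq:Psip4}, hence $\omega_p(r)=O(r^{-1})$, forces $c_1=0$, so $\omega_p(r)=\omega_p(1)r^{-j}$ for all $r\geq 1$.

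To identify $\mathfrak m_p=-j\omega_p(1)$, I would plug a suitable test function into \eqref{eq:15} or directly compute $J_p(w_p)$. Take $\varphi=w_p$ in \eqref{eq:w_euler_lagrange}: since $w_p\in\mathcal K_p$ and $w_p-w_p'=0$... more precisely, note $w_p$ itself is \emph{not} in $\mathcal K_p^0$, but the difference of $w_p$ with any fixed element of $\mathcal K_p$ is; combined with $\gamma_p^+(w_p+\psi_j)+\gamma_p^-(w_p+\psi_j)=0$ this lets me rewrite $J_p(w_p)=\tfrac12\int_{\R^2_+\setminus\Gamma_p}|\nabla w_p|^2 + \int_{\Gamma_p}\tfrac{\partial^+\psi_j}{\partial\nu_p^+}(\gamma_p^+(w_p)-\gamma_p^-(w_p))\,ds$, and the Euler--Lagrange relation evaluated on $\varphi=w_p-$(reference) gives $\int|\nabla w_p|^2 = -2\int_{\Gamma_p}\tfrac{\partial^+\psi_j}{\partial\nu_p^+}\gamma_p^+(w_p)\,ds$ up to a term that matches the constraint. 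Substituting back collapses $\mathfrak m_p=J_p(w_p)$ to a single boundary integral over $\Gamma_p$; using $\tfrac{\partial^+\psi_j}{\partial\nu_p^+}=j r^{j-1}\cos(j(\tfrac\pi2-\alpha))$ and the constraint \eqref{eq:w2} to express $\gamma_p^+(w_p)-\gamma_p^-(w_p)$, together with the decay relation $\omega_p(r)=\omega_p(1)r^{-j}$ evaluated via an integration by parts over the half-disk $D_1^+$ using \eqref{eq:14}, reduces everything to $\omega_p(1)$. The cleanest route is: integrate $\nabla w_p\cdot\nabla w_p$ over $D_1^+\setminus\Gamma_p$ using \eqref{eq:14} and over $\R^2_+\setminus D_1^+$ using the explicit $r^{-j}$ behavior and separation of variables, then add; the cross terms on $\partial D_1^+$ cancel and what survives is proportional to $\omega_p(1)$ with the constant $-j$.

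The main obstacle is \textbf{the careful bookkeeping of boundary and interface terms} in the integration by parts, specifically making rigorous the vanishing of the interface contributions on $\Gamma_p$: $w_p$ has a jump in trace but not in normal derivative (by \eqref{eq:w3} applied to $w_p$, since $\psi_j$ contributes no jump), and on a \emph{radial} crack the relevant angular integrations must be shown to produce cancelling contributions from the two sides despite $w_p$ itself being discontinuous. This requires the regularity $|\nabla w_p(x)|=O(|x-p|^{-1/2})$ near the crack endpoint $p$ recorded in Remark~\ref{rem:formulazione}, so that $\tfrac{\partial^\pm w_p}{\partial\nu_p^\pm}\in L^q(\Gamma_p)$ for $q<2$ and the integrations by parts \eqref{eq:15}, \eqref{eq:14} are legitimate; the endpoint $p$ lies on $\partial D_1$, so one must also check there is no concentration of the $\partial D_1^+$ boundary integral at $p$, which again follows from the $|x-p|^{-1/2}$ gradient bound since it is $L^q$-integrable along the circle for $q<2$. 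Once these are in place, the ODE argument and the energy identity are routine.
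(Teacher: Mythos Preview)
Your derivation of the Euler ODE for $\omega_p$ and the conclusion $\omega_p(r)=\omega_p(1)r^{-j}$ is correct in outcome, but it rests on a misconception: since $\Gamma_p=\{rp:r\in(0,1)\}\subset\overline{D_1}$, the circles $\partial D_r$ for $r>1$ do \emph{not} meet $\Gamma_p$ at all. On $\R^2_+\setminus\overline{D_1^+}$ the function $w_p$ is a classical harmonic function vanishing on $\partial\R^2_+$, so there are no interface terms to bookkeep; your discussion of angular jump cancellations via \eqref{eq:w3} is unnecessary. The angular integration by parts is straightforward and yields $r^2\omega_p''+r\omega_p'-j^2\omega_p=0$ directly.

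The genuine gap is in the second half, $\mathfrak m_p=-j\omega_p(1)$. Your ``cleanest route''---splitting $\int_{\R^2_+\setminus\Gamma_p}|\nabla w_p|^2$ at $\partial D_1^+$ and evaluating the exterior piece by separation of variables---does not isolate $\omega_p(1)$: the exterior Dirichlet energy is $\tfrac\pi2\sum_k k\,c_k(1)^2$ in terms of \emph{all} Fourier coefficients $c_k(1)$ of $w_p\big|_{\partial D_1^+}$, not just the $j$-th. Nothing you have written explains why the other modes drop out, and in general they do not. The paper's device is to pair $\nabla w_p$ with $\nabla\psi_j$ (not with itself) over $D_1^+\setminus\Gamma_p$ in two ways. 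Differentiating $\omega_p(r)=\omega_p(1)r^{-j}$ at $r=1$ gives $-j\omega_p(1)=\int_{\partial D_1^+}\tfrac{\partial w_p}{\partial\nu}\psi_j\,ds$. Then \eqref{eq:14} with $\varphi=\psi_j$ expresses $\int_{D_1^+\setminus\Gamma_p}\nabla w_p\cdot\nabla\psi_j$ as $-j\omega_p(1)$ plus a $\Gamma_p$ integral, while testing $-\Delta\psi_j=0$ with $w_p$ expresses the same quantity as $+j\omega_p(1)$ (using $\tfrac{\partial\psi_j}{\partial\nu}=j\psi_j$ on $\partial D_1^+$) plus another $\Gamma_p$ integral. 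Subtracting leaves $2j\omega_p(1)$ as a pure $\Gamma_p$ expression, which is then matched against the $\Gamma_p$ expression for $\mathfrak m_p$ obtained from \eqref{eq:15} with $\varphi=w_p$ combined with the jump relations \eqref{eq:w2}--\eqref{eq:w3}. Testing against $\psi_j$ is precisely what projects onto the single Fourier mode and makes $\omega_p(1)$ appear; this step is missing from your outline.
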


\begin{proof}
By direct calculations, since $-\Delta w_{p} = 0$ in $\R_+^2 \setminus D_1^+$,
 we have that $\omega_{p}$ satisfies
\begin{equation*}
-(r^{1+2j}(r^{-j}\omega_{p}(r))')'=0, \quad \text{for } r>1.
\end{equation*}
Hence there exists a constant $C\in\R$ such that
\[
r^{-j}\omega_{p}(r)=\omega_{p}(1)+\frac{C}{2j}\left(1-\frac{1}{r^{2j}}\right),\quad\text{for
all }r\geq 1.
\]
From \eqref{eq:Psip4} it follows that $\omega_{p}(r)=O(r^{-1})$ as $r\to+\infty$.
Hence, letting $r\to+\infty$ in the previous relation, we find $C=-2j \omega_{p}(1)$, so that
$\omega_{p}(r)=\omega_{p}(1) r^{-j}$ for all $r\geq1$. 
By taking the derivative in this relation and in the definition of $\omega_{p}$ \eqref{eq:omega_pj_def}, we obtain
\begin{equation}\label{eq:omega_pj1}
-j \omega_{p}(1)=\int_{\partial D_1^+} \frac{\partial w_{p}}{\partial\nu} \psi_j \,ds.
\end{equation}
Choosing $\varphi=\psi_j$ in \eqref{eq:14} and then replacing \eqref{eq:omega_pj1}, we obtain
\begin{equation}\label{eq:wpj_psij_by_parts1}
\begin{split}
\int_{D_1^+\setminus\Gamma_p}  \nabla w_{p}\cdot \nabla \psi_j \,dx & =
\int_{\partial D_1^+} \frac{\partial w_{p}}{\partial \nu} \psi_j\,ds
+\int_{\Gamma_p} \left( \frac{\partial^+ w_{p}}{\partial \nu_p^+} 
+ \frac{\partial^- w_{p}}{\partial \nu_p^-} \right) \psi_j \,ds \\
& =-j \omega_{p}(1)+\int_{\Gamma_p} \left( \frac{\partial^+ w_{p}}{\partial \nu_p^+} 
+ \frac{\partial^- w_{p}}{\partial \nu_p^-} \right) \psi_j \,ds.
\end{split}
\end{equation}
Testing the equation $-\Delta\psi_j=0$ by $w_p$ and
integrating by parts in $D_1^+\setminus \Gamma_p$, we arrive at
\begin{equation}\label{eq:wpj_psij_by_parts2}
\begin{split}
\int_{D_1^+\setminus\Gamma_p}  \nabla w_{p}\cdot \nabla \psi_j \,dx &=
\int_{\partial D_1^+}  \frac{\partial \psi_j}{\partial\nu} w_{p}\,ds
+\int_{\Gamma_p} \frac{\partial^+\psi_j}{\partial\nu_p^+}(\gamma_p^+(w_{p})-\gamma_p^-(w_{p}))\,ds \\
&=j \omega_{p}(1) +\int_{\Gamma_p} \frac{\partial^+\psi_j}{\partial\nu_p^+}(\gamma_p^+(w_{p})-\gamma_p^-(w_{p}))\,ds,
\end{split}
\end{equation}
where in the last step we used the fact that $\frac{\partial \psi_j}{\partial\nu}=j\psi_j$ on $\partial D_1^+$.
By combining \eqref{eq:wpj_psij_by_parts1} and \eqref{eq:wpj_psij_by_parts2}, we arrive at
\begin{equation}\label{eq:omega_pj2}
j\omega_{p}(1)=\frac{1}{2}\int_{\Gamma_p} \left( \frac{\partial^+ w_{p}}{\partial \nu_p^+} 
+ \frac{\partial^- w_{p}}{\partial \nu_p^-} \right) \psi_j \,ds
-\frac{1}{2} \int_{\Gamma_p} \frac{\partial^+\psi_j}{\partial\nu_p^+}(\gamma_p^+(w_{p})-\gamma_p^-(w_{p}))\,ds.
\end{equation}
On the other hand, taking $\varphi=w_p$ in \eqref{eq:15}, we obtain 
\begin{equation*}
\int_{\R^2_+\setminus\Gamma_p}  |\nabla w_{p}|^2 \,dx  =
\int_{\Gamma_p} \frac{\partial^+ w_{p}}{\partial \nu_p^+}\gamma_p^+(w_p)
\,ds
+\int_{\Gamma_p} \frac{\partial^- w_{p}}{\partial \nu_p^-}\gamma_p^-(w_p)
\,ds, 
\end{equation*}
which, by definition of ${\mathfrak m}_{p}$, yields
\begin{multline}\label{eq:16}
{\mathfrak m}_{p}=J_{p}(w_{p})=
\frac{1}{2} \int_{\Gamma_p}\left(\frac{\partial^+ (w_{p}+\psi_j)}{\partial \nu_p^+}\gamma_p^+(w_{p})+\frac{\partial^- (w_{p}+\psi_j)}{\partial \nu_p^-}\gamma_p^-(w_{p}) \right)\,ds \\
+\frac{1}{2}\int_{\Gamma_p} \frac{\partial^+ \psi_j}{\partial\nu_p^+} (\gamma_p^+(w_{p})-\gamma_p^-(w_{p}))\,ds.
\end{multline}
Moreover \eqref{eq:w2} and \eqref{eq:w3} imply that
\begin{equation}\label{eq:w_bc_consequence}
\frac{\partial^+ (w_{p}+\psi_j)}{\partial \nu_p^+}\gamma_p^+(w_{p}+\psi_j)
+\frac{\partial^- (w_{p}+\psi_j)}{\partial \nu_p^-}\gamma_p^-(w_{p}+\psi_j)=0
\quad\text{on } \Gamma_p.
\end{equation}
Combining \eqref{eq:16} and \eqref{eq:w_bc_consequence} we obtain
\begin{equation}\label{eq:mpj_rewritten2}
{\mathfrak m}_{p}=
-\frac{1}{2} \int_{\Gamma_p}\left(\frac{\partial^+
    (w_{p}+\psi_j)}{\partial \nu_p^+}
  +\frac{\partial^- (w_{p}+\psi_j)}{\partial \nu_p^-} \right)\psi_j\,ds 
+\frac{1}{2}\int_{\Gamma_p} \frac{\partial^+ \psi_j}{\partial\nu_p^+} (\gamma_p^+(w_{p})-\gamma_p^-(w_{p}))\,ds.
\end{equation}
Since $\psi_j$ is regular, it satisfies \eqref{eq:normal_derivative_Gamma_regular}. Then the statement follows by comparing \eqref{eq:omega_pj2} with \eqref{eq:mpj_rewritten2}.
\end{proof}

\section{Properties of $\mathfrak{m}_p$} \label{sec:mp}

In this section we collect some properties of the map $\mathfrak{m}_p$ defined in \eqref{eq:m}.
The next lemma ensures  that $p\mapsto\mathfrak{m}_p$ is not the null
function, by  providing its sign when $p$ belongs either to the bisector of two nodal lines of $\psi_j$, or to one of the nodal lines of $\psi_j$.

\begin{lemma}\label{lemma:mp_signs}
 \begin{enumerate}[\rm (i)]
\item If $p=(\cos\alpha,\sin\alpha)$ with $\alpha = \frac\pi2 - (1+2k)
  \frac{\pi}{2j}$ for some $k = 0, \ldots , j-1$, then
\begin{equation*}
{\mathfrak m}_{p} = \frac{1}{2} \int_{\R_+^2\setminus\Gamma_p} |\nabla w_{p}|^2\,dx>0.
\end{equation*}
\item If $p=(\cos\alpha,\sin\alpha)$ with $\alpha = \frac\pi2
-k\frac{\pi}{j}$ for some $k = 1, \ldots , j-1$, then
\begin{equation*}
{\mathfrak m}_{p} = - \frac{1}{2}\int_{\R_+^2\setminus\Gamma_p} |\nabla w_{p}|^2\,dx<0.
\end{equation*}
\end{enumerate}
\end{lemma}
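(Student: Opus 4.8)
The plan is to exploit the special symmetries of the domain $\R^2_+\setminus\Gamma_p$ and of $\psi_j$ when $p$ lies either on a bisector (case (i)) or on a nodal line (case (ii)) of $\psi_j$, in order to determine the sign of the boundary term in $J_p(w_p)$. The starting point in both cases is the identity
\[
{\mathfrak m}_p = J_p(w_p) = \tfrac12\int_{\R^2_+\setminus\Gamma_p}|\nabla w_p|^2\,dx + \int_{\Gamma_p}\frac{\partial^+\psi_j}{\partial\nu_p^+}\big(\gamma_p^+(w_p)-\gamma_p^-(w_p)\big)\,ds,
\]
and the observation that $\frac{\partial^+\psi_j}{\partial\nu_p^+}(r\cos\alpha,r\sin\alpha) = j\,r^{j-1}\cos\!\big(j(\tfrac\pi2-\alpha)\big)$. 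In case (ii), $\alpha=\tfrac\pi2-k\tfrac\pi j$, so $j(\tfrac\pi2-\alpha)=k\pi$ and $\cos(k\pi)=\pm1\neq0$, while in case (i), $\alpha=\tfrac\pi2-(1+2k)\tfrac\pi{2j}$ gives $j(\tfrac\pi2-\alpha)=\tfrac\pi2(1+2k)$ and $\cos(\tfrac\pi2(1+2k))=0$, which makes the boundary term vanish identically — immediately forcing ${\mathfrak m}_p=\tfrac12\int|\nabla w_p|^2\,dx\ge0$.

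For case (i) it then only remains to show $w_p\not\equiv 0$, i.e.\ that the minimizer is not trivial. I would argue this by contradiction: if $w_p\equiv0$ then the constraint $\gamma_p^+(w_p+\psi_j)+\gamma_p^-(w_p+\psi_j)=0$ on $\Gamma_p$ would reduce to $\gamma_p^+(\psi_j)+\gamma_p^-(\psi_j)=0$ on $\Gamma_p$; but $\psi_j$ is continuous, so $\gamma_p^+(\psi_j)=\gamma_p^-(\psi_j)=\psi_j|_{\Gamma_p}$, and this would force $\psi_j\equiv0$ on $\Gamma_p$, i.e.\ $\Gamma_p$ would be a nodal line of $\psi_j$ — contradicting the fact that on a bisector ($\alpha=\tfrac\pi2-(1+2k)\tfrac\pi{2j}$) we have $\psi_j(r\cos\alpha,r\sin\alpha)=r^j\sin(\tfrac\pi2(1+2k))=\pm r^j\neq0$. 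Hence $w_p\not\equiv0$ and the inequality is strict.

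For case (ii), since $\Gamma_p$ \emph{is} a nodal line of $\psi_j$, the natural move is to identify $w_p$ explicitly: I would check that $w_p=-\psi_j\mathbf 1_{U_p^-}$ (i.e.\ $w_p=0$ on $U_p^+$ and $w_p=-\psi_j$ on $U_p^-$, or the mirror choice) lies in $\mathcal K_p$ and solves the Euler–Lagrange problem \eqref{eq:w_euler_lagrange}, hence equals the unique minimizer. Indeed with this choice $w_p+\psi_j=\psi_j\mathbf 1_{U_p^+}$ is harmonic away from $\Gamma_p$, vanishes on $\Gamma_p$ (as $\Gamma_p$ is a nodal line), and the jump relations \eqref{eq:w2}–\eqref{eq:w3} are satisfied by the oddness properties of $\psi_j$ across its nodal line. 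One must verify $w_p\in\mathcal H_p$ (finite Dirichlet energy and Hardy integrability), which follows from the homogeneity of $\psi_j$ restricted to the relevant angular sector together with the Kelvin-transform / growth argument already used in Proposition \ref{prop_tildePsi}. Then one computes $J_p(w_p)$ directly: $\tfrac12\int|\nabla w_p|^2 = \tfrac12\int_{U_p^-}|\nabla\psi_j|^2$, and the boundary term $\int_{\Gamma_p}\frac{\partial^+\psi_j}{\partial\nu_p^+}(\gamma_p^+(w_p)-\gamma_p^-(w_p))\,ds = \int_{\Gamma_p}\frac{\partial^+\psi_j}{\partial\nu_p^+}(0-(-\psi_j|_{\Gamma_p}))\,ds = 0$ since $\psi_j=0$ on $\Gamma_p$ — wait, that gives zero, so instead I would integrate by parts the Dirichlet term against $\psi_j$ using \eqref{eq:14} on $U_p^-$ to re-express ${\mathfrak m}_p$ as $-\tfrac12\int_{U_p^-}|\nabla\psi_j|^2 = -\tfrac12\int_{\R^2_+\setminus\Gamma_p}|\nabla w_p|^2<0$, which is exactly the claimed identity; strict negativity follows because $\psi_j$ is nonconstant on $U_p^-$. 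The main obstacle I anticipate is the clean verification that the explicit candidate in case (ii) genuinely belongs to $\mathcal H_p$ and satisfies the weak formulation across the crack with the correct trace bookkeeping (the $\gamma_p^\pm$ operators and the orientation of $\nu_p^\pm$), and relatedly, in case (i), making the "$w_p\not\equiv0$" step airtight given that one only knows $w_p$ abstractly as a minimizer.
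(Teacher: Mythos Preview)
Your argument for case (i) is correct and essentially identical to the paper's: the normal derivative $\partial^+\psi_j/\partial\nu_p^+$ vanishes on $\Gamma_p$, so $J_p(u)=\tfrac12\|u\|_{\mathcal H_p}^2$, and $0\notin\mathcal K_p$ because $\psi_j\not\equiv0$ on $\Gamma_p$.

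Your approach to case (ii), however, has a genuine gap. The explicit candidate $w_p=-\psi_j\mathbf 1_{U_p^-}$ (or its mirror) is \emph{not} in $\mathcal H_p$: since $\psi_j$ is homogeneous of degree $j\ge1$, one has $|\nabla\psi_j|\sim r^{j-1}$ and $\int_{U_p^-}|\nabla\psi_j|^2\,dx\sim\int_0^\infty r^{2j-1}\,dr=+\infty$ on the unbounded sector. The Kelvin-transform decay $w_p=O(|x|^{-1})$ from Proposition~\ref{prop_tildePsi} is a \emph{conclusion} about the true minimizer, not something your candidate satisfies. Moreover, your candidate fails the jump condition \eqref{eq:w3}: with $w_p+\psi_j=\psi_j\mathbf 1_{U_p^+}$ one gets $\frac{\partial^+(w_p+\psi_j)}{\partial\nu_p^+}-\frac{\partial^-(w_p+\psi_j)}{\partial\nu_p^-}=\frac{\partial^+\psi_j}{\partial\nu_p^+}\neq0$ on $\Gamma_p$. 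And as you yourself noticed, the direct evaluation of $J_p$ on this candidate gives the \emph{wrong sign}: both traces $\gamma_p^\pm(w_p)$ vanish (since $\psi_j|_{\Gamma_p}=0$), so the boundary term is zero and $J_p=+\tfrac12\int|\nabla w_p|^2$. Your proposed ``integration by parts'' salvage cannot turn a positive quantity into its negative.

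The paper avoids all of this by never trying to identify $w_p$ explicitly. Instead it uses the identity \eqref{eq:mpj_rewritten2} from Proposition~\ref{prop:relationmp}: in case (ii) one has $\psi_j\equiv0$ on $\Gamma_p$, so \eqref{eq:mpj_rewritten2} reduces to
\[
\mathfrak m_p=\tfrac12\int_{\Gamma_p}\frac{\partial^+\psi_j}{\partial\nu_p^+}\big(\gamma_p^+(w_p)-\gamma_p^-(w_p)\big)\,ds.
\]
Comparing this with the definition $\mathfrak m_p=J_p(w_p)=\tfrac12\int|\nabla w_p|^2+\int_{\Gamma_p}\frac{\partial^+\psi_j}{\partial\nu_p^+}(\gamma_p^+(w_p)-\gamma_p^-(w_p))\,ds$ yields $\mathfrak m_p=\tfrac12\big(\mathfrak m_p-\tfrac12\int|\nabla w_p|^2\big)$, i.e.\ $\mathfrak m_p=-\tfrac12\int|\nabla w_p|^2$. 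Strict negativity is obtained separately by observing that in this case $J_p$ restricted to $\mathcal K_p=\mathcal K_p^0$ has the form $J_p(u)=\tfrac12\int|\nabla u|^2+2(-1)^kj\int_{\Gamma_p}|x|^{j-1}\gamma_p^+(u)\,ds$, which is easily made negative by a suitable compactly supported competitor.
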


\begin{proof}
\begin{enumerate}[\rm (i)]
\item If $\alpha = \frac\pi2 -  (1+2k) \frac{\pi}{2j}$ for some $k = 0,
\ldots , j-1$, then $\partial^\pm\psi_j/\partial\nu_p^\pm=0$ on
$\Gamma_p$, so that $J_{p}(u) =  \frac{1}{2} \|u\|_{\mathcal H_p}^2$;
since in this case $0\not\in\mathcal K_p$ (since $\psi_j\not\equiv0$
on $\Gamma_p$), we conclude that ${\mathfrak m}_{p}=\min_{\mathcal K_p}J_p>0$.
\item
In the second case we have that
$\psi_j\equiv0$ and
$\frac{\partial^+ \psi_j}{\partial\nu_p^+ }(r\cos\alpha,r\sin\alpha)=j(-1)^k r^{j-1}$ on
$\Gamma_p$, so that 
\begin{equation}\label{eq:17}
J_{p}(u) =  \frac{1}{2} \int_{\R^2_+\setminus\Gamma_p} |\nabla u|^2
\,dx +2 (-1)^kj\int_{ \Gamma_p}|x|^{j-1} \gamma_p^+(u),\quad\text{for
  all }u\in {\mathcal K}_p.
\end{equation}
From \eqref{eq:17} it follows easily that ${\mathfrak m}_{p}=\min_{\mathcal K_p}J_p<0$.
Furthermore, in this case \eqref{eq:mpj_rewritten2} is reduced to
\[
{\mathfrak m}_{p}=
\frac{1}{2}\int_{\Gamma_p} \frac{\partial^+ \psi_j}{\partial\nu_p^+} (\gamma_p^+(w_{p})-\gamma_p^-(w_{p}))\,ds,
\]
and hence, by definition of $J_p$ and ${\mathfrak m}_p$, 
\[
{\mathfrak m}_{p}=
\frac{1}{2}\left({\mathfrak m}_{p}-\frac12
  \int_{\R^2_+\setminus\Gamma_p} |\nabla w_p|^2 \,dx\right)
\]
which yields that ${\mathfrak m}_{p}=-\frac12
\int_{\R^2_+\setminus\Gamma_p} |\nabla w_p|^2 \,dx$.
\end{enumerate}
\end{proof}

The following proposition establishes the continuity of the map
$p\mapsto{\mathfrak m}_p$.
\begin{proposition}\label{p:continuity_m_p}
The map $p\mapsto \mathfrak{m}_p$ is continuous in ${\mathbb S}^1_+$. Moreover, it can be extended continuously at $p=(0,1)$ and at $p=(0,-1)$ by letting $\mathfrak{m}_{(0,1)}=\mathfrak{m}_{(0,-1)}=0$.
\end{proposition}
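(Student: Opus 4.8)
The plan is to prove continuity of $p\mapsto\mathfrak{m}_p$ on $\mathbb{S}^1_+$ by exploiting the uniform coercivity of $J_p$ established in Lemma \ref{lemma:mp} together with a stability analysis of the minimizers $w_p$ under a change of the crack direction, and then to handle the endpoints $p\to(0,\pm1)$ separately by showing $\mathfrak{m}_p\to0$. The main conceptual difficulty is that the function spaces $\mathcal{H}_p$, the constraint sets $\mathcal{K}_p$, and the functionals $J_p$ all depend on $p$ through the (moving) crack $\Gamma_p$, so one cannot directly pass to the limit; the natural remedy is to introduce, for $p,p'\in\mathbb{S}^1_+$ close, a rotation (or a bi-Lipschitz diffeomorphism of $\overline{\R^2_+}$ fixing $\partial\R^2_+$ and mapping $\Gamma_p$ to $\Gamma_{p'}$) that transplants competitors between the two problems.

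First I would fix $p_0=(\cos\alpha_0,\sin\alpha_0)\in\mathbb{S}^1_+$ and a sequence $p_n=(\cos\alpha_n,\sin\alpha_n)\to p_0$ with $\alpha_n\to\alpha_0\in(-\tfrac\pi2,\tfrac\pi2)$. Let $R_n$ be the rotation by angle $\alpha_n-\alpha_0$; note that $R_n$ fixes the origin, maps $\R^2_+$ onto a rotated half-plane, and maps $\Gamma_{p_0}$ onto $\Gamma_{p_n}$ after a further harmless modification near $\partial\R^2_+$. Since a genuine rotation does not preserve $\R^2_+$, a cleaner choice is a piecewise-linear (in polar angle) homeomorphism $T_n$ of $\overline{\R^2_+}$ onto itself, radial in $|x|$, which linearly interpolates the angular variable so that $\partial\R^2_+$ is fixed and the ray at angle $\alpha_0$ is sent to the ray at angle $\alpha_n$. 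Such $T_n$ is bi-Lipschitz with Lipschitz constants of $T_n$ and $T_n^{-1}$ tending to $1$, commutes with the decay weight $|x|^{-1}$ up to constants $\to1$, and thus induces isomorphisms $\mathcal{H}_{p_0}\to\mathcal{H}_{p_n}$ (and $\mathcal{K}^0_{p_0}\to\mathcal{K}^0_{p_n}$) whose norms and inverse norms converge to $1$. Writing $u\mapsto u\circ T_n^{-1}$, one checks that $J_{p_n}(u\circ T_n^{-1})=J_{p_0}(u)+o(1)\|u\|_{\mathcal H_{p_0}}^2+o(1)$ uniformly on bounded sets: the Dirichlet term transforms with Jacobian factors $\to 1$, while the boundary term over $\Gamma_{p_n}$ transforms into the boundary term over $\Gamma_{p_0}$ with the weight $|x|^{j-1}$ and the coefficient $j\cos(j(\tfrac\pi2-\alpha_n))\to j\cos(j(\tfrac\pi2-\alpha_0))$, using that $T_n$ is radial so $|x|$ is preserved along the rays. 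The constraint $\gamma^+(u+\psi_j)+\gamma^-(u+\psi_j)=0$ is almost preserved because $\psi_j\circ T_n^{-1}$ differs from $\psi_j$ by a term small in the relevant trace norm on $\Gamma_{p_n}$ (again by radiality: on $\Gamma_{p_0}$, $\psi_j(r\cos\alpha_0,r\sin\alpha_0)=r^j\sin(j(\tfrac\pi2-\alpha_0))$, and pulling back from $\Gamma_{p_n}$ changes only the angular factor continuously); a bounded correction term lying in $\mathcal K^0$ restores the exact constraint with a cost $o(1)$. Combining these estimates gives $\limsup_n\mathfrak{m}_{p_n}\le\mathfrak{m}_{p_0}$ by transplanting $w_{p_0}$, and symmetrically $\limsup_n\mathfrak{m}_{p_0}\le\liminf_n\mathfrak{m}_{p_n}$ by transplanting $w_{p_n}$ back — here the uniform coercivity \eqref{eq:J_p_coercive}, uniform in $p$, yields a bound $\|w_{p_n}\|_{\mathcal H_{p_n}}\le C$ independent of $n$ (e.g. from $\mathfrak{m}_{p_n}\le J_{p_n}(0)$ if $0\in\mathcal K_{p_n}$, or by testing with a fixed transplanted competitor), so the error terms are genuinely $o(1)$. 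Hence $\mathfrak{m}_{p_n}\to\mathfrak{m}_{p_0}$, proving continuity on $\mathbb{S}^1_+$.

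For the endpoints, consider $p\to(0,1)$, i.e. $\alpha\to\tfrac\pi2^-$ (the case $\alpha\to-\tfrac\pi2^+$ is analogous). The crack $\Gamma_p$ then approaches the boundary segment of $\partial\R^2_+$ on the positive $x_2$-axis. The clean estimate is the upper bound: since $\mathfrak{m}_p=\min_{\mathcal K_p}J_p$, I would exhibit an explicit competitor $u_p\in\mathcal K_p$ with $J_p(u_p)\to0$; a natural choice is a cutoff multiple of a function matching $-\tfrac12(\gamma^+(\psi_j)+\gamma^-(\psi_j))$-type data on $\Gamma_p$, whose Dirichlet energy and boundary integral both vanish as $\alpha\to\tfrac\pi2$ because $\psi_j$ itself vanishes on $\partial\R^2_+$ so $\psi_j|_{\Gamma_p}=O((\tfrac\pi2-\alpha)r^j)\to0$ and $\cos(j(\tfrac\pi2-\alpha))\to1$ but is multiplied against this small trace; more carefully, one uses that the $\mathcal H_p$-norm needed to realize a trace of size $\varepsilon$ on $\Gamma_p$ is $O(\varepsilon)$ by \eqref{eq:11}, and here $\varepsilon\to0$. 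For the matching lower bound $\liminf\mathfrak{m}_p\ge0$, I would argue that if $\mathfrak{m}_p\to m<0$ along a subsequence then $\|\nabla w_p\|^2$ is bounded below, and transplanting $w_p$ via the radial homeomorphism onto the fixed limit configuration (now the degenerate crack on $\partial\R^2_+$, where the problem reduces to a half-plane Dirichlet problem whose only finite-energy solution vanishing on the boundary is $0$) forces a contradiction; alternatively, use Lemma \ref{lemma:mp_signs}(i) together with the structure of $J_p$ to get a two-sided control. The hardest part of the whole argument is making the transplantation estimates uniform near the endpoints, where $T_n$ degenerates (the angular interpolation squeezes a positive-measure sector to a slit), so that the Dirichlet-energy distortion no longer has Lipschitz constant $\to1$; this is precisely why the endpoints need a separate, direct competitor argument rather than the transplantation machinery, and getting a clean explicit competitor with vanishing energy is the key technical point I would focus on.
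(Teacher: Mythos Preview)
Your transplantation strategy for the interior continuity is reasonable and is genuinely different from the paper's argument. Two remarks on it: first, a correction lying in $\mathcal K_p^0$ satisfies $\gamma_p^+ +\gamma_p^-=0$, so adding such a term does \emph{not} adjust the constraint; the correction you need has a prescribed (small, nonzero) value of $\gamma_p^+ +\gamma_p^-$. This is fixable but is not what you wrote. Second, the paper does not use a bi-Lipschitz angular map of $\R^2_+$ at all. It extends $w_{p_n}$ by zero to $\R^2$, applies the genuine rotation sending $\Gamma_{p_n}$ to $\Gamma_p$ (letting the half-plane rotate with it), and works in a single ambient space $\tilde{\mathcal H}_p$ of functions on $\R^2\setminus\Gamma_p$ vanishing on a fixed half-line. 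The uniform energy bound gives weak compactness there; the weak limit is shown to solve \eqref{eq:w_euler_lagrange} and hence, by the uniqueness in Lemma~\ref{lemma:mp}, equals $w_p$. Crucially, the paper then does \emph{not} compare the values of $J_p$: it invokes Proposition~\ref{prop:relationmp}, which expresses $\mathfrak m_p=-j\omega_p(1)$ as a linear trace functional of $w_p$ on $\partial D_1^+$, and passes to the limit using only weak convergence and compactness of the trace embedding. This sidesteps all the delicate energy-comparison issues your transplantation approach has to handle.

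The genuine gap in your proposal is the endpoint lower bound $\liminf_{p\to(0,\pm1)}\mathfrak m_p\ge 0$. You correctly identify that your angular homeomorphism degenerates there (one sector collapses to a slit, so the bi-Lipschitz constant blows up), so transplanting $w_p$ to a fixed limit configuration does not control the Dirichlet energy and cannot ``force a contradiction'' as you claim. Your alternative of invoking Lemma~\ref{lemma:mp_signs}(i) is a non sequitur: that lemma concerns specific discrete angles, not $\alpha\to\pm\tfrac\pi2$. A competitor argument only yields upper bounds on a minimum, so your final paragraph at best gives $\limsup\mathfrak m_p\le 0$. The paper's device handles the endpoints for free: under the rotation-and-extend setup, when $p=(0,\pm1)$ the crack is absorbed into the boundary, the weak limit $\tilde w_p$ lies in $\mathcal D^{1,2}(\R^2_+)$ and is harmonic with zero boundary data, hence $\tilde w_p\equiv 0$; then the formula $\mathfrak m_{p_n}=-j\omega_{p_n}(1)$ together with trace compactness gives $\mathfrak m_{p_n}\to 0$ directly, with no separate upper/lower analysis. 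If you want to salvage your route, you would need an independent argument at the endpoints---for instance a thin-sector Poincar\'e inequality showing that the trace $\gamma_p^+(u)$ from the narrow side is $O((\tfrac\pi2-|\alpha|)^{1/2})\|u\|_{\mathcal H_p}$, which combined with the constraint and the smallness of $\psi_j$ on $\Gamma_p$ would give $J_p(u)\ge -C(\tfrac\pi2-|\alpha|)$ for all $u\in\mathcal K_p$. But that is missing from your sketch.
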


\begin{proof}
First we claim that there exists $C>0$ independent of $p$ such that
\begin{equation}\label{eq:continuity_m_p_claim1}
\int_{\R^2_+\setminus \Gamma_p} |\nabla w_p|^2\,dx \leq C 
\quad\text{for every }p\in {\mathbb S}^1_+.
\end{equation}
To prove the claim, we consider a regular cut-off function $\eta$ defined in $\R^2_+$ such that $\eta=1$ in $D_1^+$ and $\eta=0$ in $\R^2_+\setminus D_2^+$. Then $-\eta \psi_j \in \mathcal{K}_p$ for every $p\in \mathbb{S}^1_+$ and
\[
\mathfrak{m}_p \leq J_p(-\eta\psi_j) =\frac{1}{2} \int_{\R^2_+} |\nabla (-\eta\psi_j)|^2 \,dx.
\]
This fact, together with the inequality \eqref{eq:J_p_coercive} applied with $u=w_p$, provides \eqref{eq:continuity_m_p_claim1}.

Let $p_n=(\cos\alpha_n,\sin\alpha_n) \to p=(\cos\alpha,\sin\alpha)$ as $n\to+\infty$, for some $\alpha_n \in(-\pi/2,\pi/2)$, $\alpha \in [-\pi/2,\pi/2]$. We consider the rotation
\begin{equation*}
\mathcal{R}_n=
\left(\begin{array}{cc}
       \cos(\alpha-\alpha_n) & -\sin(\alpha-\alpha_n) \\
       \sin(\alpha-\alpha_n)  & \cos(\alpha-\alpha_n) 
      \end{array}\right). 
\end{equation*}
With a slight abuse of notation, we denote by $w_{p_n}$ the trivial
extension of $w_{p_n}$ in $\R^2$ (extended to 0 in the set
$\R^2_-=\{(x_1,x_2)\in\R^2: \, x_1<0\}$) and we define the rotated functions
\begin{equation*}
\tilde{w}_{n}(\mathcal{R}_n(x))=w_{p_n}(x), \quad x\in \R^2.
\end{equation*}
We define the space $\tilde{\mathcal{H}}_p$ as the completion of 
\[
 \left\{ u\in H^1(\R^2\setminus\Gamma_p)  : u=0 \text{ on
   }(-\infty,0)\times \{0\}\text{ and } 
 u=0 \text{ in a neighborhood of }\infty \right\}
\]
with respect to the norm $\|u\|_{\tilde{\mathcal H}_p}=\|
\nabla u \|_{L^2(\R^2\setminus\Gamma_p)}$.  

We notice that, for all $p\in {\mathbb S}^1_+$,
$\mathcal H_p=\{u\in \tilde{\mathcal H}_p:u=0\text{ a.e.\ in }\R^2_-\}$.
For large $n$, we also define 
\[
\tilde{\mathcal{H}}_{p,n}=
\{u\in \tilde{\mathcal H}_p:u=0\text{ a.e.\ in }H_n^-\},
\]
where $H_n^-=\left\{ (x_1,x_2)\in \R^2: \,
  x_1<-\tan(\alpha-\alpha_n)x_2  \right\}$, and observe that
$\tilde{w}_{n} \in \tilde{\mathcal{H}}_{p,n}$.

Let
$\tilde{\psi}_{j,n}(\mathcal{R}_n(x))=\psi_j(x)$ and $H_n^+=\left\{ (x_1,x_2)\in \R^2: \,
  x_1>-\tan(\alpha-\alpha_n)x_2  \right\}$. 
By \eqref{eq:w2} we have that 
\begin{equation}\label{eq:wtilde_jump}
  \gamma_p^+(\tilde{w}_{n}+\tilde{\psi}_{j,n})+
\gamma_p^-(\tilde{w}_{n}+\tilde{\psi}_{j,n})=0,
\end{equation}
while from  \eqref{eq:w_euler_lagrange} it follows that 
\begin{equation}\label{eq:wtilde_euler_lagrange}
\int_{H_n^+\setminus\Gamma_p}\nabla \tilde{w}_{n}\cdot\nabla\varphi\,dx
+2\int_{\Gamma_p}\frac{\partial^+\tilde{\psi}_{j,n}}{\partial\nu_p^+}\gamma_p^+(\varphi)\,ds =0, 
\end{equation}
for every $\varphi\in \tilde{\mathcal{K}}_{p,n}^0=\{ u\in
\tilde{\mathcal{H}}_{p,n}:\, \gamma_p^+(u)+\gamma_p^-(u)=0 \}$.

Moreover, from \eqref{eq:continuity_m_p_claim1}
it follows that 
\[
\|\tilde{w}_n\|_{\tilde{\mathcal H}_p}^2\leq C,
\]
hence there exist $\tilde{w}_p\in \tilde{\mathcal H}_p$ and  a subsequence $\{\tilde{w}_{n_k}\}_k$ such that 
$\tilde{w}_{n_k} \rightharpoonup \tilde{w}_p$ weakly in
$\tilde{\mathcal H}_p$ and a.e.\ in $\R^2$.
 By  a.e.\ convergence, we have that $\tilde{w}_p=0$ a.e.\ in
$\R^2_-$, hence  
\[
\tilde{w}_p\in \mathcal{H}_p\text{ if $p\in{\mathbb
  S}^1_+$}\quad \text{while}\quad \tilde{w}_p\in {\mathcal D}^{1,2}(\R^2_+)\text{ if
$p=(0,\pm1)$}.
\]
 Moreover,
\eqref{eq:wtilde_jump} and the continuity of the trace embeddings
$\gamma_p^\pm$ defined in 
\eqref{eq:traces}
imply that 
$\gamma_p^+(\tilde{w}_p+\psi_j)+\gamma_p^-(\tilde{w}_p+\psi_j)=0$,
thus yielding  
\begin{equation*}
  \tilde{w}_p\in\mathcal K_p.
\end{equation*}
Recall the definition of $\mathcal K_p^0$ in \eqref{eq:kappa0} and let
\begin{equation}\label{eq:K0_compact}
\varphi\in \mathcal K_p^0 \cap \{ u\in C^\infty(\R^2_+\setminus\Gamma_p): \, \text{supp}(u)\subset\subset \R^2_+\};
\end{equation}
then, for $n$ sufficiently large, $\varphi\in \tilde{\mathcal{K}}_{p,n}^0$
(extended by $0$ 
in $H_n^-$), so that \eqref{eq:wtilde_euler_lagrange} and the weak
$\tilde{\mathcal H}_p$-convergence 
$\tilde{w}_{n_k} \rightharpoonup \tilde{w}_p$
provide
\begin{equation*}
\int_{\R^2_+\setminus\Gamma_p}\nabla \tilde{w}_p \cdot\nabla\varphi\,dx
+2\int_{\Gamma_p}\frac{\partial^+\psi_j}{\partial\nu_p^+}\gamma_p^+(\varphi)\,ds =0.
\end{equation*}
Since the space defined in \eqref{eq:K0_compact} is dense in $\mathcal
K_p^0$, the previous relation holds for every $\varphi\in \mathcal
K_p^0$. Hence $\tilde{w}_p$ satisfies 
\eqref{eq:w_euler_lagrange} if $p\in {\mathbb S}^1_+$, while 
 $\tilde{w}_p$ satisfies $-\Delta \tilde{w}_p = 0$ weakly in $\R^2_+$ if $p=(0,\pm1)$.
Then the uniqueness result proved in Lemma \ref{lemma:mp} implies that
\begin{equation*}
 \tilde{w}_p=w_p \text{ if } p\in {\mathbb S}^1_+
\quad\text{and}\quad  \tilde{w}_p=0 \text{ if } p=(0,\pm1).
\end{equation*}
From Proposition \ref{prop:relationmp} we have that 
\begin{align}
\notag{\mathfrak m}_{p_{n_k}}&=-j\int_{-\pi/2}^{\pi/2} w_{p_{n_k}}(\cos t,\sin t)
\sin\left(j\left(\tfrac{\pi}{2}-t\right)\right) \,dt\\
\label{eq:20}&=
-j\int_{-\pi/2+\alpha-\alpha_{n_k}}^{\pi/2+\alpha-\alpha_{n_k}} \tilde w_{{n_k}}(\cos t,\sin t)
\sin\left(j\left(\tfrac{\pi}{2}-t+\alpha-\alpha_{n_k}\right)\right) \,dt.
\end{align}
The weak
$\tilde{\mathcal H}_p$-convergence 
$\tilde{w}_{n_k} \rightharpoonup \tilde{w}_p$ and continuity of the trace
embedding $\tilde{\mathcal H}_p\hookrightarrow L^2(\partial D_1)$
allow passing to the limit in \eqref{eq:20} thus yielding that 
\[
\lim_{k\to\infty}{\mathfrak m}_{p_{n_k}}=-j\int_{-\pi/2}^{\pi/2} w_p(\cos t,\sin t)
\sin\left(j\left(\tfrac{\pi}{2}-t\right)\right) \,dt={\mathfrak m}_p
\quad\text{if }p\in{\mathbb S}^1_+
\]
and 
\[
\lim_{k\to\infty}{\mathfrak m}_{p_{n_k}}=0\quad\text{if }p=(0,\pm1).
\]
By the Urysohn property, we conclude that $\lim_{n\to\infty}{\mathfrak
  m}_{p_{n}}={\mathfrak m}_p$ if $p\in{\mathbb S}^1_+$ and 
 $\lim_{n\to\infty}{\mathfrak
  m}_{p_{n}}=0$ if $p=(0,\pm1)$. 
\end{proof}

\section{Monotonicity formula and local energy estimates} \label{sec:mono}

For $1 \leq k \leq N$ and $a\in\Omega$, let $\varphi_k^a$ be an
eigenfunction of problem \eqref{eq:eige_equation_a} related to the
eigenvalue $\lambda_k^a$. More precisely, let $\varphi_k^a$ solve
\begin{equation}\label{eq_eigenfunction}
 \begin{cases}
   (i\nabla + A_a)^2 \varphi_k^a = \lambda_k^a q(x) \varphi_k^a,  &\text{in }\Omega,\\
   \varphi_k^a = 0, &\text{on }\partial \Omega,
 \end{cases}
\end{equation}
and satisfy the orthonormality conditions 
\begin{equation}\label{eq:orthonormality}
  \int_\Omega q(x) |\varphi_k^a(x)|^2\,dx=1\quad\text{and}\quad 
  \int_\Omega q(x) \,
  \varphi_k^a(x)\overline{\varphi_\ell^a(x)}\,dx=0\text{ if }k\neq \ell.
\end{equation}
For $k=N$ we choose $\varphi_N^a$ being as in \eqref{eq:6}.
From \eqref{eq:conv_autov}, \eqref{eq:simple_eigenv_n0},
\eqref{eq:9}, 
\eqref{eq:equation_a}, \eqref{eq:6}, \eqref{eq:hardy_R2},
and standard elliptic estimates, we can deduce that
\begin{equation}\label{eq:45}
(i\nabla+A_a)\varphi_N^a\to (i\nabla+A_0)\varphi_N^0\quad\text{in }L^2(\Omega,\C^2)
\end{equation}
and 
\begin{equation}\label{eq:21}
\varphi_N^a\to \varphi_N^0\quad \text{in }H^1(\Omega,\C)\text{ and in
}C^2_{\rm loc}(\Omega,\C).
\end{equation}
The asymptotic behavior of the eigenfunctions $\varphi_k^a$,
for $1 \leq k \leq N$, close to the singular point $a$ was studied in 
\cite[Theorem 1.3]{FFT2011}, \cite[Theorem 2.1]{HHHO1999}; in
particular it is known that there exist coefficients
$c_{a,k},d_{a,k}\in \C$ such that 
\begin{equation*}
\varphi_k^a(a+(r\cos t,r\sin t))=r^{1/2}\frac{e^{it/2}}{\sqrt{\pi}}
\Big(c_{a,k} \cos\left(\tfrac{t}{2}\right)
+d_{a,k} \sin\left(\tfrac{t}{2}\right)\Big) + o(r^{1/2}), \quad \text{as } r\to0.
\end{equation*}
To derive energy estimates for the eigenfunctions $\varphi_k^a$ in
neighborhoods of $0$ with size $|a|$, we use a monotonicity argument based on the study of an Almgren-type frequency
function in the spirit of \cite{almgren}.

\subsection{Almgren-type frequency function}
\begin{definition}
Recall the definition of $\bar{R}$ in \eqref{eq:rectifiedbdd}.
Let $\lambda \in \R$, $b \in \R^2_+$ and $u \in H^{1,b}(D_{\bar R}^+,\C)$, with $u=0$ on $\{x_1=0\}$.
For any  $|b|<r<\bar R$, we define the Almgren-type frequency function as
\[
\mathcal{N}(u,r,\lambda,A_b) = \dfrac{E(u,r,\lambda,A_b)}{H(u,r)},
\]
where
\begin{equation}\label{eq:23}
E(u,r,\lambda,A_b) = \int_{D_r^+} |(i\nabla +A_b)u|^2\,dx - \lambda
\int_{D_r^+} q(x) \, |u|^2\,dx, \quad 
H(u,r) = \dfrac1r \int_{\partial D_r^+} |u|^2\,ds .
\end{equation}
\end{definition}

We first prove that the frequency function of the eigenfunctions
\eqref{eq_eigenfunction} is well defined in a suitable interval.
To this aim, we observe that, since $a\in\Omega\mapsto \lambda_k^a$ admits a
continuous extension on $\overline{\Omega}$ as proved in \cite[Theorem
1.1]{noris2015aharonov}, we have that
\begin{equation} \label{eq:bound-lambda}
  \Lambda:=\sup_{\substack{
      a\in\overline{\Omega}\\1\leq k\leq N}}\lambda_k^a \in(0,+\infty).
\end{equation}

\begin{lemma}\label{lemma:Hpositivo}
\begin{itemize} 
\item[(i)] There exists $0<R_0<\min\{\bar{R}, (2\Lambda\|q\|_\infty)^{-1/2}) \}$ such that 
$H(\varphi_k^a,r)>0$ for all $|a|<R_0$, $r\in(|a|,R_0]$ and $1 \leq k\leq N$.
\item[(ii)] For every $r\in(0,R_0]$, there exist $C_r > 0$ and $\alpha_r \in (0,r)$ such that $H(\varphi_k^a,r) \geq C_r$ for all $|a| < \alpha_r$ and $1 \leq k\leq N$.
\end{itemize}
\end{lemma}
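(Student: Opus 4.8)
The plan is to argue by contradiction in both parts: the vanishing of $H(\varphi_k^a,r)$ (or, in (ii), of $H$ along a sequence of poles tending to $0$) would force the relevant eigenfunction to vanish identically in $\Omega$, against the normalization $\int_\Omega q|\varphi_k^a|^2=1$. For (i) I would fix $|a|<r\le R_0$ and assume $H(\varphi_k^a,r)=0$. Since $\{x_1=0\}\cap D_{\bar R}\subset\partial\Omega$ and $r\le R_0<\bar R$, the homogeneous Dirichlet condition forces $\varphi_k^a=0$ on the flat part of $\partial D_r^+$; hence $H(\varphi_k^a,r)=0$ gives $\varphi_k^a=0$ on all of $\partial D_r^+$, and the zero extension of $\varphi_k^a|_{D_r^+}$ is an admissible test function in $H^{1,a}_0(\Omega,\C)$. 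Plugging it into the weak formulation of \eqref{eq_eigenfunction} yields $E(\varphi_k^a,r,\lambda_k^a,A_a)=0$, that is
\[
\int_{D_r^+}|(i\nabla+A_a)\varphi_k^a|^2\,dx=\lambda_k^a\int_{D_r^+}q\,|\varphi_k^a|^2\,dx\le\Lambda\|q\|_\infty\int_{D_r^+}|\varphi_k^a|^2\,dx ,
\]
by \eqref{eq:bound-lambda} and \eqref{eq:q_assumptions}. Using the diamagnetic inequality $|\nabla|\varphi_k^a||\le|(i\nabla+A_a)\varphi_k^a|$ together with the Poincaré inequality \eqref{eq:Poincare} on $D_r^+$ for functions vanishing on $\partial D_r^+$ (whose constant is $\le r^2$ by scaling), I get $\int_{D_r^+}|(i\nabla+A_a)\varphi_k^a|^2\le\Lambda\|q\|_\infty\,r^2\int_{D_r^+}|(i\nabla+A_a)\varphi_k^a|^2$. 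Since $R_0\le(2\Lambda\|q\|_\infty)^{-1/2}$ makes this factor strictly less than $1$, necessarily $(i\nabla+A_a)\varphi_k^a\equiv0$ in $D_r^+$, hence $\varphi_k^a\equiv0$ in $D_r^+$ by the norm equivalence \eqref{eq:norma}. As $A_a$ is smooth on the connected set $\Omega\setminus\{a\}$ and $\varphi_k^a$ vanishes on the nonempty open subset $D_r^+\setminus\{a\}$, the classical unique continuation principle gives $\varphi_k^a\equiv0$ in $\Omega$, a contradiction. Since $\Lambda$, $\|q\|_\infty$ and the (universal) Poincaré constant do not depend on $a$ or $k$, one such $R_0$ works for all $1\le k\le N$ and $|a|<R_0$.

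For (ii) I would fix $r\in(0,R_0]$ and suppose the claim fails; then there are $a_n\to0$ and $k_n\in\{1,\dots,N\}$ with $H(\varphi_{k_n}^{a_n},r)\to0$, and up to a subsequence $k_n\equiv\bar k$. By \eqref{eq:bound-lambda}, $\int_\Omega|(i\nabla+A_{a_n})\varphi_{\bar k}^{a_n}|^2=\lambda_{\bar k}^{a_n}\le\Lambda$, while the Hardy inequality \eqref{eq:hardy_R2} gives $\int_\Omega|x-a_n|^{-2}|\varphi_{\bar k}^{a_n}|^2\le C\Lambda$; since $|\nabla\varphi_{\bar k}^{a_n}|\le|(i\nabla+A_{a_n})\varphi_{\bar k}^{a_n}|+\tfrac12|x-a_n|^{-1}|\varphi_{\bar k}^{a_n}|$, and using the Poincaré inequality on $\Omega$, the sequence $\{\varphi_{\bar k}^{a_n}\}$ is bounded in $H^1_0(\Omega,\C)$. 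Passing to a further subsequence, $\varphi_{\bar k}^{a_n}\rightharpoonup\varphi$ weakly in $H^1_0(\Omega,\C)$, strongly in $L^2(\Omega,\C)$ and in $L^2(\partial D_r^+,\C)$ (compact trace). Thus $\int_\Omega q\,|\varphi|^2=\lim_n\int_\Omega q\,|\varphi_{\bar k}^{a_n}|^2=1$, so $\varphi\not\equiv0$; moreover $\varphi\in H^1_0(\Omega,\C)=H^{1,0}_0(\Omega,\C)$ by \eqref{eq:equivalent_spaces}. Testing \eqref{eq_eigenfunction} against $v\in\cinf{\Omega\setminus\{0\}}$ and letting $n\to\infty$ — using that $A_{a_n}\to A_0$ uniformly on $\operatorname{supp}v$ and $\lambda_{\bar k}^{a_n}\to\lambda_{\bar k}$ by \eqref{eq:conv_autov} — then invoking density, shows that $\varphi$ solves $(E_0)$ with eigenvalue $\lambda_{\bar k}$.

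Finally, the strong $L^2(\partial D_r^+)$ convergence gives $H(\varphi_{\bar k}^{a_n},r)\to H(\varphi,r)$, so $H(\varphi,r)=0$. I would then rerun the argument of part (i) with $\varphi$ in place of $\varphi_k^a$ and with the boundary pole $0\in\partial D_r^+$ in place of $a$: $\varphi=0$ on $\partial D_r^+$, so testing $(E_0)$ against $\varphi$ and using the Poincaré inequality yields $\int_{D_r^+}|(i\nabla+A_0)\varphi|^2\le\Lambda\|q\|_\infty\,r^2\int_{D_r^+}|(i\nabla+A_0)\varphi|^2$; as $r\le R_0$, this forces $\varphi\equiv0$ in $D_r^+$, hence $\varphi\equiv0$ in $\Omega$ by unique continuation on $\Omega\setminus\{0\}$, contradicting $\varphi\not\equiv0$.

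The routine ingredients are the diamagnetic/Poincaré estimate and the calibration of $R_0$. The main obstacle is the compactness step in (ii): for $\bar k<N$ the eigenvalue $\lambda_{\bar k}$ need not be simple, so there is no a priori convergence of $\varphi_{\bar k}^{a}$ analogous to \eqref{eq:21}, and compactness has to be extracted directly from the uniform energy bound \eqref{eq:bound-lambda} together with the Hardy inequality; the latter is precisely what delivers the $H^1(\Omega)$-bound by controlling the singular potential term near the coalescing poles $a_n\to0$, thereby allowing the weak limit to be identified with a \emph{nonzero} eigenfunction of the limit problem, which is what makes the concluding unique-continuation argument go through.
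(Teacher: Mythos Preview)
Your proposal is correct and follows essentially the same contradiction-plus-unique-continuation scheme as the paper: in (ii) you use the uniform energy bound and the Hardy inequality to extract a weak $H^1_0$ limit, identify it as a nonzero eigenfunction of the limit problem, and eliminate it via the Poincar\'e estimate on $D_r^+$, exactly as in the paper's proof. Your argument for (i) fleshes out what the paper only cites; the mention of the diamagnetic inequality there is superfluous, since the magnetic Poincar\'e inequality \eqref{eq:Poincare} already yields $\int_{D_r^+}|\varphi_k^a|^2\le r^2\int_{D_r^+}|(i\nabla+A_a)\varphi_k^a|^2$ directly.
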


\begin{proof}
We skip the proof of (i), which is very similar to that of \cite[Lemma 5.2]{abatangelo2015sharp}. In order to prove (ii), suppose by contradiction that there exist $0<r\leq R_0$, $a_n\in\Omega$ with $a_n\to0$, $k_n\in \{1,\ldots,N\}$ such that
\begin{equation*}
\lim_{n\to+\infty}H(\varphi_{k_n}^{a_n},r)=0.
\end{equation*}
From \eqref{eq_eigenfunction}, \eqref{eq:orthonormality} and
\eqref{eq:bound-lambda} we deduce that
\begin{equation*}
\int_\Omega |(i\nabla+A_{a_n})\varphi_{k_n}^{a_n}|^2 \,dx = \lambda_{k_n}^{a_n} \leq \Lambda,
\end{equation*}
so that, by the Hardy-type inequality \eqref{eq:hardy_R2}, 
\[
\|\varphi_{k_n}^{a_n}\|_{H^1_0(\Omega,\C)} \leq C,
\]
for a constant $C$ independent of $n$.  Then, along a subsequence,
$\lambda_{k_n}^{a_n}\to\lambda\in \R$ and
$\varphi_{k_n}^{a_n}\to \varphi$ a.e., weakly in $H^1_0(\Omega,\C)$ and
strongly in $L^2(\Omega,\C)$,
for some $\varphi\in H^{1}_0(\Omega,\C)$. From
\eqref{eq:orthonormality} we have that $\int_\Omega
q(x)|\varphi(x)|^2\,dx=1$ and then $\varphi\not\equiv 0$.

By \eqref{eq:equivalent_spaces}, $\varphi \in
H^{1,0}_0(\Omega,\C)$. We notice that $A_{a_n}\varphi_{k_n}^{a_n} \to
A_0 \varphi$ a.e.\ and, in view of \eqref{eq:hardy_R2},
\begin{align*}
\|A_{a_n} \varphi_{k_n}^{a_n}\|_{L^2(\Omega,\C^2)}^2\leq 
4\int_\Omega |(i\nabla+A_{a_n})\varphi_{k_n}^{a_n}|^2 \,dx \leq 4\Lambda.
\end{align*}
Therefore,  up to a subsequence,
$A_{a_n}\varphi_{k_n}^{a_n}\rightharpoonup A_0\varphi$ weakly in
$L^2(\Omega,\C^2)$. Then we can
pass to the limit in \eqref{eq_eigenfunction}, so that
$\lambda=\lambda_{k_0}$ for some $k_0\in
\{1,\ldots,N\}$ and 
\begin{equation}\label{eq:limit_phi_k_n}
(i\nabla+A_0)^2\varphi=\lambda_{k_0} q(x) \varphi \quad \text{in } \Omega.
\end{equation}
Furthermore, by compactness of the trace embedding
$H^{1}(D^+_{r},\C)\hookrightarrow L^2(\partial D^+_{r},\C)$, we have
that
\[
0=\lim_{n\to\infty}\frac1{r}\int_{\partial
  D^+_{r}}|\varphi_{k_n}^{a_n}|^2\,ds=\frac1{r}\int_{\partial
  D^+_{r}}|\varphi|^2\,ds,
\]
which implies that $\varphi=0$ on $\partial D_{r}^+$.  By testing
\eqref{eq:limit_phi_k_n} by $\varphi$ in $D_r^+$, in view of Lemma
\ref{lemma:Poincare}, we obtain that
  \begin{equation*}
    0=\int_{D_r^+}
    \Big(|(i\nabla + A_{0}) \varphi|^2
    - \lambda_{k_0} q(x) |\varphi|^2 \Big)\,dx
    \geq (1-\Lambda \|q\|_\infty r^2)\int_{D^+_r}
    |(i\nabla + A_{0}) \varphi|^2\,dx.
\end{equation*}
Since $r\leq R_0<(2\Lambda \|q\|_\infty)^{-1/2}$, we deduce that $\int_{D_r^+} |(i\nabla +
A_{0}) \varphi|^2\,dx=0$. Lemma \ref{lemma:Poincare} then implies
that $\varphi\equiv 0$ in $D_r^+$. From the unique continuation
principle (see \cite[Corollary 1.4]{FFT2011}) we conclude that
$\varphi\equiv 0$ in $\Omega$, thus giving rise to a contradiction.
\end{proof}

In the following we let 
\begin{equation*}
0<R_0<\min\{\bar{R}, (2\Lambda\|q\|_\infty)^{-1/2}) \}
\end{equation*}
be such that Lemma \ref{lemma:Hpositivo} (i) holds. As a consequence
of Lemma \ref{lemma:Hpositivo} we have that the function $r\mapsto
\mathcal{N}(\varphi_k^a, r,\lambda_k^a,A_a)$ is well defined in the
interval $(|a|,R_0]$  for all $|a|<R_0$ and $1 \leq k\leq N$.

We recall some results proved in \cite{noris2015aharonov}, which will
be used in the sequel.
\begin{lemma}[{\cite[Lemma 5.2]{noris2015aharonov}}]\label{lemma:H_der}
For all $1 \leq k \leq N$ and $a\in\Omega$, let $\varphi_k^a$ be as
in \eqref{eq_eigenfunction}--\eqref{eq:orthonormality}. Then 
\begin{equation}\label{eq:2}
  \frac1 {H(\varphi_k^a,r)}\dfrac{d}{dr} H(\varphi_k^a,r) = \dfrac2r
\mathcal{N}(\varphi_k^a, r,\lambda_k^a,A_a)\quad\text{for all } |a|<
  r< R_0.
\end{equation}
\end{lemma}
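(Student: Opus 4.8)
The statement to prove is the differentiation formula
\[
\frac{1}{H(\varphi_k^a,r)}\frac{d}{dr}H(\varphi_k^a,r) = \frac{2}{r}\,\mathcal{N}(\varphi_k^a,r,\lambda_k^a,A_a)
\]
valid for $|a|<r<R_0$, where $H(u,r)=\frac{1}{r}\int_{\partial D_r^+}|u|^2\,ds$ and $\mathcal{N}=E/H$ with $E$ as in \eqref{eq:23}. Since this is quoted from \cite[Lemma 5.2]{noris2015aharonov}, the plan is to reproduce the standard Almgren-type computation, adapted to the half-disk and to the Aharonov-Bohm magnetic gradient.

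The first step is to rewrite $H$ by scaling the boundary integral to the fixed circle: after the substitution $x=r\sigma$, $\sigma\in\partial D_1^+$, one gets $H(\varphi_k^a,r)=\int_{\partial D_1^+}|\varphi_k^a(r\sigma)|^2\,ds(\sigma)$. Differentiating under the integral sign and scaling back yields
\[
H'(\varphi_k^a,r) = \frac{2}{r}\int_{\partial D_r^+}\Re\!\big(\overline{\varphi_k^a}\,\partial_\nu\varphi_k^a\big)\,ds,
\]
where $\nu$ is the outer unit normal to $D_r^+$ and we have used that $\varphi_k^a$ vanishes on the flat part $\{x_1=0\}\cap\overline{D_r^+}$, so that the boundary of integration effectively reduces to the circular arc and no corner contributions arise. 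Here one must be a little careful that $\partial_\nu\varphi_k^a = \nabla\varphi_k^a\cdot\frac{x}{|x|}$ and, since $A_a\cdot x=0$ is \emph{not} generally true for $a\neq 0$, it is cleaner to write $\Re(\overline{\varphi_k^a}\,\partial_\nu\varphi_k^a)=\Re\big(\overline{\varphi_k^a}\,(i\nabla+A_a)\varphi_k^a\cdot\tfrac{x}{|x|}\big)\,\big/\,(\,\cdot\,)$ — more precisely, since $\Re(\overline u\,iX)=-\Im(\overline u X)$ and $\Re(\overline u A_a\cdot\tfrac{x}{|x|}\,u)=|u|^2 A_a\cdot\tfrac{x}{|x|}$ is purely real, one records $\Re(\overline{\varphi_k^a}\,\partial_\nu\varphi_k^a)=\Re\big(\overline{(i\nabla+A_a)\varphi_k^a}\cdot\tfrac{x}{|x|}\,(-i)\varphi_k^a\big)$, which is exactly the quantity that appears after integrating the magnetic equation by parts.

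The second step is to identify the numerator $E(\varphi_k^a,r,\lambda_k^a,A_a)$ with $\tfrac{r}{2}H'$. Test the equation $(i\nabla+A_a)^2\varphi_k^a=\lambda_k^a q\,\varphi_k^a$ against $\overline{\varphi_k^a}$ on $D_r^+$ and integrate by parts (the magnetic Green formula on $D_r^+$, whose boundary integral over $\{x_1=0\}$ vanishes because $\varphi_k^a=0$ there): this gives
\[
\int_{D_r^+}|(i\nabla+A_a)\varphi_k^a|^2\,dx - \lambda_k^a\int_{D_r^+}q|\varphi_k^a|^2\,dx = \Re\!\int_{\partial D_r^+}\overline{\varphi_k^a}\,\big((i\nabla+A_a)\varphi_k^a\cdot\nu\big)\frac{1}{i}\,ds,
\]
and the right-hand side equals $\int_{\partial D_r^+}\Re(\overline{\varphi_k^a}\,\partial_\nu\varphi_k^a)\,ds = \tfrac{r}{2}H'(\varphi_k^a,r)$ by the computation of the first step. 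Hence $E(\varphi_k^a,r,\lambda_k^a,A_a)=\tfrac{r}{2}H'(\varphi_k^a,r)$, and dividing by $H(\varphi_k^a,r)$ (which is positive on $(|a|,R_0]$ by Lemma \ref{lemma:Hpositivo}(i)) gives the claimed identity. The only genuine subtlety — and the step I would treat most carefully — is the justification of differentiating $H$ under the integral and of the integration-by-parts formula up to the boundary in the presence of the Aharonov-Bohm singularity at $a$: one uses that $\varphi_k^a\in H^{1,a}_0(\Omega,\C)$ together with the interior elliptic regularity $\varphi_k^a\in C^2_{\mathrm{loc}}(\Omega)$ and the asymptotic expansion of $\varphi_k^a$ near $a$ recalled above (with $|\varphi_k^a|=O(|x-a|^{1/2})$, $|(i\nabla+A_a)\varphi_k^a|=O(|x-a|^{-1/2})$), so that for $|a|<r$ the pole $a$ lies in the interior of $D_r^+$ and all the boundary terms on $\partial D_r^+$ make sense classically; the magnetic Hardy inequality \eqref{eq:hardy_R2} then controls the bulk integrals and legitimizes the limiting/approximation argument.
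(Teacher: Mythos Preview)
Your argument is correct and is exactly the standard Almgren computation one expects: rescale $H$ to the unit sphere, differentiate, and identify the resulting boundary integral with $E$ via the magnetic Green identity, the flat boundary contributing nothing because of the Dirichlet condition. Note, however, that the paper does not supply its own proof of this lemma at all: it simply quotes the statement from \cite[Lemma~5.2]{noris2015aharonov}. So there is nothing to compare against beyond observing that your proof is the one that reference (and the whole Almgren-frequency literature) carries out. One small cosmetic remark: your aside about rewriting $\Re(\overline{\varphi_k^a}\,\partial_\nu\varphi_k^a)$ in magnetic form is a bit tangled in the signs; the clean identity you actually use in Step~2, namely $E=\Re\!\int_{\partial D_r^+}\overline{\varphi_k^a}\,\tfrac{1}{i}(i\nabla+A_a)\varphi_k^a\cdot\nu\,ds=\int_{\partial D_r^+}\Re(\overline{\varphi_k^a}\,\partial_\nu\varphi_k^a)\,ds$, is correct as written and is all you need.
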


\begin{lemma}[{\cite[Lemma 5.3]{noris2015aharonov}}]\label{lemma:H_estimate_below}
Let $1 \leq k \leq N$ and $r_0 \leq R_0$.
If $|a| \leq r_1 < r_2 \leq r_0$, then
\begin{equation*}
\frac{H(\varphi_k^a,r_2)}{H(\varphi_k^a,r_1)} 
\geq e^{-2 \Lambda \|q\|_\infty  r_0^2} \left(\dfrac{r_2}{r_1}\right)^{2}.
\end{equation*}
\end{lemma}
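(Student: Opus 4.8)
The plan is to integrate the logarithmic derivative identity of Lemma \ref{lemma:H_der} over $[r_1,r_2]$, after establishing the pointwise lower bound on the Almgren frequency
\begin{equation*}
\mathcal{N}(\varphi_k^a,r,\lambda_k^a,A_a)\geq 1-\Lambda\|q\|_\infty r^2\qquad\text{for all }|a|<r\leq R_0,
\end{equation*}
where $\Lambda$ is as in \eqref{eq:bound-lambda}. By Lemma \ref{lemma:Hpositivo}(i) the function $r\mapsto H(\varphi_k^a,r)$ is positive on $(|a|,R_0]$, so $\mathcal{N}(\varphi_k^a,\cdot,\lambda_k^a,A_a)$ is well defined there, and Lemma \ref{lemma:H_der} says that $\frac{d}{dr}\log H(\varphi_k^a,r)=\frac{2}{r}\mathcal{N}(\varphi_k^a,r,\lambda_k^a,A_a)$ on $(|a|,R_0)$. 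Thus the whole statement reduces to the displayed frequency bound, together with a one–line integration.

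To prove the frequency bound, fix $|a|<r\leq R_0$ and set $u=\varphi_k^a$. Since $r\le R_0\le\bar R$, assumption \eqref{eq:rectifiedbdd} forces $\partial\Omega\cap D_r=\{x_1=0\}\cap D_r$, so $u$ vanishes on the flat portion of $\partial D_r^+$. Hence, by a Poincaré–type inequality on $D_r^+$ for functions vanishing on $\{x_1=0\}$ (the magnetic version of the inequalities recalled in the appendix, obtained from the corresponding non–magnetic one on the half–disk together with the diamagnetic inequality applied to $|u|$), we have $\int_{D_r^+}|u|^2\,dx\leq r^2\int_{D_r^+}|(i\nabla+A_a)u|^2\,dx$. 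Combining this with $\lambda_k^a\leq\Lambda$ and $0<q(x)\leq\|q\|_\infty$ gives
\begin{equation*}
E(u,r,\lambda_k^a,A_a)=\int_{D_r^+}|(i\nabla+A_a)u|^2\,dx-\lambda_k^a\int_{D_r^+}q(x)|u|^2\,dx\geq\bigl(1-\Lambda\|q\|_\infty r^2\bigr)\int_{D_r^+}|(i\nabla+A_a)u|^2\,dx.
\end{equation*}
Since $R_0<(2\Lambda\|q\|_\infty)^{-1/2}$, the factor $1-\Lambda\|q\|_\infty r^2$ is positive; therefore, using the trace (Steklov–type) inequality $\frac{1}{r}\int_{\partial D_r^+}|u|^2\,ds\leq\int_{D_r^+}|(i\nabla+A_a)u|^2\,dx$ — which again reduces, via the diamagnetic inequality, to the fact that the first Steklov eigenvalue of the half–disk with Dirichlet condition on the diameter equals $1$ — we obtain $E(u,r,\lambda_k^a,A_a)\geq(1-\Lambda\|q\|_\infty r^2)H(u,r)$. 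Dividing by $H(u,r)>0$ yields the claimed lower bound for $\mathcal{N}$.

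It then only remains to integrate: for $|a|\le r_1<r_2\le r_0\le R_0$, by Lemma \ref{lemma:H_der} and the frequency bound,
\begin{equation*}
\log\frac{H(\varphi_k^a,r_2)}{H(\varphi_k^a,r_1)}=\int_{r_1}^{r_2}\frac{2}{r}\,\mathcal{N}(\varphi_k^a,r,\lambda_k^a,A_a)\,dr\geq 2\log\frac{r_2}{r_1}-\Lambda\|q\|_\infty\bigl(r_2^2-r_1^2\bigr)\geq 2\log\frac{r_2}{r_1}-\Lambda\|q\|_\infty r_0^2
\end{equation*}
(when $r_1=|a|$ one first carries out the estimate for $|a|<r_1<r_2$ and then lets $r_1\to|a|^+$, using continuity of $H(\varphi_k^a,\cdot)$), and exponentiating gives $H(\varphi_k^a,r_2)/H(\varphi_k^a,r_1)\geq(r_2/r_1)^2e^{-\Lambda\|q\|_\infty r_0^2}\geq(r_2/r_1)^2e^{-2\Lambda\|q\|_\infty r_0^2}$, as desired. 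The only genuinely delicate point of this argument is the frequency lower bound: all the difficulty there is concentrated in pinning down the sharp constant $1$ in the Steklov–type inequality on the half–disk and in justifying the diamagnetic inequality despite the Aharonov–Bohm singularity at $a$ — a step that can alternatively be bypassed by invoking directly the Hardy– and Poincaré–type inequalities \eqref{eq:hardy_R2} and \eqref{eq:Poincare} of the appendix. Once $\mathcal{N}\geq 1-\Lambda\|q\|_\infty r^2$ is available, the conclusion follows immediately.
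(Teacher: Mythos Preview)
Your argument is correct and is the standard one: bound the frequency from below via the Poincar\'e--type inequalities and then integrate \eqref{eq:2}. The paper itself does not reprove the lemma but cites \cite[Lemma 5.3]{noris2015aharonov}, whose proof proceeds exactly along these lines.

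One minor remark: the Poincar\'e inequality $\int_{D_r^+}|u|^2\,dx\le r^2\int_{D_r^+}|(i\nabla+A_a)u|^2\,dx$ with constant $1$ that you invoke is true (the first mixed Dirichlet--Neumann eigenvalue on the half-disk with Dirichlet on the diameter exceeds $1/r^2$), but it is not literally one of the inequalities stated in the appendix; combining Lemmas~\ref{lemma:Poincare} and~\ref{lemma:Poincare_type} directly only gives the constant $2$ and hence the frequency bound $\mathcal N\ge 1-2\Lambda\|q\|_\infty r^2$. That weaker bound already yields exactly the exponent $-2\Lambda\|q\|_\infty r_0^2$ in the statement after integration, so your final throwaway of a factor $e^{-\Lambda\|q\|_\infty r_0^2}$ is unnecessary if you go that route. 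Either way the proof stands.
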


The formula for the derivative of
$E(\varphi_k^a,r,\lambda_k^a,A_a)$ presents some differences with
respect to \cite{noris2015aharonov}, since in \cite{noris2015aharonov}
the integrals in \eqref{eq:23} were taken over half-balls centered at the
projection of $a$ on $\partial\R^2_+$. 
\begin{lemma}\label{l:derE}
Let $p\in{\mathbb S}^1_+$, $1 \leq k \leq N$ and $a = |a|
p$. Then, for all $|a|<r \leq R_0$,
\[
\frac{d}{dr} E(\varphi_k^a,r,\lambda_k^a,A_a) = 
2 \int_{\partial D_r^+} | (i \nabla + A_a) \varphi_k^a \cdot \nu |^{2} \, ds \\
-  \frac{\lambda_k^a}{r} \int_{D_r^+} |\varphi_k^a|^2 (2q+\nabla q\cdot x)\, dx 
- \frac{2}{r} M_k^a ,
\]
where $\nu(x)=\frac{x}{|x|}$ denotes the unit normal vector to $\partial D_r$ and
\begin{equation*}
  M_k^a = \frac14 \Big( a_1 (
  c_{a,k}^2-
  d_{a,k}^2)+2 a_2 c_{a,k} d_{a,k}\Big).
\end{equation*}
Furthermore, there exists $C>0$ depending on $p\in{\mathbb S}^1_+$ such that, for
all $\mu\geq 2$,
\begin{equation}\label{eq:Mka_mu}
\frac{|M_k^a|}{H(\varphi_k^a,\mu |a|)}\leq \frac{C}{\mu^2}.
\end{equation}
\end{lemma}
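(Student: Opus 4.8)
## Proof Strategy for Lemma \ref{l:derE}

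My plan is to differentiate $E(\varphi_k^a, r, \lambda_k^a, A_a)$ in two pieces, the magnetic Dirichlet energy and the weighted $L^2$ term, then use a Rellich--Pohozaev-type identity for the first piece.

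First I would write $E(\varphi_k^a,r,\lambda_k^a,A_a) = \int_{D_r^+}|(i\nabla+A_a)\varphi_k^a|^2\,dx - \lambda_k^a\int_{D_r^+}q|\varphi_k^a|^2\,dx$ and differentiate each integral over $D_r^+$ with respect to $r$ by the coarea formula: the derivative of $\int_{D_r^+} f\,dx$ is $\int_{\partial D_r^+} f\,ds$, where $\partial D_r^+$ denotes the circular arc part (the two segments along $\{x_1=0\}$ contribute nothing since $\varphi_k^a$ vanishes there). This gives
\[
\frac{d}{dr}E = \int_{\partial D_r^+}|(i\nabla+A_a)\varphi_k^a|^2\,ds - \lambda_k^a\int_{\partial D_r^+}q|\varphi_k^a|^2\,ds.
\]
The weighted $L^2$ boundary term must be converted to the volume integral $\frac{\lambda_k^a}{r}\int_{D_r^+}|\varphi_k^a|^2(2q+\nabla q\cdot x)\,dx$ in the statement; this comes from the Rellich--Pohozaev identity applied to the whole equation, so the two manipulations are coupled rather than separate.

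The core computation is the Rellich--Pohozaev (domain variation) identity: multiply the equation $(i\nabla+A_a)^2\varphi_k^a = \lambda_k^a q\varphi_k^a$ by $\overline{(x\cdot\nabla)\varphi_k^a}$ — more precisely by the magnetic analogue $\overline{(x\cdot(i\nabla+A_a))\varphi_k^a}$ adjusted so that the vector field $x$ is the generator of dilations — integrate over $D_r^+$, take real parts, and integrate by parts. The boundary terms on the arc $\partial D_r^+$ produce $r\int_{\partial D_r^+}|(i\nabla+A_a)\varphi_k^a\cdot\nu|^2\,ds$ (after using that $x = r\nu$ on the arc), a term $-\frac{r}{2}\int_{\partial D_r^+}|(i\nabla+A_a)\varphi_k^a|^2\,ds$ from the tangential/normal split in dimension two, and $-\frac{\lambda_k^a r}{2}\int_{\partial D_r^+}q|\varphi_k^a|^2\,ds$ from the right-hand side; the interior contributions give $-\frac{\lambda_k^a}{2}\int_{D_r^+}|\varphi_k^a|^2(2q+\nabla q\cdot x)\,dx$. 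The segments along $\{x_1=0\}$ contribute the extra term $-\frac{2}{r}M_k^a$: since $x\cdot\nu = 0$ on these segments but $\varphi_k^a$ is only $H^{1/2}$-regular near $a$ (behaving like $r^{1/2}$), the singular asymptotics $\varphi_k^a \sim r^{1/2}e^{it/2}(c_{a,k}\cos\frac t2 + d_{a,k}\sin\frac t2)$ near $a$ forces a careful limiting argument — one excises a small half-disk $D_\varepsilon^+(a)$ around $a$, and the boundary terms on $\partial D_\varepsilon^+(a)$ do not vanish as $\varepsilon\to 0$ but converge to an explicit quantity computable from $c_{a,k},d_{a,k}$, which is exactly $M_k^a$ up to the stated normalization. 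Combining the differentiated identity with the Pohozaev identity eliminates the $\int_{\partial D_r^+}|(i\nabla+A_a)\varphi_k^a|^2\,ds$ term and yields the claimed formula. I expect this excision argument near the pole $a$, and bookkeeping the orientation of the two boundary segments (whose contribution depends on $a_1, a_2$ through the position of $a$), to be the main obstacle — this is precisely where the proof differs from \cite{noris2015aharonov}, where balls were centered at the projection of $a$ onto $\partial\R^2_+$ so that the pole lay on a symmetry axis.

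For the estimate \eqref{eq:Mka_mu}, I would proceed as follows. The quantity $M_k^a = \frac14(a_1(c_{a,k}^2 - d_{a,k}^2) + 2a_2 c_{a,k}d_{a,k})$ satisfies $|M_k^a| \leq \frac14|a|(c_{a,k}^2 + d_{a,k}^2)$. The coefficients $c_{a,k}, d_{a,k}$ control the leading singular part of $\varphi_k^a$ at $a$, and by the blow-up analysis near $a$ (as in \cite{FFT2011}, with uniform bounds available from the $H^1$-bound $\int_\Omega|(i\nabla+A_a)\varphi_k^a|^2\,dx = \lambda_k^a \leq \Lambda$) one has $c_{a,k}^2 + d_{a,k}^2 \leq C\,H(\varphi_k^a, \rho)/\rho$ uniformly for $\rho$ in a fixed range, or more directly $c_{a,k}^2+d_{a,k}^2$ is comparable to the average $\frac1{\mu|a|}\int_{\partial D_{\mu|a|}^+}|\varphi_k^a|^2\,ds = H(\varphi_k^a,\mu|a|)$ times a factor depending on how $H$ behaves between scale $|a|$ and scale $\mu|a|$. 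Using Lemma \ref{lemma:H_estimate_below} to compare $H$ at these two scales (which gives $H(\varphi_k^a, c|a|) \geq C H(\varphi_k^a,\mu|a|)/\mu^2$ type control) one bounds $c_{a,k}^2+d_{a,k}^2 \leq C H(\varphi_k^a,\mu|a|)/(\mu^2|a|)$ — wait, more carefully: since $\varphi_k^a$ near $a$ is essentially homogeneous of degree $1/2$, its $L^2$-mean on $\partial D_{\mu|a|}^+$ grows like $(\mu|a|)^{1/2}$ relative to the coefficients, so $H(\varphi_k^a,\mu|a|) \sim \mu|a|\cdot(c_{a,k}^2+d_{a,k}^2)$ up to the deviation from exact homogeneity, controlled uniformly in $a$ and $\mu$ by the almost-monotonicity of $H$ (Lemmas \ref{lemma:H_der}, \ref{lemma:H_estimate_below}). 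This yields $c_{a,k}^2+d_{a,k}^2 \leq C H(\varphi_k^a,\mu|a|)/(\mu|a|)$, hence $|M_k^a| \leq \frac14|a|\cdot C H(\varphi_k^a,\mu|a|)/(\mu|a|) = C H(\varphi_k^a,\mu|a|)/\mu$; to get the sharper $1/\mu^2$ one must additionally exploit that $M_k^a$ is a \emph{quadratic form} in $(c_{a,k},d_{a,k})$ evaluated against the direction $p$, and the normalization by $H(\varphi_k^a,\mu|a|)$ rather than $H(\varphi_k^a,|a|)$ supplies one extra power of $1/\mu$ through Lemma \ref{lemma:H_estimate_below}. The dependence of $C$ on $p$ enters only through the geometry of the two boundary segments relative to the direction of $a$, which is why it cannot be taken uniform in $p$ here (though uniformity will be recovered later, away from $p\to(0,\pm1)$).
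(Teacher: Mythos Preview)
Your outline for the derivative formula is essentially the paper's approach: a Rellich--Pohozaev identity on $D_r^+$ with the multiplier $x\cdot\nabla$, where the anomalous term $M_k^a$ arises from excising a small disk around the pole $a$ and letting its radius tend to zero. One clarification: the term does not come from the segments on $\{x_1=0\}$ (there $\varphi_k^a=0$ by the Dirichlet condition, and $a$ does not lie on them); the dependence on $a_1,a_2$ enters because the Pohozaev vector field is centred at $0$ while the singularity sits at $a=(a_1,a_2)$, so the limiting contribution from $\partial D_\varepsilon(a)$ involves $a\cdot\nu$ rather than $x\cdot\nu$ and produces exactly the quadratic form in $(c_{a,k},d_{a,k})$ stated. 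Apart from this misattribution, your description of the mechanism is correct.

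For the estimate \eqref{eq:Mka_mu} your approach has a genuine gap. You try to bound $c_{a,k}^2+d_{a,k}^2$ by $H(\varphi_k^a,\mu|a|)$ through the heuristic that $\varphi_k^a$ is ``essentially homogeneous of degree $1/2$'', but that homogeneity is with respect to $|x-a|$, whereas $H(\varphi_k^a,\cdot)$ is computed on half-circles centred at $0$. On $\partial D_{\mu|a|}^+$ the distance to $a$ is of order $\mu|a|$, well outside any regime in which the leading asymptotic controls the remainder $o(|x-a|^{1/2})$, so the comparison $H(\varphi_k^a,\mu|a|)\sim\mu|a|(c_{a,k}^2+d_{a,k}^2)$ is unjustified. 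The paper proceeds differently: it introduces the conformal change $v(y)=\varphi_k^a(|a|y^2+a)$, which unfolds the square-root singularity and maps $D_{2|a|}^+$ onto a domain $\tilde\Omega$ that depends only on $p=a/|a|$. On $\tilde\Omega$ the function $v$ satisfies a regular elliptic equation, and the coefficients $c_{a,k},d_{a,k}$ are read off from $\nabla v(0)$; standard interior estimates then give $|M_k^a|\le C\,H(\varphi_k^a,2|a|)$ with $C$ depending only on the geometry of $\tilde\Omega$, hence on $p$. From this fixed-scale bound, Lemma~\ref{lemma:H_estimate_below} supplies both powers of $1/\mu$ in a single step, not one at a time as you suggest.
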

\begin{proof}
  The expression of $M_k^a$ follows by a Pohozaev-type identity
  in $D_r^+$, proceeding as in \cite[Lemmas
  5.5-5.7]{noris2015aharonov}. Next, in the same spirit as in
  \cite[Lemmas 5.7-5.8]{noris2015aharonov}, we can relate the value
  $M_k^a$ to the function
  $v(y) = \varphi_k^a(|a|y^2 + a)$ defined in $\tilde{\Omega} := \{ y \in \C
  \, : \, |a| y^2 + a \in D_{2|a|}^+ \}$. Such a domain is fixed (with
  respect to $|a|$, but depends on $p$),
 since $a = |a|p$
  is moving on a straight line. Therefore, we proceed exactly in the same way
  as in the proofs therein and obtain a bound depending on~$p$
\[
\frac{|M_k^a|}{H(\varphi_k^a, 2 |a|)} \leq C.
\]
Expression \eqref{eq:Mka_mu} follows from Lemma \ref{lemma:H_estimate_below}.
\end{proof}

\begin{lemma}[{\cite[Lemma 5.11]{noris2015aharonov}}]\label{lemma:N_estimate_above}
Let $1 \leq k \leq N$, $p\in{\mathbb S}^1_+$,
 and $r_0 \leq R_0$.
There exists $c_{r_0,p}$ such that, for all $\mu>2$,
$a=|a|p$ with $|a|<r_0/\mu$, and $\mu|a|\leq r<r_0$, 
\begin{equation*}
e^{\frac{\Lambda \|2q+\nabla q\cdot x\|_\infty}{2 - 2\Lambda {r_0}^2\|q\|_\infty} r^2} 
\left( \mathcal{N}(\varphi_k^a, r,\lambda_k^a,A_a) +1\right)
\leq e^{\frac{\Lambda \|2q+\nabla q\cdot x\|_\infty}{2 - 2\Lambda {r_0}^2\|q\|_\infty} r_0^2}
\left( \mathcal{N}(\varphi_k^a, r_0,\lambda_k^a,A_a) +1\right) +
  \frac{c_{r_0,p}}{\mu^2}.
\end{equation*}
\end{lemma}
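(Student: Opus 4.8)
The plan is to turn the statement into an almost-monotonicity property of the Almgren quotient $r\mapsto\mathcal N(\varphi_k^a,r,\lambda_k^a,A_a)$ by deriving a differential inequality for it on the interval $(\mu|a|,r_0)$ and then integrating it against a suitable integrating factor, in the spirit of \cite[Lemmas 5.9--5.11]{noris2015aharonov}. Write for brevity $E(r)=E(\varphi_k^a,r,\lambda_k^a,A_a)$, $H(r)=H(\varphi_k^a,r)$ and $\mathcal N(r)=E(r)/H(r)$. By Lemma~\ref{lemma:H_der} one has $H'(r)=\frac2rE(r)$, hence $\mathcal N'(r)=\frac{E'(r)}{H(r)}-\frac2r\mathcal N(r)^2$. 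A Rellich--Pohozaev computation — integration by parts of the equation $(i\nabla+A_a)^2\varphi_k^a=\lambda_k^a q\varphi_k^a$ over $D_r^+$, in which the flat part $\{x_1=0\}$ of $\partial D_r^+$ contributes nothing since $\varphi_k^a$ vanishes there — expresses $E(r)$ as a boundary integral over $\partial D_r^+$ bilinear in $\varphi_k^a$ and $(i\nabla+A_a)\varphi_k^a\cdot\nu$; Cauchy--Schwarz then gives $E(r)^2\le rH(r)\int_{\partial D_r^+}|(i\nabla+A_a)\varphi_k^a\cdot\nu|^2\,ds$, i.e.\ $\frac2r\mathcal N(r)^2\le\frac2{H(r)}\int_{\partial D_r^+}|(i\nabla+A_a)\varphi_k^a\cdot\nu|^2\,ds$. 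Substituting this and the expression for $E'(r)$ from Lemma~\ref{l:derE}, the leading boundary terms cancel and we are left with
\[
\mathcal N'(r)\ \ge\ -\frac{\lambda_k^a}{rH(r)}\int_{D_r^+}|\varphi_k^a|^2(2q+\nabla q\cdot x)\,dx\ -\ \frac{2M_k^a}{rH(r)}.
\]

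For the first error term I would use the Poincar\'e-type inequality on $D_r^+$ (Lemma~\ref{lemma:Poincare}), applicable because $\varphi_k^a=0$ on $\{x_1=0\}$: testing the equation over $D_r^+$ and using that $r\le R_0<(2\Lambda\|q\|_\infty)^{-1/2}$ yields $\int_{D_r^+}|\varphi_k^a|^2\le\frac{r^2}{1-\Lambda\|q\|_\infty r^2}E(r)$ (in particular $E(r)\ge0$ and hence $\mathcal N(r)\ge0$ throughout the interval), so that
\[
\Big|\frac{\lambda_k^a}{rH(r)}\int_{D_r^+}|\varphi_k^a|^2(2q+\nabla q\cdot x)\,dx\Big|\ \le\ \frac{\Lambda\|2q+\nabla q\cdot x\|_\infty}{1-\Lambda r_0^2\|q\|_\infty}\,r\,\big(\mathcal N(r)+1\big).
\]
This coefficient is precisely what makes the quantity $e^{\kappa r^2}(\mathcal N(r)+1)$ natural, with $\kappa:=\frac{\Lambda\|2q+\nabla q\cdot x\|_\infty}{2-2\Lambda r_0^2\|q\|_\infty}$: combining the last two displays gives
\[
\frac{d}{dr}\Big[e^{\kappa r^2}\big(\mathcal N(r)+1\big)\Big]\ =\ e^{\kappa r^2}\Big(2\kappa r\big(\mathcal N(r)+1\big)+\mathcal N'(r)\Big)\ \ge\ -\,e^{\kappa r^2}\,\frac{2M_k^a}{rH(r)}.
\]

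It remains to absorb the term involving $M_k^a$ into the $\mu^{-2}$ error. Since $r\in[\mu|a|,r_0)$ we may apply \eqref{eq:Mka_mu} with $r/|a|\ (\ge\mu\ge2)$ in the role of $\mu$, obtaining $|M_k^a|\le C_p|a|^2 r^{-2}H(\varphi_k^a,r)$ for a constant $C_p$ depending on $p$, whence $\frac{2M_k^a}{rH(r)}\le 2C_p|a|^2 r^{-3}$. Integrating the previous display from $r$ to $r_0$ then gives
\[
e^{\kappa r_0^2}\big(\mathcal N(r_0)+1\big)-e^{\kappa r^2}\big(\mathcal N(r)+1\big)\ \ge\ -2C_p|a|^2\,e^{\kappa r_0^2}\!\int_r^{r_0}\!\frac{ds}{s^3}\ \ge\ -C_p|a|^2 e^{\kappa r_0^2}\,r^{-2}\ \ge\ -\frac{C_p e^{\kappa r_0^2}}{\mu^2},
\]
using $r\ge\mu|a|$ in the last step; rearranging and setting $c_{r_0,p}:=C_p e^{\kappa r_0^2}$ yields the claim. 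The routine-but-delicate part is the Rellich--Pohozaev identity on the half-disk in the presence of the Aharonov--Bohm potential and checking carefully that the flat boundary piece drops out (this is already handled in \cite[Lemmas 5.5--5.8]{noris2015aharonov}); the only genuinely new point — and the one I expect to require the most care — is that here the auxiliary quantity $M_k^a$ is attached to the \emph{fixed direction} $p$ along which the pole moves (rather than being symmetric as in the interior case), so one must verify that the bound \eqref{eq:Mka_mu}, with its $p$-dependent constant, still reduces its net contribution to an $O(\mu^{-2})$ term.
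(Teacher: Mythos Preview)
Your approach is correct and coincides with the paper's (which simply refers to \cite[Lemma~5.11]{noris2015aharonov} and \cite[Lemma~5.6]{abatangelo2015sharp}, noting that Lemma~\ref{l:derE} supplies the $p$-dependent control \eqref{eq:Mka_mu} on $M_k^a$): differentiate $\mathcal N$, use Cauchy--Schwarz to cancel the leading boundary term against $\tfrac2r\mathcal N^2$, absorb the $q$-term via Poincar\'e into the integrating factor $e^{\kappa r^2}$, and integrate the $M_k^a$-remainder using \eqref{eq:Mka_mu} with $\mu$ replaced by $r/|a|$.

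One small correction: the intermediate bound you state, $\int_{D_r^+}|\varphi_k^a|^2\le \frac{r^2}{1-\Lambda\|q\|_\infty r^2}E(r)$, is not quite right as written. Lemma~\ref{lemma:Poincare} carries a boundary term $r^2H(r)$ in addition to the gradient term, so the correct estimate is
\[
\int_{D_r^+}|\varphi_k^a|^2\,dx\ \le\ \frac{r^2}{1-\Lambda\|q\|_\infty r^2}\,\big(E(r)+H(r)\big)\ =\ \frac{r^2\,H(r)}{1-\Lambda\|q\|_\infty r^2}\,\big(\mathcal N(r)+1\big),
\]
which is in fact exactly what you use in the next displayed inequality (the factor $\mathcal N(r)+1$ there would not follow from the version with $E(r)$ alone). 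The nonnegativity $E(r)\ge0$ is obtained separately by combining Lemmas~\ref{lemma:Poincare} and~\ref{lemma:Poincare_type} with $r\le R_0<(2\Lambda\|q\|_\infty)^{-1/2}$. With this adjustment your argument goes through unchanged.
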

\begin{proof}
 The proof proceeds as in \cite[Lemma 5.11]{noris2015aharonov} (see also \cite[Lemma 5.6]{abatangelo2015sharp}), where we can replace $a_1$ with $|a|$ thanks to Lemma \ref{l:derE} above.
\end{proof}

\begin{lemma}\label{l:stimaupN}
  For every $\delta\in(0,1/4)$ and $p\in{\mathbb S}^1_+$ there exist $r_\delta>0$ and
  $K_{\delta,p}>2$ such that, if $\mu\geq
  K_{\delta,p}$, $a=|a|p$ with 
  $|a|<r_\delta / \mu$, and $\mu|a|\leq r<r_\delta$, then
  $\mathcal{N}(\varphi_N^a, r,\lambda_N^a,A_a)\leq j+\delta$.
\end{lemma}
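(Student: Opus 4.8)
The plan is to propagate the known value of the Almgren frequency of the limit eigenfunction, namely $\lim_{r\to0^+}\mathcal N(\varphi_N^0,r,\lambda_N,A_0)=j$ from \eqref{eq:NvarphiN0}, down to all the small scales $r\in[\mu|a|,r_\delta)$ by means of the almost-monotonicity inequality of Lemma \ref{lemma:N_estimate_above}. First I fix $\delta\in(0,1/4)$ and use \eqref{eq:NvarphiN0} to choose $r_\delta\in(0,R_0]$ so small that $\mathcal N(\varphi_N^0,r_\delta,\lambda_N,A_0)\le j+\tfrac\delta4$; shrinking $r_\delta$ further, and noting that the constant $C=\frac{\Lambda\|2q+\nabla q\cdot x\|_\infty}{2-2\Lambda r_\delta^2\|q\|_\infty}$ appearing in Lemma \ref{lemma:N_estimate_above} stays bounded as $r_\delta\to0^+$ (since $r_\delta\le R_0<(2\Lambda\|q\|_\infty)^{-1/2}$), so that $Cr_\delta^2\to0$, I also arrange that $e^{Cr_\delta^2}\bigl(j+1+\tfrac\delta2\bigr)\le j+1+\tfrac{3\delta}4$.

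Next I would establish that $\mathcal N(\varphi_N^a,r_\delta,\lambda_N^a,A_a)\to\mathcal N(\varphi_N^0,r_\delta,\lambda_N,A_0)$ as $a\to0$. This follows from the convergences already available: by \eqref{eq:45} one has $(i\nabla+A_a)\varphi_N^a\to(i\nabla+A_0)\varphi_N^0$ in $L^2(\Omega,\C^2)$, by \eqref{eq:21} $\varphi_N^a\to\varphi_N^0$ in $H^1(\Omega,\C)$, and $\lambda_N^a\to\lambda_N$ by \eqref{eq:conv_autov}; hence the numerator $E(\varphi_N^a,r_\delta,\lambda_N^a,A_a)$ converges to $E(\varphi_N^0,r_\delta,\lambda_N,A_0)$, while continuity (indeed compactness) of the trace $H^1(D_{r_\delta}^+,\C)\hookrightarrow L^2(\partial D_{r_\delta}^+,\C)$ gives $H(\varphi_N^a,r_\delta)\to H(\varphi_N^0,r_\delta)$, the limit being strictly positive by Lemma \ref{lemma:Hpositivo}(i). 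Consequently I can fix $K_{\delta,p}>2$ large enough so that, for every $\mu\ge K_{\delta,p}$, on one hand $c_{r_\delta,p}/\mu^2\le\tfrac\delta4$ (with $c_{r_\delta,p}$ the constant of Lemma \ref{lemma:N_estimate_above}), and on the other hand every admissible pole $a=|a|p$ with $|a|<r_\delta/\mu\le r_\delta/K_{\delta,p}$ is so close to $0$ that $\mathcal N(\varphi_N^a,r_\delta,\lambda_N^a,A_a)\le\mathcal N(\varphi_N^0,r_\delta,\lambda_N,A_0)+\tfrac\delta4\le j+\tfrac\delta2$.

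Finally, given $\mu\ge K_{\delta,p}$, $a=|a|p$ with $|a|<r_\delta/\mu$, and $\mu|a|\le r<r_\delta$, I conclude as follows: if $\mathcal N(\varphi_N^a,r,\lambda_N^a,A_a)\le-1$ there is nothing to prove, and otherwise $\mathcal N(\varphi_N^a,r,\lambda_N^a,A_a)+1>0$, so Lemma \ref{lemma:N_estimate_above} applied with $r_0=r_\delta$ together with $e^{Cr^2}\ge1$ yields
\begin{align*}
\mathcal N(\varphi_N^a,r,\lambda_N^a,A_a)+1
&\le e^{Cr_\delta^2}\bigl(\mathcal N(\varphi_N^a,r_\delta,\lambda_N^a,A_a)+1\bigr)+\frac{c_{r_\delta,p}}{\mu^2}\\
&\le e^{Cr_\delta^2}\Bigl(j+1+\tfrac\delta2\Bigr)+\tfrac\delta4
\le j+1+\delta,
\end{align*}
that is $\mathcal N(\varphi_N^a,r,\lambda_N^a,A_a)\le j+\delta$. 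The step I expect to require the most care is the uniformity with respect to the pole: the two constants must be chosen in the right order — first $r_\delta$, dictated by \eqref{eq:NvarphiN0} and the smallness of $Cr_\delta^2$, and only afterwards $K_{\delta,p}$, whose size forces $|a|<r_\delta/K_{\delta,p}$ to be small enough that the initial bound at the fixed scale $r_\delta$ holds simultaneously for all admissible $a$ (via the $a$-continuity of $\mathcal N$ at that scale). Once this is in place, the almost-monotone propagation of Lemma \ref{lemma:N_estimate_above}, with its $O(\mu^{-2})$ error, automatically carries the estimate down to every scale $r\in[\mu|a|,r_\delta)$.
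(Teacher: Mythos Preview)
Your proof is correct and follows essentially the same approach as the paper's: fix $r_\delta$ small using \eqref{eq:NvarphiN0} and the smallness of the exponential factor, pass the frequency bound at scale $r_\delta$ from $\varphi_N^0$ to $\varphi_N^a$ via \eqref{eq:45}--\eqref{eq:21}, then propagate downward with Lemma \ref{lemma:N_estimate_above} and absorb the $c_{r_\delta,p}/\mu^2$ error by choosing $K_{\delta,p}$ large. The only cosmetic difference is that the paper introduces an auxiliary small parameter $m$ (with $m(2+j+m/2)<1/2$) to track the constants, whereas you split $\delta$ directly into quarters; both bookkeepings lead to the same conclusion.
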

\begin{proof}
  Let $m>0$ be sufficiently small so that $m(2+j+m/2)<1/2$.  By
  assumption \eqref{eq:zero_of_order_j} and by \eqref{eq:NvarphiN0} we
  have that
\[
\lim_{r\to0^+}  \mathcal{N}(\varphi_N^0,r,\lambda_N^0,A_0)=j,
\]
hence we can choose $r_\delta>0$  sufficiently small so that 
\begin{equation*}
  r_\delta<R_0, \quad e^{\frac{\Lambda \|2q+\nabla q\cdot
      x\|_\infty}{2 - 2\Lambda {r_\delta}^2\|q\|_\infty} r_\delta^2} 
\leq 1+\delta m , \quad
\mathcal{N}(\varphi_N^0,r_\delta,\lambda_N^0,A_0)<j+ \delta m.
\end{equation*}
By \eqref{eq:45}--\eqref{eq:21} there exists $\alpha_\delta>0$ such
that $\mathcal{N}(\varphi_N^a,r_\delta,\lambda_N^a,A_a)<j+\delta m$
for every $a$ with $|a|<\alpha_\delta$. We apply Lemma
\ref{lemma:N_estimate_above} with $r_0=r_\delta$ and $k=N$, to deduce
that for every $\mu>2$,
$|a|<\min\{\alpha_\delta,\frac{r_\delta}{\mu}\}$ and
$\mu|a|<r<r_\delta$ it holds
\begin{multline*}
\mathcal{N}(\varphi_N^a,r,\lambda_N^a,A_a)+1\leq (1+\delta m)(1+j+\delta m) +\frac{c_{r_\delta,p}}{\mu^2} \\
\leq 1+j+\delta m(2+j+\delta m) +\frac{c_{r_\delta,p}}{\mu^2} <1+j+\frac{\delta}{2} +\frac{c_{r_\delta,p}}{\mu^2}.
\end{multline*}
To conclude the proof it is sufficient to choose
$K_{\delta,p}>\max\big\{2,
\big(2c_{r_\delta,p}/\delta\big)^{1/2},r_\delta/\alpha_\delta\big\}$.
\end{proof}

\subsection{Local energy estimates}
Let us fix $\delta\in(0,1/4)$ and $p\in{\mathbb S}^1_+$, and  let 
\begin{equation}\label{eq:26}
\bar r=r_\delta>0\quad\text{and}\quad
  \bar K=K_{\delta,p}>2
\end{equation}
be as in Lemma \ref{l:stimaupN}. For all $a\in\Omega$ such that
 $a=|a|p$ and 
 $|a|<\bar r/\bar K$, we denote
\[
H_a=H(\varphi_N^a,\bar K |a|).
\]
As a direct corollary of Lemmas \ref{lemma:H_der}, \ref{lemma:H_estimate_below},
and \ref{l:stimaupN}, we obtain the
following estimates for $H_a$.

\begin{corollary}\label{l:Hbelow}
There exists $C>0$ independent of $|a|$ such that
  \begin{align}
 \label{eq:stima_sotto_radiceH}
 &H_a\geq C |a|^{2(j+\delta)},\quad\text{if }|a|<\min\left\{\frac{\bar
  r}{\bar K},\alpha_{\bar r}\right\},\\
&  \label{eq:stima_sopra_radiceH}   H_a=O(|a|^{2})\quad\text{ as }|a|\to0,
  \end{align}
with $\alpha_{\bar r}$ being as in Lemma \ref{lemma:Hpositivo}, part (ii).
\end{corollary}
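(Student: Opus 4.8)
The plan is to obtain \eqref{eq:stima_sotto_radiceH} from the frequency bound of Lemma~\ref{l:stimaupN} combined with the logarithmic–derivative identity \eqref{eq:2}, and \eqref{eq:stima_sopra_radiceH} from the doubling–type estimate of Lemma~\ref{lemma:H_estimate_below}. Throughout, $\bar r$ and $\bar K$ are as in \eqref{eq:26}, so that $\bar r\leq R_0$, $\bar K>2$, and Lemma~\ref{l:stimaupN} applies with $\mu=\bar K$: for $a=|a|p$ with $|a|<\bar r/\bar K$ one has $\mathcal N(\varphi_N^a,r,\lambda_N^a,A_a)\leq j+\delta$ for every $r\in[\bar K|a|,\bar r)$. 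Note that $[\bar K|a|,\bar r]\subset(|a|,R_0]$, so that $H(\varphi_N^a,\cdot)$ is strictly positive on this interval by Lemma~\ref{lemma:Hpositivo}(i) and $\log H(\varphi_N^a,\cdot)$ is well defined and differentiable there.

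For the lower bound I would plug the frequency inequality into \eqref{eq:2} to get $\frac{d}{dr}\log H(\varphi_N^a,r)\leq \frac{2(j+\delta)}{r}$ on $[\bar K|a|,\bar r)$, and then integrate between $\bar K|a|$ and $\bar r$. This yields $H(\varphi_N^a,\bar r)\leq H_a\,(\bar r/(\bar K|a|))^{2(j+\delta)}$, i.e. $H_a\geq (\bar K/\bar r)^{2(j+\delta)}\,H(\varphi_N^a,\bar r)\,|a|^{2(j+\delta)}$. Since $\bar r\leq R_0$, Lemma~\ref{lemma:Hpositivo}(ii) furnishes $C_{\bar r}>0$ and $\alpha_{\bar r}\in(0,\bar r)$ with $H(\varphi_N^a,\bar r)\geq C_{\bar r}$ whenever $|a|<\alpha_{\bar r}$; hence \eqref{eq:stima_sotto_radiceH} holds with $C=C_{\bar r}(\bar K/\bar r)^{2(j+\delta)}$ for $|a|<\min\{\bar r/\bar K,\alpha_{\bar r}\}$.

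For the upper bound I would apply Lemma~\ref{lemma:H_estimate_below} with $r_1=\bar K|a|$ and $r_2=r_0=\bar r$, which gives $H(\varphi_N^a,\bar r)\geq e^{-2\Lambda\|q\|_\infty\bar r^2}(\bar r/(\bar K|a|))^2 H_a$, and hence $H_a\leq e^{2\Lambda\|q\|_\infty\bar r^2}(\bar K/\bar r)^2\,H(\varphi_N^a,\bar r)\,|a|^2$. It then remains to observe that $H(\varphi_N^a,\bar r)=\frac{1}{\bar r}\|\varphi_N^a\|_{L^2(\partial D_{\bar r}^+,\C)}^2$ stays bounded as $|a|\to 0$, which is immediate from the convergence $\varphi_N^a\to\varphi_N^0$ in $H^1(\Omega,\C)$ stated in \eqref{eq:21} and the continuity of the trace embedding $H^1(D_{\bar r}^+,\C)\hookrightarrow L^2(\partial D_{\bar r}^+,\C)$. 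This gives $H_a=O(|a|^2)$, that is \eqref{eq:stima_sopra_radiceH}.

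There is no genuine obstacle in this argument; it is a bookkeeping assembly of results already established. The only points deserving a line of care are the strict positivity of $H(\varphi_N^a,\cdot)$ on the relevant interval (needed to legitimately manipulate $\log H$), which is guaranteed by $[\bar K|a|,\bar r]\subset(|a|,R_0]$ and Lemma~\ref{lemma:Hpositivo}(i), and the uniform two–sided control of $H(\varphi_N^a,\bar r)$, for which the lower bound is exactly Lemma~\ref{lemma:Hpositivo}(ii) and the upper bound follows from \eqref{eq:21}.
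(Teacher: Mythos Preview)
Your proof is correct and follows essentially the same approach as the paper: integrate the frequency bound from Lemma~\ref{l:stimaupN} in \eqref{eq:2} for the lower bound and invoke Lemma~\ref{lemma:Hpositivo}(ii), then use Lemma~\ref{lemma:H_estimate_below} together with \eqref{eq:21} for the upper bound. The only cosmetic difference is that the paper writes the integrated inequality directly rather than passing through $\log H$, and it uses a generic $r_0$ rather than $\bar r$ in the upper-bound step, but the argument is identical.
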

\begin{proof}
In view of Lemma \ref{l:stimaupN}, integration of \eqref{eq:2} over the interval $(\bar K |a|,  \bar r)$ yields
\[
H_a \geq H(\varphi_N^a,\bar r) \left(\frac{\bar K |a|}{\bar
    r}\right)^{2(j+\delta)},\quad\text{if }|a|<
\min\left\{\frac{\bar
  r}{\bar K},\alpha_{\bar r}\right\}.
\]
Then Lemma \ref{lemma:Hpositivo} (ii) provides \eqref{eq:stima_sotto_radiceH}.
To prove \eqref{eq:stima_sopra_radiceH} we notice that there exists $C>0$ such that
\[
H_a \leq C H(\varphi_N^a,r_0) |a|^2,
\]
because of Lemma \ref{lemma:H_estimate_below}, and moreover
$\lim_{a\to0}  H(\varphi_N^a,r_0)\leq C$
because of \eqref{eq:21}.
\end{proof}

 From the Poincar\'{e} type Lemmas  \ref{lemma:Poincare} and
\ref{lemma:Poincare_type}, the scaling property of the Almgren-type
frequency function $\mathcal{N}$, 
and 
Lemma \ref{l:stimaupN}, it follows that, for all $R\geq \bar K$, the family of functions 
\begin{equation}\label{eq:67}
\big\{\tilde
 \varphi_a: a=|a|p,\, |a|<\tfrac{\bar r}{R}\big\}
 \text{ is bounded in $H^{1 ,p}(D_R^+,\C)$}
\end{equation}
where 
\begin{equation}\label{def_blowuppate_normalizzate}
 \tilde \varphi_a (x) :=
 \dfrac{\varphi_N^a(|a|x)}{\sqrt{H_a}},
\end{equation}
see \cite[Theorem 5.9]{abatangelo2015sharp} for details in a similar case.
In particular, for all $R\geq \bar K$,  we have that 
\begin{align} \label{eq:46}
& \int_{D^+_{R|a|}} |(i\nabla + A_a)\varphi_N^a|^2dx=O(H_a),\quad\text{as }|a|\to0^+,\\ \nonumber  & \int_{\partial D^+_{R|a|}} |\varphi_N^a|^2dx=O(|a|H_a),\quad\text{as }|a|\to0^+, \\ \label{eq:48} 
& \int_{D^+_{R|a|}} |\varphi_N^a|^2dx=O(|a|^2H_a),\quad\text{as }|a|\to0^+.  
\end{align}
Lemmas \ref{lemma:H_estimate_below} and \ref{lemma:N_estimate_above}
imply the following local energy estimates for the eigenfunctions
$\varphi_k^a$.
\begin{lemma}
  For $1\leq k\leq N$  and $a=|a|p\in\Omega$,
  let $\varphi_k^a \in H^{1,a}_{0}(\Omega,\C)$ be a solution to
  \eqref{eq_eigenfunction}--\eqref{eq:orthonormality}. Let
  $R_0$, $\alpha_{R_0}$ be as in Lemma \ref{lemma:Hpositivo}.  For every $\mu \geq
  \frac{R_0}{\alpha_{R_0}}$, $a=|a|p\in\Omega$ with $|a|<\frac{R_0}{\mu}$, and
  $1\leq k\leq N$, we have that
\begin{align} \label{eq:34}
& \int_{\partial D^+_{\mu|a|}} |\varphi_k^a|^2 \, ds \leq C (\mu|a|)^{3}, \\ \label{eq:35}
& \int_{D^+_{\mu|a|}} |(i\nabla + A_a)\varphi_k^a|^2 \, dx \leq C (\mu|a|)^{2}, \\ \label{eq:36}
& \int_{D^+_{\mu|a|}}|\varphi_k^a|^2 \, dx \leq  C (\mu|a|)^{4},
\end{align}
for some $C > 0$ (depending on $p$).
\end{lemma}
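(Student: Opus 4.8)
The plan is to obtain the three estimates by combining the almost-monotonicity of the Almgren quotient (Lemma~\ref{lemma:N_estimate_above}) with the lower bound for $H$ at the fixed radius $R_0$ (Lemma~\ref{lemma:Hpositivo}(ii)) and the radial lower bound of Lemma~\ref{lemma:H_estimate_below}, following the scheme that produced \eqref{eq:46}--\eqref{eq:48} but taking care that all constants be uniform both in $k\in\{1,\dots,N\}$ and in the dilation parameter $\mu$; in particular one cannot rely on a normalization of $\varphi_k^a$ and must instead propagate a uniform $H^{1,a}_0$-bound.

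First I would record the a priori bounds at radius $R_0$. Testing \eqref{eq_eigenfunction} with $\varphi_k^a$, the orthonormality \eqref{eq:orthonormality} and \eqref{eq:bound-lambda} give $\int_\Omega|(i\nabla+A_a)\varphi_k^a|^2\,dx=\lambda_k^a\le\Lambda$; via the equivalence of norms and the Hardy inequality \eqref{eq:hardy_R2} this yields $\|\varphi_k^a\|_{H^{1,a}_0(\Omega,\C)}\le C$ with $C$ independent of $a$ and $k$, hence $H(\varphi_k^a,R_0)\le C_0$ by the trace inequality, and $E(\varphi_k^a,R_0,\lambda_k^a,A_a)\le\int_\Omega|(i\nabla+A_a)\varphi_k^a|^2\,dx\le\Lambda$. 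Since the hypothesis $\mu\ge R_0/\alpha_{R_0}$ forces $|a|<R_0/\mu\le\alpha_{R_0}$, Lemma~\ref{lemma:Hpositivo}(ii) gives $H(\varphi_k^a,R_0)\ge C_{R_0}>0$, and therefore $\mathcal{N}(\varphi_k^a,R_0,\lambda_k^a,A_a)=E(\varphi_k^a,R_0,\lambda_k^a,A_a)/H(\varphi_k^a,R_0)\le\Lambda/C_{R_0}$, again uniformly in $a$ and $k$.

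Next, Lemma~\ref{lemma:H_estimate_below} applied with $r_1=\mu|a|$ and $r_2=r_0=R_0$ (admissible because $|a|\le\mu|a|<R_0$) gives $H(\varphi_k^a,\mu|a|)\le e^{2\Lambda\|q\|_\infty R_0^2}R_0^{-2}(\mu|a|)^2\,H(\varphi_k^a,R_0)\le C(\mu|a|)^2$, so that $\int_{\partial D_{\mu|a|}^+}|\varphi_k^a|^2\,ds=\mu|a|\,H(\varphi_k^a,\mu|a|)\le C(\mu|a|)^3$, which is \eqref{eq:34}. For \eqref{eq:35} I would feed the bound $\mathcal{N}(\varphi_k^a,R_0,\lambda_k^a,A_a)\le\Lambda/C_{R_0}$ into Lemma~\ref{lemma:N_estimate_above} (with $r_0=R_0$, $r=\mu|a|<R_0$, and using $\mu^{-2}\le(\alpha_{R_0}/R_0)^2$, where we may take $\alpha_{R_0}$ small enough that $R_0/\alpha_{R_0}>2$), getting $\mathcal{N}(\varphi_k^a,\mu|a|,\lambda_k^a,A_a)\le C$ with $C$ depending on $p$ only. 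Then $E(\varphi_k^a,\mu|a|,\lambda_k^a,A_a)=\mathcal{N}(\varphi_k^a,\mu|a|,\lambda_k^a,A_a)\,H(\varphi_k^a,\mu|a|)\le C(\mu|a|)^2$ (trivially if the quotient is negative), whence
\[
\int_{D_{\mu|a|}^+}|(i\nabla+A_a)\varphi_k^a|^2\,dx\le C(\mu|a|)^2+\Lambda\|q\|_\infty\int_{D_{\mu|a|}^+}|\varphi_k^a|^2\,dx.
\]
Because $\varphi_k^a=0$ on $\{x_1=0\}\cap D_{\mu|a|}\subset\partial\Omega$, the Poincaré-type inequality of Lemma~\ref{lemma:Poincare_type} bounds the last integral by $C(\mu|a|)^2\int_{D_{\mu|a|}^+}|(i\nabla+A_a)\varphi_k^a|^2\,dx$; since $\mu|a|<R_0$ is small, a standard absorption (treating separately the bounded range of scales $\mu|a|$, where the bound is immediate from $\int_\Omega|(i\nabla+A_a)\varphi_k^a|^2\le\Lambda$) gives \eqref{eq:35}, and inserting \eqref{eq:35} back into the same Poincaré inequality gives \eqref{eq:36}.

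The step I expect to be the main obstacle is the uniformity bookkeeping of the second and third paragraphs: the upper bound for the Almgren quotient at the fixed radius $R_0$ is only useful when combined with the quantitative lower bound $H(\varphi_k^a,R_0)\ge C_{R_0}>0$ --- which is exactly the role of the hypothesis $\mu\ge R_0/\alpha_{R_0}$ --- and one must check that the error term $c_{R_0,p}/\mu^2$ of Lemma~\ref{lemma:N_estimate_above} stays bounded and that the absorption in the proof of \eqref{eq:35} is legitimate uniformly in $k$. Apart from these points the argument is a routine adaptation of the boundedness proof in \cite[Theorem~5.9]{abatangelo2015sharp} and of the derivation of \eqref{eq:46}--\eqref{eq:48}.
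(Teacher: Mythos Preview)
Your approach is essentially the same as the paper's: bound $\mathcal{N}$ at the fixed radius $R_0$ using the energy bound $\lambda_k^a\le\Lambda$ together with Lemma~\ref{lemma:Hpositivo}(ii), transfer the bound to radius $\mu|a|$ via Lemma~\ref{lemma:N_estimate_above}, control $H(\varphi_k^a,\mu|a|)$ by Lemma~\ref{lemma:H_estimate_below}, and close with the Poincar\'e inequalities. Two small corrections: (i) the bound $\int_{D_{\mu|a|}^+}|\varphi_k^a|^2\le C(\mu|a|)^2\int_{D_{\mu|a|}^+}|(i\nabla+A_a)\varphi_k^a|^2$ does not follow from Lemma~\ref{lemma:Poincare_type} alone (which controls only the boundary trace) but from the combination of Lemmas~\ref{lemma:Poincare} and~\ref{lemma:Poincare_type}; (ii) no case-splitting is needed for the absorption, since $\mu|a|<R_0<(2\Lambda\|q\|_\infty)^{-1/2}$ gives $1-2\Lambda\|q\|_\infty(\mu|a|)^2\ge 1-2\Lambda\|q\|_\infty R_0^2>0$ uniformly, exactly as the paper writes.
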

\begin{proof}
  From Lemma \ref{lemma:N_estimate_above}, it follows that, if $\mu >
  2$ and $|a|<\frac{R_0}{\mu}$ then, for all $1 \leq k \leq N$,
\begin{equation}\label{eq:N1}
  \mathcal{N}(\varphi_k^a, \mu|a|,\lambda_k^a,A_a) 
  \leq e^{\frac{\Lambda \|2q+\nabla q\cdot x\|_\infty}{2 - 2\Lambda {R_0}^2\|q\|_\infty} R_0^2}
  \left(\mathcal{N}(\varphi_k^a, R_0,\lambda_k^a,A_a) +1 \right) + \frac{c_{R_0,p}}{\mu^{2}}-1 .
\end{equation}
From \eqref{eq_eigenfunction}, \eqref{eq:orthonormality}, and \eqref{eq:bound-lambda} we
deduce that 
\begin{equation} \label{eq:33}
\int_{D^+_{R_0}} |(i\nabla + A_a)\varphi_k^a|^2 \, dx \leq \int_{\Omega} | (i\nabla + A_a)\varphi_k^a|^2 \, dx = \lambda_k^a \leq \Lambda.
\end{equation}
Therefore, in view of Lemma \ref{lemma:Hpositivo}, if $|a|<\alpha_{R_0}$,
\begin{equation}\label{eq:29}
  \mathcal{N}(\varphi_k^a, R_0,\lambda_k^a,A_a) =
  \frac{\int_{D^+_{R_0}} |(i\nabla + A_a)\varphi_k^a|^2 \, dx -
    \lambda_k^a \int_{D^+_{R_0}} q(x) |\varphi_k^a|^2\,dx}
  {H(\varphi_k^a, R_0)}\leq \frac{\Lambda}{C_{R_0}}.
\end{equation}
Combining \eqref{eq:N1} and \eqref{eq:29} we obtain that, if $\mu \geq
\frac{R_0}{\alpha_{R_0}}$ and $|a| < \frac{R_0}\mu$, then
\[
\int_{D^+_{\mu|a|}} |(i\nabla + A_a)\varphi_k^a|^2 \, dx - \lambda_k^a
\int_{D^+_{\mu|a|}} q(x) |\varphi_k^a|^2 \, dx \leq {\rm const}\,
H(\varphi_k^a, \mu|a|)
\]
for some positive ${\rm const}>0$. Hence, from Lemmas \ref{lemma:Poincare} and \ref{lemma:Poincare_type},
\[
(1- 2 \Lambda \|q\|_{\infty} \mu^2|a|^2) \int_{D^+_{\mu|a|}} |(i\nabla +
A_a)\varphi_k^a|^2\,dx \leq {\rm const} \, H(\varphi_k^a, \mu|a|)
\]
which implies 
\begin{equation}\label{eq:30}
\int_{D^+_{\mu|a|}} |(i\nabla + A_a) \varphi_k^a|^2 \, dx \leq \frac{{\rm const}}{1-2 \Lambda \|q\|_{\infty} R_0^2} H(\varphi_k^a, \mu|a|).
\end{equation}
From Lemma \ref{lemma:H_estimate_below}, it follows that, if
$\mu \geq \frac{R_0}{\alpha_{R_0}}$ and $|a| < \frac{R_0}{\mu}$,
\begin{equation}\label{eq:31}
H(\varphi_k^a,\mu|a|) \leq e^{2 \Lambda \|q\|_{\infty} R_0^2} \left(\dfrac{\mu|a|}{R_0}\right)^{2} H(\varphi_k^a,R_0).
\end{equation}
On the other hand, Lemma \ref{lemma:Poincare_type} and \eqref{eq:33} yield
\begin{equation}\label{eq:32}
H(\varphi_k^a,R_0) \leq \int_{D^+_{R_0}} |( i \nabla + A_a)\varphi_k^a|^2 \, dx \leq \Lambda.
\end{equation}
Estimate \eqref{eq:34} follows combining \eqref{eq:31}, and \eqref{eq:32}, whereas estimate \eqref{eq:35} follows from \eqref{eq:30}, \eqref{eq:31}, and \eqref{eq:32}.  Finally, \eqref{eq:36} can be deduced from \eqref{eq:34}, \eqref{eq:35} and Lemma \ref{lemma:Poincare}.
\end{proof}

\section{Upper bound for $\lambda_N-\lambda_N^a$: the Rayleigh quotient for $\lambda_N$} \label{sec:upper}

Let $R>2$.  For $|a|$ sufficiently small and $1\leq k\leq N$, we define
\begin{equation}\label{eq:vNRa}
 v_{k,R,a}:= 
 \begin{cases}
  v_{k,R,a}^{ext}, &\text{in }\Omega \setminus D^+_{R|a|},\\
  v_{k,R,a}^{int}, &\text{in } D^+_{R|a|},
 \end{cases}
\quad k=1,\ldots,N,
\end{equation}
where 
\begin{equation*}
  v_{k,R,a}^{ext} := e^{\frac{i}{2}(\theta_0^a - \theta_a)}
  \varphi_k^a\quad \text{in }
  \Omega \setminus D^+_{R|a|}, 
\end{equation*}
with $\varphi_k^a$ as in \eqref{eq_eigenfunction}--\eqref{eq:orthonormality} and
$\theta_a,\theta_0^a$ as in \eqref{eq:angles},  so that it solves
\begin{equation*}
 \begin{cases}
   (i\nabla +A_0)^2 v_{k,R,a}^{ext} = \lambda_k^a q \,v_{k,R,a}^{ext}, &\text{in }\Omega \setminus D^+_{R|a|},\\
   v_{k,R,a}^{ext} = e^{\frac{i}{2}(\theta_0^a - \theta_a)} \varphi_k^a
   &\text{on }\partial (\Omega \setminus D^+_{R|a|}),
 \end{cases}
\end{equation*}
whereas $v_{k,R,a}^{int}$ is the unique solution to the problem 
\begin{equation*}
 \begin{cases}
  (i\nabla +A_0)^2 v_{k,R,a}^{int} = 0, &\text{in }D^+_{R|a|},\\
  v_{k,R,a}^{int} = e^{\frac{i}{2}(\theta_0^a - \theta_a)} \varphi_k^a, &\text{on }\partial D^+_{R|a|}.
 \end{cases}
\end{equation*}
It is easy to verify that 
$\mathop{\rm dim}\big(\mathop{\rm span} \{v_{1,R,a},\ldots,v_{N,R,a}\}\big)=N$.

 Arguing as in \cite[Theorem 6.1]{abatangelo2015sharp} and using
estimates \eqref{eq:34}--\eqref{eq:36}, we obtain 
that, for every $R >\max\{2,\frac{R_0}{\alpha_{R_0}}\}$, $a=|a|p\in\Omega$ with  $|a|<\frac{R_0}{R}$, 
and $1\leq k\leq N$, 
\begin{align}
\label{eq:35vint}&\int_{D^+_{R|a|}}|(i\nabla + A_0) v_{k,R,a}^{int}|^2\,dx\leq
\hat C (R|a|)^2,\\
\nonumber &\int_{\partial D^+_{R|a|}}|v_{k,R,a}^{int}|^2\,ds\leq \hat C (R|a|)^{3},\\
\label{eq:36vint}&\int_{D^+_{R|a|}}|v_{k,R,a}^{int}|^2\,dx\leq
\hat C(R|a|)^{4},
\end{align}
for some $\hat C>0$ (depending on $p$ but independent of $|a|$).
For all  $R> \bar K$ and  $a=|a|p\in\Omega$ with
  $|a|$ small, we also define 
\begin{equation}\label{eq:zar}
Z_a^R(x):=\frac{v_{N,R,a}^{int} (|a|x)}{\sqrt{H_a}}.
\end{equation}
As a consequence
of \eqref{eq:67} and of the Dirichlet principle, arguing as in
\cite[Lemma 6.3]{abatangelo2015sharp}, 
we can prove that the family of functions  
\begin{equation}\label{eq:67_zar}
\big\{Z_a^R: a=|a|p, |a|<\tfrac{\bar r}{R}\big\}
 \text{ is bounded in $H^{1 ,0}(D_R^+,\C)$}.
\end{equation}
In particular, for all $R>\bar K$,
 \begin{align}
\label{eq:46zar}&\int_{D^+_{R|a|}} |(i\nabla +
  A_0) v_{N,R,a}^{int}|^2dx=O(H_a),\quad\text{as }|a|\to0^+,\\
\nonumber &\int_{\partial D ^+_{R|a|}} |v_{N,R,a}^{int}|^2dx=O(|a|H_a),\quad\text{as }|a|\to0^+,\\
\label{eq:48zar} &\int_{D^+_{R|a|}} |v_{N,R,a}^{int}|^2dx=O(|a|^2H_a),\quad\text{as }|a|\to0^+.  
\end{align}

\begin{lemma}\label{l:stima_Lambda0_sopra}
Let $p\in{\mathbb S}^1_+$. There exists
  $\tilde R>2$ such that, for all $R>\tilde R$ and  $a=|a|p\in\Omega$ with
  $|a|<\frac{R_0}R$, 
\[
\frac{\lambda_N-\lambda_N^a}{H_a}\leq f_R(a)
\]
where 
\begin{align*}
&f_R(a)=
\int_{D^+_R}|(i\nabla+A_0)Z_a^R|^2\,dx-\int_{D^+_R}|(i\nabla+A_{p})
\tilde \varphi_a|^2\,dx+o(1),\quad\text{as }|a|\to 0^+,\\
\notag &f_R(a)=O(1) ,\quad\text{as }|a|\to 0^+,
\end{align*}
with $\tilde \varphi_a$ and  $Z_a^R$ defined in
\eqref{def_blowuppate_normalizzate} and \eqref{eq:zar} respectively.
In particular $\lambda_N-\lambda_N^a\leq \mathop{\rm const\,}H_a$ as
$a=|a|p\to 0$, for some $\mathop{\rm const\,}>0$ independent of $|a|$.
\end{lemma}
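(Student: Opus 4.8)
The plan is to bound $\lambda_N$ from above by the Courant--Fischer minimax. Since $0\in\partial\Omega$, by \eqref{eq:equivalent_spaces} we have $H^{1,0}_0(\Omega,\C)=H^1_0(\Omega,\C)$ and $\lambda_N$ equals the $N$-th eigenvalue of $(i\nabla+A_0)^2$ with weight $q$; testing with the $N$-dimensional trial space $W_{R,a}=\mathrm{span}_{\C}\{v_{1,R,a},\dots,v_{N,R,a}\}\subset H^1_0(\Omega,\C)$ gives $\lambda_N\le\mu_{\max}$, the largest generalized eigenvalue of the Hermitian pair $M=(m_{k\ell})$, $P=(p_{k\ell})$ with $m_{k\ell}=\int_\Omega(i\nabla+A_0)v_{k,R,a}\cdot\overline{(i\nabla+A_0)v_{\ell,R,a}}\,dx$ and $p_{k\ell}=\int_\Omega q\,v_{k,R,a}\overline{v_{\ell,R,a}}\,dx$ ($P$ being positive definite for $|a|$ small). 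On $\Omega\setminus D^+_{R|a|}$ one has $v_{k,R,a}=e^{\frac i2(\theta_0^a-\theta_a)}\varphi_k^a$, so the gauge phases cancel in the products and $|(i\nabla+A_0)v_{k,R,a}|=|(i\nabla+A_a)\varphi_k^a|$; using the eigenfunction relations $\int_\Omega(i\nabla+A_a)\varphi_k^a\cdot\overline{(i\nabla+A_a)\varphi_\ell^a}\,dx=\lambda_k^a\delta_{k\ell}$ and $\int_\Omega q\varphi_k^a\overline{\varphi_\ell^a}\,dx=\delta_{k\ell}$ one rewrites each entry as the corresponding global quantity minus a $D^+_{R|a|}$-integral of the $\varphi$'s plus a $D^+_{R|a|}$-integral of the $v^{int}$'s. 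With $\delta_N:=\int_{D^+_{R|a|}}|(i\nabla+A_a)\varphi_N^a|^2\,dx$ and $\epsilon_N:=\int_{D^+_{R|a|}}|(i\nabla+A_0)v_{N,R,a}^{int}|^2\,dx$, the local estimates \eqref{eq:34}--\eqref{eq:36} and \eqref{eq:35vint}--\eqref{eq:36vint}, together with the sharper ones \eqref{eq:46}, \eqref{eq:48}, \eqref{eq:46zar}, \eqref{eq:48zar} for the index $N$, and Cauchy--Schwarz for the off-diagonal terms, yield $m_{NN}=\lambda_N^a+(\epsilon_N-\delta_N)$ with $\epsilon_N,\delta_N=O(H_a)$, $p_{NN}=1+O(|a|^2H_a)$, $m_{kk}=\lambda_k^a+O((R|a|)^2)$ and $p_{kk}=1+O(|a|^4)$ for $k<N$, $|m_{k\ell}|\le C(R|a|)^2$, $|p_{k\ell}|\le C|a|^4$ in general, while $|m_{kN}|\le C|a|\sqrt{H_a}$, $|p_{kN}|\le C|a|^3\sqrt{H_a}$ for $k<N$.

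The core of the argument is estimating $\mu_{\max}$ from above. Let $c\in\C^N$, $|c|=1$, realize the maximum and set $s:=1-|c_N|^2=\sum_{k<N}|c_k|^2$. Since $\lambda_N$ is simple, $\lambda_k^a\to\lambda_k\le\lambda_{N-1}<\lambda_N$ for $k<N$, so $\lambda_N^a-\lambda_k^a\ge c_0>0$ for $|a|$ small. Plugging the matrix estimates into $c^*Mc$ and $c^*Pc$, and bounding the cross terms by Cauchy--Schwarz, one gets $c^*Mc\le\lambda_N^a+(\epsilon_N-\delta_N)-\tfrac{c_0}{2}s+C|a|\sqrt{H_a}\sqrt s$ and $c^*Pc\ge 1-C(|a|^2H_a+|a|^4s+|a|^3\sqrt{H_a}\sqrt s)$. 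Comparing these with the lower bound $\mu_{\max}\ge m_{NN}/p_{NN}=\lambda_N^a+(\epsilon_N-\delta_N)+O(|a|^2H_a)$ (obtained by testing with $e_N$), and absorbing the negative quadratic term $-\tfrac{c_0}{2}s$, forces $s=O(|a|^2H_a)$. Feeding this back gives $\mu_{\max}\le\lambda_N^a+(\epsilon_N-\delta_N)+O(|a|^2H_a)$, hence $\lambda_N-\lambda_N^a\le(\epsilon_N-\delta_N)+O(|a|^2H_a)$.

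Finally I would divide by $H_a>0$ (legitimate by Corollary \ref{l:Hbelow}) and rescale via $x\mapsto|a|x$, using the homogeneity $A_0(|a|x)=|a|^{-1}A_0(x)$ and $A_a(|a|x)=|a|^{-1}A_p(x)$ (recall $a=|a|p$), which give $\epsilon_N/H_a=\int_{D^+_R}|(i\nabla+A_0)Z_a^R|^2\,dx$ and $\delta_N/H_a=\int_{D^+_R}|(i\nabla+A_p)\tilde\varphi_a|^2\,dx$; since $O(|a|^2H_a)/H_a=o(1)$, this is precisely the first asserted expression for $f_R(a)$. The bound $f_R(a)=O(1)$ then follows from the uniform boundedness of $Z_a^R$ in $H^{1,0}(D^+_R,\C)$ (see \eqref{eq:67_zar}) and of $\tilde\varphi_a$ in $H^{1,p}(D^+_R,\C)$ (see \eqref{eq:67}); consequently $\lambda_N-\lambda_N^a\le f_R(a)H_a\le\mathrm{const}\cdot H_a$. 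Here $\tilde R$ is taken so large (e.g.\ $\tilde R>\max\{2,\bar K,R_0/\alpha_{R_0}\}$) that all the invoked estimates hold whenever $R>\tilde R$ and $|a|<R_0/R$. The overall scheme parallels \cite[Theorem 6.1]{abatangelo2015sharp}.

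The step I expect to be the main obstacle is the control of $\mu_{\max}$: one must show that the maximizing vector concentrates on the $N$-th mode at the rate $s=O(|a|^2H_a)=o(H_a)$, so that the genuinely $O(H_a)$-sized correction $\epsilon_N-\delta_N$ survives as the leading term and is not drowned by the off-diagonal leakage between the $N$-th and the lower modes. This is exactly where the simplicity of $\lambda_N$ (spectral gap) and the refined local bounds \eqref{eq:46}, \eqref{eq:48} for $\varphi_N^a$ — rather than the cruder \eqref{eq:35}, \eqref{eq:36} — have to be exploited; a careless estimate produces only an $O(|a|^4)$ error, which fails to be $o(H_a)$ when $j\ge2$.
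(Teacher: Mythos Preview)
Your proposal is correct and follows the same overall strategy as the paper: both insert the trial space spanned by $v_{1,R,a},\dots,v_{N,R,a}$ into the Courant--Fischer characterization of $\lambda_N$, compute the resulting $N\times N$ Hermitian form with the same local estimates \eqref{eq:34}--\eqref{eq:36}, \eqref{eq:46}--\eqref{eq:48}, \eqref{eq:35vint}--\eqref{eq:36vint}, \eqref{eq:46zar}--\eqref{eq:48zar}, and extract the $N$-th diagonal entry as the leading term.

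The one methodological difference is in the final linear-algebra step. The paper first applies a weighted Gram--Schmidt process to the $v_{k,R,a}$, so that the mass matrix becomes the identity and the problem reduces to a standard Hermitian maximum; it then invokes \cite[Lemma~6.1]{abatangelo2015sharp}, a black-box perturbation lemma stating that for a Hermitian matrix whose off-diagonal block is small compared to the spectral gap of the diagonal, the top eigenvalue equals the top diagonal entry up to the claimed error. You instead keep the generalized eigenvalue problem $(M,P)$ and argue directly that the maximizing vector concentrates on $e_N$ with $s=1-|c_N|^2=O(|a|^2H_a)$, which is exactly the content of that lemma unpacked. Your route is more self-contained; the paper's is slightly cleaner because the Gram--Schmidt coefficients $d_{\ell,k}^{R,a}$ absorb the mass-matrix perturbations once and for all, so one never has to juggle $M$ and $P$ simultaneously. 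Either way the conclusion and the choice of $\tilde R>\max\{2,\bar K,R_0/\alpha_{R_0}\}$ are the same.
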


\begin{proof}
  Let us fix 
  $R> \max\{2,\bar K,\frac{R_0}{\alpha_{R_0}}\}$. 
  Let us consider the family of functions $\{\tilde
  v_{k,R,a}\}_{k=1,\dots,N}$ resulting from
  $\{v_{k,R,a}\}_{k=1,\dots,N}$ by a  weighted Gram--Schmidt
  process, that is
\[ 
\tilde v_{k,R,a}:= \dfrac{\hat v_{k,R,a}}{\sqrt{\int_\Omega q|\hat v_{k,R,a}|^2\,dx}}, \quad k=1,\ldots, N, 
\]
where $\hat v_{N,R,a} := v_{N,R,a}$,
\begin{equation*}
\hat v_{k,R,a} := v_{k,R,a} - \sum_{\ell= k+1}^{N} d_{\ell,k}^{R,a} \hat v_{\ell,R,a}, 
\quad \text{for }k=1,\ldots, N-1,
\end{equation*}
and
\[
 d_{\ell,k}^{R,a} :=
\dfrac{\int_\Omega
  q\, v_{k,R,a}  \overline{\hat v_{\ell,R,a}}\,dx}{
\int_\Omega
  q \,|\hat v_{\ell,R,a}|^2\,dx}.
 \]
By constructions, there hold 
\begin{equation}\label{eq:gs}
\int_\Omega q|\tilde v_{k,R,a}|^2\,dx=1\text{ for all }k\quad\text{and}\quad
\int_\Omega q\, \tilde v_{k,R,a}\overline{\tilde v_{\ell,R,a}}\,dx=0\text{ for all }k\neq\ell.
\end{equation}
From \eqref{eq:orthonormality}, 
\eqref{eq:48}, \eqref{eq:36}, \eqref{eq:36vint}, \eqref{eq:48zar},
and an induction argument, we deduce that 
\begin{align}
&\label{eq:18}
\int_\Omega
  q \,|\hat v_{k,R,a}|^2 \,dx =1+O(|a|^{4})
\quad\text{and}\quad 
d_{\ell,k}^{R,a}=O(|a|^{4})\text{ for }\ell\neq k\quad\text{as }
|a|\to 0^+,\\
&\label{eq:13}
\int_\Omega
  q \,|\hat v_{N,R,a}|^2\,dx=
\int_\Omega
  q \,|v_{N,R,a}|^2\,dx
=1+O\big(|a|^{2}H_a\big)
\quad\text{as }|a|\to 0^+,\\
\label{eq:76}
&d_{N,k}^{R,a}= O\big(|a|^{3}\sqrt{H_a}\big)
\quad\text{as }|a|\to 0^+,\quad\text{for all }k<N.
\end{align}
From the classical 
Courant-Fisher minimax characterization of eigenvalues
and \eqref{eq:gs}
it follows that 
\begin{equation*}
  \lambda_N \leq \max_
  {\substack{(\alpha_1,\dots, \alpha_{N})\in \C^{N}\\
      \sum_{k=1}^{{N}}|\alpha_k|^2 =1}}
  \int_{\Omega} \bigg|(i\nabla+A_0) \bigg(\sum_{k=1}^{{N}}\alpha_k
     \tilde v_{k,R,a}\bigg)\bigg|^2 dx,
\end{equation*}
so that 
\begin{equation}\label{eq:107}
\lambda_N-\lambda_N^a\leq \max_
  {\substack{(\alpha_1,\dots, \alpha_{N})\in \C^{N}\\
      \sum_{k=1}^{{N}}|\alpha_k|^2 =1}}\sum_{k,n=1}^{N}
m_{k,n}^{a,R}\alpha_k \overline{\alpha_n},
\end{equation}
where 
\[
m_{k,n}^{a,R}= \int_{\Omega} (i\nabla+A_0)    \tilde v_{k,R,a}\cdot
\overline{(i\nabla+A_0)    \tilde v_{n,R,a}}\, dx
-\lambda_N^a \delta_{kn},
\]
with $\delta_{kn}=1$ if $k=n$ and 
$\delta_{kn}=0$ if $k\neq n$.
From \eqref{eq:13},
\eqref{eq:zar}, and \eqref{def_blowuppate_normalizzate} we deduce that 
\begin{align*}
 \notag m_{N,N}^{a,R} &= \dfrac{\lambda_N^a (1-
\int_\Omega
  q \,|v_{N,R,a}|^2\,dx)}{\int_\Omega
  q \,|v_{N,R,a}|^2\,dx} \\
&\notag\qquad+
  \dfrac{\left( \int_{D^+_{R|a|}} \big| (i\nabla +A_0)v_{N,R,a}^{int}
      \big|^2 dx
      - \int_{D^+_{R|a|}} \big| (i\nabla +A_a)\varphi_N^a \big|^2 dx\right)}{\int_\Omega
  q \,|v_{N,R,a}|^2\,dx} \\
 &= H_a\bigg(\int_{D^+_R}|(i\nabla+A_0)Z_a^R|^2\,dx-\int_{D^+_R}|(i\nabla+A_{p})\tilde \varphi_a|^2\,dx+o(1)\bigg),
\end{align*}
as 
$|a|\to0^+$. 
We observe that, in view of \eqref{eq:67} and \eqref{eq:67_zar}, 
\begin{equation}\label{eq:3}
\int_{D^+_R}|(i\nabla+A_0)Z_a^R|^2\,dx-\int_{D^+_R}|(i\nabla+A_{p})\tilde
\varphi_a|^2\,dx=O(1)
\quad\text{as}\quad |a|\to0^+.
\end{equation}
From \eqref{eq:conv_autov},
 \eqref{eq:18}, \eqref{eq:76}, \eqref{eq:35}, and \eqref{eq:35vint}, we obtain that, if $k<N$, 
\begin{align*}
  m_{k,k}^{a,R}&= -\lambda_N^a 
  +\dfrac{1}{
\int_\Omega
  q \,|\hat v_{k,R,a}|^2 \,dx} \left(
    \lambda_k^a - \int_{D^+_{R|a|}}\!\! \!|(i\nabla
      +A_a)\varphi_k^a|^2dx
    + \int_{D^+_{R|a|}}\! \!\!|(i\nabla +A_0)v_{k,R,a}^{int}|^2dx \right) \\
  &\ + \dfrac{1}{
\int_\Omega
  q \,|\hat v_{k,R,a}|^2\,dx} 
  \int_{\Omega} \bigg|(i\nabla + A_0)
  \Big(\sum_{\ell>k} d_{\ell,k}^{R,a} \hat v_{\ell,R,a} \Big)\bigg|^2dx\\
  & \ -
\dfrac{2}{
\int_\Omega
  q \,|\hat v_{k,R,a}|^2\,dx} 
\Re\bigg(\int_{\Omega} (i\nabla + A_0)v_{k,R,a} \cdot
  \overline{(i\nabla + A_0) \Big( \sum_{\ell>k} d_{\ell,k}^{R,a} \hat v_{\ell,R,a} \Big)}\,dx\bigg)\\
  &=(\lambda_k-\lambda_N)+o(1)\quad\text{as }|a|\to0.
\end{align*}
We observe that from \eqref{eq:simple_eigenv_n0} it follows that
$\lambda_k-\lambda_N<0$ for all $k<N$.

From 
\eqref{eq:46}, \eqref{eq:35}, \eqref{eq:46zar}, and \eqref{eq:35vint},
we deduce that, for all $k<N$,
\begin{align*}
&\left(\int_\Omega q|\hat v_{k,R,a}|^2\,dx\right)^{\!1/2}
\left(\int_\Omega q|\hat v_{N,R,a}|^2\,dx\right)^{\!1/2} m_{k,N}^{a,R}\\
&= \int_{D^+_{R|a|}}\Big((i\nabla+A_0)
    v_{k,R,a}^{int}\cdot \overline{(i\nabla+A_0) v_{N,R,a}^{int} }-
    (i\nabla+A_a) \varphi_k^a\cdot
    \overline{(i\nabla+A_a) \varphi_N^a}\Big)\,dx \\
 &\quad - \int_{\Omega} (i\nabla+A_0)
    \Big(\sum_{\ell>k}d_{\ell,k}^{R,a} \hat v_{\ell,R,a}\Big) \cdot
    \overline{(i\nabla+A_0) v_{N,R,a} }\,dx
=O\Big(|a|  \sqrt{H_a}\Big),
\end{align*}
so that, by \eqref{eq:18} and \eqref{eq:13},
\[
  m_{k,N}^{a,R}=O\Big(|a|  \sqrt{H_a}\Big)\quad\text{and}\quad 
m_{N,k}^{a,R}=\overline{ m_{k,N}^{a,R}}=O\Big(|a| \sqrt{H_a}\Big)
\]
as 
$|a|\to 0^+$.
In a similar way, from \eqref{eq:35} and \eqref{eq:35vint} we can
deduce that, for all $k,n<N$ with $k\neq n$,
\[
  m_{k,n}^{a,R}=O(|a|^{2}) \quad\text{as }|a|\to0.
\]
Thanks to Corollary \ref{l:Hbelow} we can apply 
\cite[Lemma 6.1]{abatangelo2015sharp} to conclude that 
\[
  \max_
  {\substack{(\alpha_1,\dots, \alpha_{N})\in \C^{N}\\
      \sum_{k=1}^{{N}}|\alpha_k|^2 =1}}\sum_{j,n=1}^{N}
  m_{k,n}^{a,R}\alpha_k \overline{\alpha_n}\\ =H_a\bigg(
  \int_{D^+_R}|(i\nabla+A_0)Z_a^R|^2\,dx-\int_{D^+_R}|(i\nabla+A_p)\tilde
  \varphi_a|^2\,dx+o(1)\bigg)
\]
as $|a|\to 0^+$. The conclusion then follows from \eqref{eq:107} and \eqref{eq:3}.
\end{proof}

\section{Lower bound for $\lambda_N-\lambda_N^a$: the Rayleigh quotient for $\lambda_N^a$} \label{sec:lower}

For $R>2$, $1\leq k\leq N$, and $|a|$ sufficiently small we define
\[
 w_{k,R,a}:= 
 \begin{cases}
  w_{k,R,a}^{ext}, &\text{in }\Omega \setminus D^+_{R|a|},\\
  w_{k,R,a}^{int}, &\text{in } D^+_{R|a|},
 \end{cases}
\quad k=1,\ldots,N,
\]
where $w_{k,R,a}^{ext} := e^{\frac{i}{2}( \theta_a-\theta_0^a)}
  \varphi_k^0\quad \text{in }
  \Omega \setminus D^+_{R|a|}$
solves
\begin{equation*}
 \begin{cases}
   (i\nabla +A_a)^2 w_{k,R,a}^{ext} = \lambda_k q \,w_{k,R,a}^{ext}, &\text{in }\Omega \setminus D^+_{R|a|},\\
   w_{k,R,a}^{ext} = e^{\frac{i}{2}(\theta_a-\theta_0^a)} \varphi_k^0,
   &\text{on }\partial (\Omega \setminus D^+_{R|a|}),
 \end{cases}
\end{equation*}
whereas $w_{k,R,a}^{int}$ is the unique solution to the problem 
\begin{equation*}
 \begin{cases}
  (i\nabla +A_a)^2 w_{k,R,a}^{int} = 0, &\text{in }D^+_{R|a|},\\
  w_{k,R,a}^{int} = e^{\frac{i}{2}(\theta_a-\theta_0^a)} \varphi_k^0, &\text{on }\partial D^+_{R|a|}.
 \end{cases}
\end{equation*}
From \eqref{eq:orthonormality} it follows easily that 
$\mathop{\rm dim}\big(\mathop{\rm span} \{w_{1,R,a},\ldots,w_{N,R,a}\}\big)=N$.
From \cite{HW} and \cite{HHT2009} (see also \cite{FF2013}) we have that
\begin{align}
\label{eq:35_la1}&\int_{D^+_{R|a|}}|(i\nabla + A_0)\varphi_k^0|^2\,dx=O(|a|^2),\\
\label{eq:35_la2}
&\int_{\partial
  D^+_{R|a|}}|\varphi_k^0|^2\,ds=O(|a|^3)\quad\text{and}\quad
\int_{D^+_{R|a|}}|\varphi_k^0|^2\,dx=O(|a|^{4})
\quad\text{as }|a|\to0^+.
\end{align}
From estimates \eqref{eq:35_la1}--\eqref{eq:35_la2} and the Dirichlet
principle we deduce that 
\begin{align}
\label{eq:35vint_la}&\int_{D^+_{R|a|}}|(i\nabla + A_a) w_{k,R,a}^{int}|^2\,dx=O(|a|^2),\\
\label{eq:34vint_la}&\int_{\partial
  D^+_{R|a|}}|w_{k,R,a}^{int}|^2\,ds=O(|a|^3)\quad \text{and}\quad
\int_{D^+_{R|a|}}|w_{k,R,a}^{int}|^2\,dx=O(|a|^4)
\quad\text{as }|a|\to0^+.
\end{align}
For all  $R> 2$ and  $a=|a|p\in\Omega$ with
  $|a|$ small, we define 
 \begin{align}\label{eq:85_la} 
U_a^R(x):=\frac{w_{N,R,a}^{int} (|a|x)}{|a|^{j}}, \quad 
 W_a(x):=\frac{\varphi_N^0(|a|x)}{|a|^{j}}.
\end{align}
From \eqref{eq:asyphi0} we deduce that 
\begin{equation}\label{eq:vkext_la}
W_a\to  \beta e^{\frac i2\tilde \theta_0}\psi_j\quad\text{as } |a|\to0^+
\end{equation}
in $H^{1 ,0}(D^+_R,\C)$ for
every $R>2$, where $\psi_j$ is given in \eqref{eq:psi_j}
and $\beta\in\C\setminus\{0\}$ is as in \eqref{eq:asyphi0}.
Let  $u_R$ be the unique solution to the problem 
\begin{equation}\label{eq:equazione_w_R}
 \begin{cases}
  (i\nabla +A_{p})^2 u_R = 0, &\text{in }D^+_{R},\\
  u_R = e^{\frac{i}{2}(\theta_{p}-\theta_0^p)}e^{\frac i2\tilde\theta_0}
\psi_j, &\text{on }\partial D^+_{R}.
 \end{cases}
\end{equation}
Using the Dirichlet principle and \eqref{eq:vkext_la}, we can prove
that, for all $R>2$,
\begin{equation}\label{eq:vkint_la} 
  U_a^R \to  \beta u_R, \quad\text{in  }H^{1,p}(D_R^+,\C),
\end{equation}
as $a=|a|p\to 0$.

\begin{lemma}\label{l:conv_w_r}
For every $r>1$, $u_R\to\Psi_p$ in $H^{1,p}(D^+_r,\C)$ as $R\to+\infty$.  
\end{lemma}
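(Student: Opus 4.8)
Write $\Phi_j:=e^{\frac i2(\theta_p-\theta_0^p+\tilde\theta_0)}\psi_j$, so that by \eqref{eq:Psip} we have $\Psi_p=\Phi_j+e^{\frac i2(\theta_p-\theta_0^p+\tilde\theta_0)}w_p$, while by \eqref{eq:equazione_w_R} $u_R=\Phi_j$ on $\partial D_R^+$. The plan is to set $z_R:=u_R-\Psi_p$ and prove that $\|z_R\|_{H^{1,p}(D_r^+,\C)}\to0$ for each fixed $r>1$: first I would control the magnetic energy of $z_R$ on the whole $D_R^+$ by a cut-off competitor built from $w_p$, and then upgrade to the full weighted $H^{1,p}$-norm on $D_r^+$ through the magnetic Hardy inequality \eqref{eq:hardy}. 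Note that since $\Psi_p$ solves \eqref{eq:Psip2}, the function $z_R$ lies in $H^{1,p}(D_R^+,\C)$, solves $(i\nabla+A_p)^2 z_R=0$ weakly in $D_R^+$, vanishes on $\{x_1=0\}\cap\overline{D_R}$, and on $\partial D_R\cap\R^2_+$ it coincides with $g_R:=-e^{\frac i2(\theta_p-\theta_0^p+\tilde\theta_0)}w_p$, which by \eqref{eq:Psip4} satisfies $|g_R|\le C\,R^{-1}$ there.

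For $R>3$ I would choose a radial cut-off $\eta_R$ with $\eta_R\equiv0$ on $D_{R/2}$, $\eta_R\equiv1$ outside $D_{3R/4}$, $|\nabla\eta_R|\le C R^{-1}$, and set $G_R:=\eta_R\,g^{\mathrm{ext}}$, where $g^{\mathrm{ext}}:=-e^{\frac i2(\theta_p-\theta_0^p+\tilde\theta_0)}w_p=\Phi_j-\Psi_p$ is defined on $\R^2_+\setminus\Gamma_p$. Since $\overline{\Gamma_p}\cup\{p\}\subset\overline{D_1}\subset D_{R/2}$, the function $G_R$ vanishes in a neighborhood of the crack $\Gamma_p$ and of the pole $p$, whence $G_R\in H^{1,p}(D_R^+,\C)$; moreover $G_R=g_R$ on $\partial D_R\cap\R^2_+$ and $G_R=0$ on $\{x_1=0\}\cap\overline{D_R}$ (as $w_p$ and $\psi_j$ vanish on $\partial\R^2_+$). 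Using the Leibniz rule (licit because $G_R$ vanishes near $\Gamma_p$), the decay $|w_p|=O(|x|^{-1})$ from \eqref{eq:Psip4}, and the finiteness $\int_{\R^2_+\setminus\Gamma_p}|(i\nabla+A_p)g^{\mathrm{ext}}|^2\,dx<+\infty$ from \eqref{eq:Psip3}, one obtains
\[
\int_{D_R^+}|(i\nabla+A_p)G_R|^2\,dx\le 2\int_{D_{3R/4}^+\setminus D_{R/2}^+}|\nabla\eta_R|^2|w_p|^2\,dx+2\int_{\R^2_+\setminus D_{R/2}^+}|(i\nabla+A_p)g^{\mathrm{ext}}|^2\,dx\longrightarrow0
\]
as $R\to+\infty$: the first term is $O(R^{-2})$ because $|\nabla\eta_R|^2|w_p|^2\le C R^{-4}$ on the relevant annulus of area $O(R^2)$, while the second is the tail of a convergent integral.

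The crucial step is now the energy comparison. Since $z_R-G_R$ has vanishing trace on all of $\partial D_R^+$ it belongs to $H^{1,p}_0(D_R^+,\C)$ (near $p$ one uses that $z_R=O(|x-p|^{1/2})$, cf.\ Remark \ref{rem:formulazione}, so that the cut-off correction has vanishing energy), and testing $(i\nabla+A_p)^2 z_R=0$ against $z_R-G_R$ gives $\int_{D_R^+}|(i\nabla+A_p)z_R|^2\,dx=\Re\int_{D_R^+}(i\nabla+A_p)z_R\cdot\overline{(i\nabla+A_p)G_R}\,dx$; by Cauchy--Schwarz this yields $\int_{D_R^+}|(i\nabla+A_p)z_R|^2\,dx\le\int_{D_R^+}|(i\nabla+A_p)G_R|^2\,dx\to0$. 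Fixing $r>1$ and taking $R>\max\{3,2r\}$ we have $G_R\equiv0$ on $D_r^+$; extending $z_R-G_R$ by zero and applying \eqref{eq:hardy},
\[
\int_{D_r^+}\frac{|z_R|^2}{|x-p|^2}\,dx=\int_{D_r^+}\frac{|z_R-G_R|^2}{|x-p|^2}\,dx\le C\!\int_{\R^2_+}|(i\nabla+A_p)(z_R-G_R)|^2\,dx\le 2C\!\left(\int_{D_R^+}|(i\nabla+A_p)z_R|^2\,dx+\int_{D_R^+}|(i\nabla+A_p)G_R|^2\,dx\right)\!\longrightarrow0.
\]
Since $|x-p|\le r+1$ on $D_r^+$ and $|A_p(x)|^2=\tfrac14|x-p|^{-2}$, it follows that $\int_{D_r^+}|z_R|^2\,dx\le(r+1)^2\int_{D_r^+}|x-p|^{-2}|z_R|^2\,dx\to0$ and $\int_{D_r^+}|\nabla z_R|^2\,dx\le 2\int_{D_r^+}|(i\nabla+A_p)z_R|^2\,dx+\tfrac12\int_{D_r^+}|x-p|^{-2}|z_R|^2\,dx\to0$, so that $\|z_R\|_{H^{1,p}(D_r^+,\C)}\to0$, i.e.\ $u_R\to\Psi_p$ in $H^{1,p}(D_r^+,\C)$ as $R\to+\infty$.

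The main obstacle I expect is the construction of the competitor $G_R$: it has to simultaneously reproduce the boundary values of $z_R$ on $\partial D_R^+$, be an admissible element of $H^{1,p}(D_R^+,\C)$ — which forces it to be cut off away from both the crack $\Gamma_p$ and the pole $p$ — and carry a magnetic energy vanishing as $R\to+\infty$, the latter requiring the joint use of the quantitative decay \eqref{eq:Psip4} of $w_p$ and the global finiteness \eqref{eq:Psip3}. Once the energy decay $\int_{D_R^+}|(i\nabla+A_p)z_R|^2\,dx\to0$ is secured, the passage to convergence in the weighted norm $H^{1,p}(D_r^+,\C)$ is a routine consequence of the magnetic Hardy inequality together with the gauge identity $|A_p(x)|=\tfrac12|x-p|^{-1}$.
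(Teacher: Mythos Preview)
Your proposal is correct and follows essentially the same approach as the paper: both construct the cut-off competitor $\eta_R(\Phi_j-\Psi_p)$, apply the Dirichlet principle (which you phrase via testing and Cauchy--Schwarz) to bound $\int_{D_R^+}|(i\nabla+A_p)(u_R-\Psi_p)|^2\,dx$ by the competitor's energy, and then use \eqref{eq:Psip3}--\eqref{eq:Psip4} to send this to zero. You additionally spell out the passage from magnetic-energy convergence to full $H^{1,p}(D_r^+,\C)$-convergence via the Hardy inequality, which the paper leaves implicit.
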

\begin{proof}
  Let $r>2$. For every $R>r$, 
let $\eta_R:\R^2\to\R$ be a smooth cut-off function such
that $\eta_R\equiv 0$ in $D_{R/2}$,
  $\eta_R\equiv 1$ on $\R^2\setminus D_{R}$, $0\leq \eta_R\leq1$, and 
  $|\nabla\eta_R|\leq4/R$ in $\R^2$.
From the Dirichlet Principle, \eqref{eq:Psip3}, and
  \eqref{eq:Psip4} we deduce that 
  \begin{align*}
    \int_{D^+_{r}}& |(i\nabla +A_{p})(u_R-\Psi_p)|^2\,dx\leq
\int_{D^+_{R}} \left|(i\nabla +A_{p}) \left (
\eta_R(
 e^{\frac{i}{2}(\theta_{p}-\theta_0^p)}e^{\frac i2\tilde\theta_0}
\psi_j-\Psi_p) \right) \right|^2\,dx\\
&\leq  2\int_{\R^2_+\setminus D_{R/2}^+}
\left|(i\nabla +A_{p}) \left (
 e^{\frac{i}{2}(\theta_{p}-\theta_0^p)}e^{\frac i2\tilde\theta_0}
\psi_j-\Psi_p \right) \right|^2\,dx\\
&\quad +\frac{32}{R^2}
 \int_{D_{R}^+\setminus D_{R/2}^+}
\left|
 e^{\frac{i}{2}(\theta_{p}-\theta_0^p)}e^{\frac i2\tilde\theta_0}
\psi_j-\Psi_p \right|^2\,dx=o(1)
  \end{align*}
as $R\to+\infty$.
\end{proof}

\begin{lemma}\label{l:stima_Lambda0_sotto}
Let $p\in{\mathbb S}^1_+$. Let $\tilde R$ be as in Lemma \ref{l:stima_Lambda0_sopra}. 
 For all $R>\tilde R$ and $a=|a|p\in\Omega$ such that
 $|a|<\frac{R_0}{R}$, there holds
\[
\frac{\lambda_N -\lambda_N^a}{|a|^{2j}}\geq g_R(a)
\]
where $\lim_{|a|\to 0^+}g_R(a)=i|\beta|^2\tilde\kappa_R$,
being $\beta$ as in \eqref{eq:asyphi0} and 
\begin{equation}\label{eq:tildekappa_R}
\tilde\kappa_R=\int_{\partial D_R^+}\Big(
e^{-\frac{i}{2}(\theta_{p}-\theta_0^p)}e^{-\frac i2\tilde\theta_0}
(i\nabla+A_{p})u_R\cdot\nu
- (i\nabla) \psi_j\cdot\nu
\Big)\psi_j\,ds.
\end{equation}
\end{lemma}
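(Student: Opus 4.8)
The plan is to follow the scheme of Lemma~\ref{l:stima_Lambda0_sopra}, but now with the Rayleigh quotient for $\lambda_N^a$ in place of the one for $\lambda_N$, and with the competitor family $\{w_{k,R,a}\}_{k=1,\dots,N}$ introduced above in place of $\{v_{k,R,a}\}$. First I would orthonormalize $\{w_{k,R,a}\}$ by a weighted Gram--Schmidt process with respect to the scalar product $(u,v)\mapsto\int_\Omega q\,u\,\bar v\,dx$, obtaining $\{\tilde w_{k,R,a}\}$ with $\int_\Omega q\,\tilde w_{k,R,a}\overline{\tilde w_{n,R,a}}\,dx=\delta_{kn}$; each $\tilde w_{k,R,a}\in H^{1,a}_0(\Omega,\C)$, with $(i\nabla+A_a)w_{k,R,a}^{ext}=e^{\frac i2(\theta_a-\theta_0^a)}(i\nabla+A_0)\varphi_k^0$ outside $D^+_{R|a|}$ by gauge invariance, while $w_{k,R,a}^{int}$ is defined directly inside. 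The Courant--Fischer characterization of $\lambda_N^a$ then yields
\[
\lambda_N^a-\lambda_N\le \max_{\substack{(\alpha_1,\dots,\alpha_N)\in\C^N\\\sum_k|\alpha_k|^2=1}}\ \sum_{k,n=1}^N m_{k,n}^{a,R}\,\alpha_k\overline{\alpha_n},\qquad m_{k,n}^{a,R}:=\int_\Omega (i\nabla+A_a)\tilde w_{k,R,a}\cdot\overline{(i\nabla+A_a)\tilde w_{n,R,a}}\,dx-\lambda_N\delta_{kn}.
\]

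The heart of the argument is the evaluation of the Hermitian matrix $(m_{k,n}^{a,R})$. Exactly as in Lemma~\ref{l:stima_Lambda0_sopra}, the bounds \eqref{eq:35vint_la}--\eqref{eq:34vint_la} for the interior pieces, the decay \eqref{eq:35_la1}--\eqref{eq:35_la2} of $\varphi_k^0$, and an induction on the Gram--Schmidt coefficients give $m_{k,k}^{a,R}=(\lambda_k-\lambda_N)+o(1)$ for $k<N$ (strictly negative by \eqref{eq:simple_eigenv_n0}), $m_{k,N}^{a,R}=\overline{m_{N,k}^{a,R}}=O(|a|^{j+1})$ for $k<N$, and $m_{k,n}^{a,R}=O(|a|^2)$ for $k\ne n$, $k,n<N$. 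For the distinguished entry, since $w_{N,R,a}^{ext}=e^{\frac i2(\theta_a-\theta_0^a)}\varphi_N^0$ solves the unperturbed equation, gauge invariance and the normalization \eqref{eq:9} give
\[
\int_\Omega|(i\nabla+A_a)w_{N,R,a}|^2\,dx=\lambda_N+\Big(\int_{D^+_{R|a|}}|(i\nabla+A_a)w_{N,R,a}^{int}|^2\,dx-\int_{D^+_{R|a|}}|(i\nabla+A_0)\varphi_N^0|^2\,dx\Big),
\]
while $\int_\Omega q|w_{N,R,a}|^2\,dx=1+o(|a|^{2j})$ (a consequence of the $H^1$-convergences \eqref{eq:vkint_la}--\eqref{eq:vkext_la}, which improve the crude bounds \eqref{eq:34vint_la}, \eqref{eq:35_la2} to order $|a|^{2j+2}$). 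Rescaling $y=|a|x$, using $A_a(|a|x)=|a|^{-1}A_p(x)$ and the definitions \eqref{eq:85_la} of $U_a^R$ and $W_a$, this becomes
\[
m_{N,N}^{a,R}=|a|^{2j}\Big(\int_{D^+_R}|(i\nabla+A_p)U_a^R|^2\,dx-\int_{D^+_R}|(i\nabla+A_0)W_a|^2\,dx\Big)+o(|a|^{2j}).
\]
Since the couplings with the $N$-th index are $O(|a|^{j+1})=o(|a|^j)$ and the other diagonal entries are bounded away from $0$ by negative constants, \cite[Lemma~6.1]{abatangelo2015sharp} shows that the above maximum equals $m_{N,N}^{a,R}+o(|a|^{2j})$; hence
\[
\frac{\lambda_N-\lambda_N^a}{|a|^{2j}}\ge \int_{D^+_R}|(i\nabla+A_0)W_a|^2\,dx-\int_{D^+_R}|(i\nabla+A_p)U_a^R|^2\,dx+o(1)=:g_R(a),
\]
and $g_R(a)=O(1)$ because both integrals converge as $|a|\to0^+$.

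To compute $\lim_{|a|\to0^+}g_R(a)$ I would pass to the limit using $U_a^R\to\beta u_R$ in $H^{1,p}(D_R^+,\C)$ from \eqref{eq:vkint_la}, $W_a\to\beta e^{\frac i2\tilde\theta_0}\psi_j$ in $H^{1,0}(D_R^+,\C)$ from \eqref{eq:vkext_la}, and the identity $(i\nabla+A_0)(e^{\frac i2\tilde\theta_0}\psi_j)=i\,e^{\frac i2\tilde\theta_0}\nabla\psi_j$, which give $\lim_{|a|\to0^+}g_R(a)=|\beta|^2\big(\int_{D_R^+}|\nabla\psi_j|^2\,dx-\int_{D_R^+}|(i\nabla+A_p)u_R|^2\,dx\big)$. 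Since $(i\nabla+A_p)^2u_R=0$ and $-\Delta\psi_j=0$ in $D_R^+$ and both $u_R$ and $\psi_j$ vanish on $\partial\R^2_+\cap D_R$, the (magnetic) Green identity gives $\int_{D_R^+}|(i\nabla+A_p)u_R|^2\,dx=-i\int_{\partial D_R^+}\overline{u_R}\,(i\nabla+A_p)u_R\cdot\nu\,ds$ and $\int_{D_R^+}|\nabla\psi_j|^2\,dx=-i\int_{\partial D_R^+}\psi_j\,(i\nabla)\psi_j\cdot\nu\,ds$. Inserting $\overline{u_R}=e^{-\frac i2(\theta_p-\theta_0^p)}e^{-\frac i2\tilde\theta_0}\psi_j$ on $\partial D_R^+$ (legitimate since $R>1$, so $\partial D_R^+$ avoids the segment where this phase jumps) and comparing with \eqref{eq:tildekappa_R} yields $\int_{D_R^+}|\nabla\psi_j|^2\,dx-\int_{D_R^+}|(i\nabla+A_p)u_R|^2\,dx=i\tilde\kappa_R$, whence $\lim_{|a|\to0^+}g_R(a)=i|\beta|^2\tilde\kappa_R$, as claimed.

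The main obstacle is the matrix estimate in the second step: the entries of $(m_{k,n}^{a,R})$ must be controlled uniformly in $|a|$, and one needs the sharp orders $O(|a|^{j+1})$ for the couplings with the distinguished mode and $1+o(|a|^{2j})$ for the normalization integral — which forces the use of the rescaled $H^1$-convergences rather than the cruder pointwise energy bounds — together with propagating the Gram--Schmidt errors through the induction. By contrast, once these orders are in hand, the application of the linear-algebra lemma and the final integration by parts identifying $\tilde\kappa_R$ are short.
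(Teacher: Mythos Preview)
Your proposal is correct and follows essentially the same approach as the paper: Gram--Schmidt orthonormalization of $\{w_{k,R,a}\}$, Courant--Fischer for $\lambda_N^a$, the same matrix-entry estimates (the paper calls them $h_{k,n}^{a,R}$, with $\int_\Omega q|w_{N,R,a}|^2\,dx=1+O(|a|^{2j+2})$ and $h_{k,N}^{a,R}=O(|a|^{1+j})$), and the linear-algebra reduction via \cite[Lemma~6.1]{abatangelo2015sharp}. Your final step, converting $\int_{D_R^+}|\nabla\psi_j|^2-\int_{D_R^+}|(i\nabla+A_p)u_R|^2$ into $i\tilde\kappa_R$ by the magnetic Green identity and the boundary condition of $u_R$, is exactly what the paper does in one line when passing from the volume expression to $-i|a|^{2j}|\beta|^2(\tilde\kappa_R+o(1))$.
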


\begin{proof}
Let  $\{\tilde w_{k,R,a}\}_{k=1,\dots,N}$ be the family of functions
resulting from $\{w_{k,R,a}\}$  
by the weighted Gram--Schmidt process
\[ 
\tilde w_{k,R,a}:= \dfrac{\hat w_{k,R,a}}{\sqrt{\int_\Omega q\,| \hat w_{k,R,a}|^2\,dx}}, \quad k=1,\ldots, N, 
\]
where $\hat w_{N,R,a} := w_{N,R,a}$ and, for $k=1,\ldots, N-1$,
$\hat w_{k,R,a} := w_{k,R,a} - \sum_{\ell= k+1}^{N} c_{\ell,k}^{R,a} \hat w_{\ell,R,a}$, 
with 
\[
 c_{\ell,k}^{R,a} :=
\dfrac{\int_\Omega
  q\, w_{k,R,a}  \overline{\hat w_{\ell,R,a}}\,dx}{
\int_\Omega
  q \,|\hat w_{\ell,R,a}|^2\,dx}.
 \]
By construction, there hold 
\begin{equation}\label{eq:gs-w}
\int_\Omega q|\tilde w_{k,R,a}|^2\,dx=1\text{ for all } 1 \leq k \leq N \quad\text{and}\quad
\int_\Omega q\, \tilde w_{k,R,a}\overline{\tilde w_{\ell,R,a}}\,dx=0\text{ for all }k\neq\ell.
\end{equation}
From \eqref{eq:orthonormality}, \eqref{eq:35_la2},
\eqref{eq:34vint_la}, \eqref{eq:vkext_la}, and \eqref{eq:vkint_la},
and an induction argument,it follows that
\begin{align}
&\label{eq:18w}
\int_\Omega
  q \,|\hat w_{k,R,a}|^2=1+O(|a|^{4})
\quad\text{and}\quad 
c_{\ell,k}^{R,a}=O(|a|^{4})\text{ for }\ell\neq k\quad\text{as }
|a|\to 0^+,\\
&\label{eq:13w}
\int_\Omega
  q \,|\hat w_{N,R,a}|^2\,dx=
\int_\Omega
  q \,|w_{N,R,a}|^2\,dx
=1+O\big(|a|^{2j+2}\big)
\quad\text{as }|a|\to 0^+,\\
\label{eq:25}&c_{N,k}^{R,a}= O\big(|a|^{3+j}\big)
\quad\text{as }|a|\to 0^+,\quad\text{for all }k<N.
\end{align}
From the classical 
Courant-Fisher minimax characterization of eigenvalues
and \eqref{eq:gs-w} it follows that 
\begin{equation*}
  \lambda_N^a \leq \max_
  {\substack{(\alpha_1,\dots, \alpha_{N})\in \C^{N}\\
      \sum_{k=1}^{{N}}|\alpha_k|^2 =1}}
  \int_{\Omega} \bigg|(i\nabla+A_a) \bigg(\sum_{k=1}^{{N}}\alpha_k
     \tilde w_{k,R,a}\bigg)\bigg|^2 dx,
\end{equation*}
so that 
\begin{equation}\label{eq:107w}
\lambda_N^a-\lambda_N\leq \max_
  {\substack{(\alpha_1,\dots, \alpha_{N})\in \C^{N}\\
      \sum_{k=1}^{{N}}|\alpha_k|^2 =1}}\sum_{k,n=1}^{N}
h_{k,n}^{a,R}\alpha_k \overline{\alpha_n},
\end{equation}
where 
\[
h_{k,n}^{a,R}= \int_{\Omega} (i\nabla+A_a)    \tilde w_{k,R,a}\cdot
\overline{(i\nabla+A_a)    \tilde w_{n,R,a}}\, dx
-\lambda_N \delta_{kn}.
\]
From \eqref{eq:13w}, \eqref{eq:85_la}, \eqref{eq:vkext_la}, and
\eqref{eq:vkint_la} it follows that 
\begin{align*}
 \notag h_{N,N}^{a,R} &= \dfrac{\lambda_N (1-
\int_\Omega
  q \,|w_{N,R,a}|^2\,dx)}{\int_\Omega
  q \,|w_{N,R,a}|^2\,dx} \\
&\notag\qquad+
  \dfrac{\left( \int_{D^+_{R|a|}} \big| (i\nabla +A_a)w_{N,R,a}^{int}
      \big|^2 dx
      - \int_{D^+_{R|a|}} \big| (i\nabla +A_0)\varphi_N^0 \big|^2 dx\right)}{\int_\Omega
  q \,|w_{N,R,a}|^2\,dx} \\
  \notag &=
   |a|^{2j}\bigg(\int_{D^+_R}|(i\nabla+A_p)U_a^R|^2\,dx-\int_{D^+_R}|(i\nabla+A_{0})W_a|^2\,dx+o(1)\bigg)\\
&\notag =
  |a|^{2j}|\beta|^2\bigg(\int_{D^+_R}|(i\nabla+A_p)u_R|^2\,dx-\int_{D^+_R}|\nabla\psi_j|^2\,dx+o(1)\bigg)\\
&=
-i  |a|^{2j}|\beta|^2(\tilde \kappa_R+o(1))
\end{align*}
as 
$|a|\to0^+$, with $\tilde\kappa_R$ as in \eqref{eq:tildekappa_R}.
From 
 \eqref{eq:18w}, \eqref{eq:25},
\eqref{eq:35vint_la}, and \eqref{eq:35_la1}, we obtain that, if $k<N$, 
\[
  h_{k,k}^{a,R}=(\lambda_k-\lambda_N)+o(1)\quad\text{as }|a|\to0.
\]
We observe that from \eqref{eq:simple_eigenv_n0} it follows that
$\lambda_k-\lambda_N<0$ for all $k<N$.

From 
\eqref{eq:vkext_la}, \eqref{eq:vkint_la}, \eqref{eq:35_la1}, \eqref{eq:35vint_la}, 
\eqref{eq:18w}, and \eqref{eq:13w}
we deduce that, for all $k<N$,
\[
  h_{k,N}^{a,R}=O(|a|^{1+j})\quad\text{and}\quad 
h_{N,k}^{a,R}=\overline{h_{k,N}^{a,R}}=O(|a|^{1+j})
\]
as 
$|a|\to 0^+$. Moreover, from  \eqref{eq:35_la1} and \eqref{eq:35vint_la} we have that, for all $k,n<N$ with $k\neq n$,
\[
  h_{k,n}^{a,R}=O(|a|^{2}) \quad\text{as }|a|\to0.
\]
Using \cite[Lemma 6.1]{abatangelo2015sharp} we can  conclude that 
\[
 \max_
  {\substack{(\alpha_1,\dots, \alpha_{N})\in \C^{N}\\
      \sum_{k=1}^{{N}}|\alpha_k|^2 =1}}\sum_{k,n=1}^{N}
h_{k,n}^{a,R}\alpha_k \overline{\alpha_n}=|a|^{2j}(-i  |\beta|^2\tilde \kappa_R+o(1))
\]
as $|a|\to 0^+$. The conclusion then follows from \eqref{eq:107w}.
\end{proof}

A combination of  Lemmas \ref{l:stima_Lambda0_sopra} and
\ref{l:stima_Lambda0_sotto} with Corollary \ref{l:Hbelow} yields the following preliminary
estimates of the eigenvalue variation.
\begin{corollary}\label{cor:preliminare_sottosopra}
  Let $p\in{\mathbb S}^1_+$.  Then
  \begin{enumerate}[\rm (i)]
  \item $|\lambda_N-\lambda_N^a|=O(1)\max\{H_a,|a|^{2j}\}$ as
    $a=|a|p\to0$;
\item $|\lambda_N-\lambda_N^a|=O(H_a^{{j}/{(j+\delta)}})$ as
    $a=|a|p\to0$.
  \end{enumerate}

\end{corollary}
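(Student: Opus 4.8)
The plan is to obtain Corollary~\ref{cor:preliminare_sottosopra} as a purely formal consequence of the one-sided bounds established in Lemmas~\ref{l:stima_Lambda0_sopra} and~\ref{l:stima_Lambda0_sotto}, combined with the two-sided control of $H_a$ provided by Corollary~\ref{l:Hbelow}; no new analytic input is required.

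First I would fix $p\in{\mathbb S}^1_+$ and a value $R>\tilde R$ as in Lemma~\ref{l:stima_Lambda0_sopra}, and work with $a=|a|p$ and $|a|$ small. From Lemma~\ref{l:stima_Lambda0_sopra} we have $\lambda_N-\lambda_N^a\leq f_R(a)\,H_a$ with $f_R(a)=O(1)$, hence there is a constant $C>0$, independent of $|a|$, with
\begin{equation*}
\lambda_N-\lambda_N^a\leq C\,H_a .
\end{equation*}
From Lemma~\ref{l:stima_Lambda0_sotto} we have $\lambda_N-\lambda_N^a\geq g_R(a)\,|a|^{2j}$, and since $g_R(a)$ converges as $|a|\to0^+$ it is in particular bounded, so (enlarging $C$ if necessary)
\begin{equation*}
\lambda_N-\lambda_N^a\geq -C\,|a|^{2j} .
\end{equation*}
Putting the two displays together yields $|\lambda_N-\lambda_N^a|\leq C\max\{H_a,|a|^{2j}\}$, which is statement~(i).

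For statement~(ii) I would invoke Corollary~\ref{l:Hbelow}. The bound $H_a\geq C|a|^{2(j+\delta)}$ rewrites as $|a|^{2j}\leq C^{-j/(j+\delta)}\,H_a^{\,j/(j+\delta)}$, while the bound $H_a=O(|a|^2)$ together with $j/(j+\delta)\in(0,1)$ gives $H_a\leq H_a^{\,j/(j+\delta)}$ once $|a|$ is small enough that $H_a\leq1$. Hence $\max\{H_a,|a|^{2j}\}\leq C'\,H_a^{\,j/(j+\delta)}$ for a suitable constant $C'$, and substituting this into~(i) gives~(ii).

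The argument presents no real obstacle: the substance lies entirely in Lemmas~\ref{l:stima_Lambda0_sopra}--\ref{l:stima_Lambda0_sotto} and Corollary~\ref{l:Hbelow}, and the present corollary only repackages them. The one point deserving attention is the bookkeeping of the constants: all constants above are independent of $|a|$ but may depend on $p$, $R$ and $\delta$, and all the inequalities hold only for $|a|$ below a threshold (coming from Lemmas~\ref{l:stima_Lambda0_sopra}--\ref{l:stima_Lambda0_sotto} and from $H_a\to0$), which is precisely the regime in which the corollary is stated.
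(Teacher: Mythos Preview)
Your proposal is correct and follows essentially the same approach as the paper: the paper derives the two-sided bound $c_p|a|^{2j}\leq \lambda_N-\lambda_N^a\leq d_p H_a$ from Lemmas~\ref{l:stima_Lambda0_sopra}--\ref{l:stima_Lambda0_sotto}, deduces~(i) directly, and then combines~(i) with \eqref{eq:stima_sotto_radiceH} to get~(ii). You spell out one detail the paper leaves implicit, namely that $H_a\leq H_a^{j/(j+\delta)}$ requires $H_a\leq 1$ eventually (which follows from \eqref{eq:stima_sopra_radiceH}); this is a minor but welcome clarification.
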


\begin{proof}
As a direct consequence of Lemmas \ref{l:stima_Lambda0_sopra} and
\ref{l:stima_Lambda0_sotto}, we obtain that there exist $c_p,d_p\in\R$ such that, if $a=|a|p$ with $|a|$
sufficiently small, then
\begin{equation}\label{eq:cpdp}
c_p|a|^{2j}\leq \lambda_N-\lambda_N^a\leq d_p H_a.
\end{equation}
We notice that, up to now, we still do not have any indication of the
sign of the constants $c_p,d_p$. Estimate (i) follows directly from
\eqref{eq:cpdp}. Estimate (ii) follows combining (i) with \eqref{eq:stima_sotto_radiceH}.
\end{proof}

\begin{lemma} \label{l:lemmakappaR}
Let $\tilde\kappa_R$ be as in \eqref{eq:tildekappa_R}. Then,
\begin{align*} 
\lim_{R \to + \infty} \tilde\kappa_R = 2 i \mathfrak{m}_p,
\end{align*}
with $\mathfrak{m}_p$ as in \eqref{eq:m}.
\end{lemma}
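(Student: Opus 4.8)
The plan is to gauge away the Aharonov–Bohm potential on the outer region, reduce $\tilde\kappa_R$ to a boundary integral of a single Fourier mode of an explicit harmonic function, and then let $R\to+\infty$ using the convergence $u_R\to\Psi_p$ of Lemma~\ref{l:conv_w_r} together with the modal structure of $w_p$ established in Proposition~\ref{prop:relationmp}.

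First I would set $v_R:=e^{-\frac i2(\theta_{p}-\theta_0^{p}+\tilde\theta_0)}u_R$, with $u_R$ as in \eqref{eq:equazione_w_R}. Since $\nabla\big(\tfrac12(\theta_p-\theta_0^p+\tilde\theta_0)\big)=A_p$ off the segment $\{tp:t\in[0,1]\}$ and the negative $x_1$-axis, this phase is smooth on the crack-free half-annulus $D_R^+\setminus\overline{D_1^+}$; there $v_R$ is harmonic, vanishes on $\{x_1=0\}$, and equals $\psi_j$ on $\partial D_R^+$ (by the boundary datum in \eqref{eq:equazione_w_R}), and moreover $e^{-\frac i2(\theta_{p}-\theta_0^{p})}e^{-\frac i2\tilde\theta_0}(i\nabla+A_p)u_R=i\nabla v_R$ there. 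Writing $z_R:=v_R-\psi_j$, which is harmonic in $D_R^+\setminus\overline{D_1^+}$, vanishes on the flat part of the boundary and on $\partial D_R^+$, the integrand in \eqref{eq:tildekappa_R} becomes $i\,\frac{\partial z_R}{\partial\nu}$, so that $\tilde\kappa_R=i\int_{\partial D_R^+}\frac{\partial z_R}{\partial\nu}\,\psi_j\,ds$.

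Next I would separate variables in the half-annulus. Since $z_R$ is harmonic there and vanishes on $\{x_1=0\}$, its component $\zeta_R(r):=\int_{-\pi/2}^{\pi/2}z_R(r\cos t,r\sin t)\sin(j(\tfrac{\pi}{2}-t))\,dt$ solves the Euler equation $r^2\zeta_R''+r\zeta_R'-j^2\zeta_R=0$ for $r\in[1,R]$, hence $\zeta_R(r)=A_Rr^j+B_Rr^{-j}$. Because $\psi_j(r\cos t,r\sin t)=r^j\sin(j(\tfrac{\pi}{2}-t))$, one has $\int_{\partial D_R^+}\frac{\partial z_R}{\partial\nu}\psi_j\,ds=R^{j+1}\zeta_R'(R)$, while $z_R=0$ on $\partial D_R^+$ forces $\zeta_R(R)=0$, i.e.\ $A_R=-B_RR^{-2j}$; a one-line computation then gives $\zeta_R'(R)=-2jB_RR^{-j-1}$, so $\tilde\kappa_R=-2ijB_R$.

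It remains to compute $\lim_{R\to+\infty}B_R$. By Lemma~\ref{l:conv_w_r} and the identity $\Psi_p=e^{\frac i2(\theta_{p}-\theta_0^{p}+\tilde\theta_0)}(w_p+\psi_j)$ from \eqref{eq:Psip}, the functions $z_R$ converge to $w_p$ in $H^1$ on a fixed half-annulus about any circle $\{|x|=r_0\}$ with $r_0>1$; evaluating $\zeta_R(r_0)=B_R(r_0^{-j}-R^{-2j}r_0^j)$ and passing to the limit, using $\int_{-\pi/2}^{\pi/2}w_p(r_0\cos t,r_0\sin t)\sin(j(\tfrac{\pi}{2}-t))\,dt=\omega_p(r_0)=\omega_p(1)r_0^{-j}$ from Proposition~\ref{prop:relationmp} and \eqref{eq:omega_pj_def}, yields $B_R\to\omega_p(1)$. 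Hence $\tilde\kappa_R\to-2ij\,\omega_p(1)=2i\,\mathfrak m_p$, again by Proposition~\ref{prop:relationmp}. The point needing a little care is the regularity of $z_R$ up to $\partial D_R^+$ (handled by elliptic regularity in the crack-free region $D_R^+\setminus\overline{D_1^+}$, where also $w_p=O(|x|^{-1})$ by \eqref{eq:Psip4}) and the fact that the circle $\partial D_1^+$ passes through the crack tip $p$; this is why the limit is extracted along a fixed radius $r_0>1$, so that the relevant circle and the modal expansion stay away from $p$.
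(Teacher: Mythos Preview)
Your proof is correct and follows essentially the same route as the paper's: gauge away $A_p$ to obtain a harmonic function on the half-annulus, project onto the angular mode $\sin(j(\tfrac\pi2-t))$, solve the resulting Euler ODE, and pass to the limit via Lemma~\ref{l:conv_w_r} and Proposition~\ref{prop:relationmp}. The only differences are cosmetic: you subtract $\psi_j$ at the outset (so the outer boundary condition becomes $\zeta_R(R)=0$, slightly streamlining the algebra), and you read off the limit at a fixed radius $r_0>1$ rather than at $r=1$; the paper works directly with the Fourier coefficient $\varphi_R$ of $v_R$ and evaluates at $r=1$, which is equally legitimate since the $H^{1,p}$-convergence of $u_R$ in Lemma~\ref{l:conv_w_r} controls the $L^2$-trace on $\partial D_1^+$.
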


\begin{proof}
First, for simplicity, we rename
\[
v_R = e^{-\frac{i}{2}(\theta_p - \theta_0^p)} e^{- \frac{i}{2}\tilde{\theta}_0} u_R,
\]
where $u_R$ is the unique solution of \eqref{eq:equazione_w_R}. Let's
introduce the function
\[
\varphi_R(r) = \int_{-\frac{\pi}{2}}^{\frac{\pi}{2}} v_R( r \cos t, r \sin t) \sin \left( j \left( \tfrac{\pi}{2} - t \right) \right) \, dt, \quad r > 1.
\]
By direct calculations, it is easy to verify that, since $-\Delta
v_R=0$ in $D_R^+\setminus D_1^+$, $\varphi_R$ satisfies
\begin{equation}\label{eq:22}
- \left( r^{1+2j} \left( r^{-j} \varphi_R(r) \right)' \right)' = 0, \quad \text{ for } r \in (1, R].
\end{equation}
Since $v_R = \psi_j$ on $\partial D_R^+$, we have that
\[
\varphi_R(R) = \frac{\pi}{2} R^j.
\]
Hence, by integrating \eqref{eq:22} over $(1,r)$, 
we get
\[
\varphi_R(r) = \frac{\frac{\pi}{2} - \varphi_R(1) R^{-2j}}{1 - R^{-2j}} r^j + \frac{\varphi_R(1) - \frac{\pi}{2}}{1 - R^{-2j}} r^{-j}, \quad r \in (1, R].
\]
By differentiation of the previous identity, we obtain that
\begin{align} \label{eq:varphiRprime}
\varphi_R'(R) = \frac{j R^{j-1}}{1 - R^{-2j}} \left( \frac{\pi}{2} (1 + R^{-2j}) - 2 \varphi_R(1) R^{-2j} \right).
\end{align}
On the other hand
\begin{align} \label{eq:varphiRprime2}
i \varphi_R'(R) = \frac{i}{R^{j+1}} \int_{\partial D_R^+} \frac{\partial v_R}{\partial \nu} \psi_j \, ds.
\end{align}
By combining \eqref{eq:varphiRprime} and \eqref{eq:varphiRprime2} we get
\begin{align} \label{eq:firstterm}
i \int_{\partial D_R^+} \frac{\partial v_R}{\partial \nu} \psi_j \, ds = \frac{i j}{1 - R^{-2j}} \left( \frac{\pi}{2} R^{2j} + \frac{\pi}{2} - 2 \varphi_R(1) \right).
\end{align}
The second term of the right hand side of \eqref{eq:tildekappa_R} can be calculated explicitly:
\begin{align} \label{eq:secondterm}
i \int_{\partial D_R^+} \frac{\partial \psi_j}{\partial \nu} \psi_j \, ds =i j \frac{\pi}{2} R^{2j}.
\end{align}
From \eqref{eq:firstterm}, \eqref{eq:secondterm} and \eqref{eq:tildekappa_R} it follows that
\begin{equation}\label{eq:24}
\tilde{\kappa}_R = \frac{i j}{1 - R^{-2j}} \left( - 2 \varphi_R(1) + \pi \right).
\end{equation}
Finally, Lemma \ref{l:conv_w_r} and Proposition \ref{prop:relationmp} imply that
\[
\lim_{R \to + \infty} \varphi_R(1) = \omega_p(1)+\frac\pi2=- \frac{\mathfrak{m}_p}{j} + \frac{\pi}{2}.
\]
This allows passing to the limit in \eqref{eq:24} thus getting the conclusion.
\end{proof}

\section{Energy estimates for the eigenfunction variation} \label{sec:energyestimates}

This section aims at providing some energy estimates for the
function $v_{N,R,a}$ defined in \eqref{eq:vNRa},
in order to improve the estimates on $H_a$ collected in Lemma
\ref{l:Hbelow}.

Throughout this section, we will regard the space
$H^1_0(\Omega,\C)$ (which coincides with $H^{1,0}_0(\Omega,\C)$, see \eqref{eq:equivalent_spaces}) as a real
Hilbert space endowed with the scalar product
\[
(u,v)_{H^{1,0}_{0,\R}(\Omega,\C)}=\Re\bigg(\int_\Omega (i\nabla+A_0) u
\cdot \overline{(i\nabla+A_0) v}\,dx\bigg),
\]
which induces on  $H^{1}_0(\Omega,\C)$ the norm \eqref{eq:norma} (with
$a=0$), which is equivalent to the Dirichlet norm, as observed in \eqref{eq:equivalent_spaces}.
To take in mind that here $H^{1}_0(\Omega,\C)$ is treated as a
vector space over $\R$, we denote it as $H^{1}_{0,\R}(\Omega,\C)$ and
its real dual space as  $(H^{1}_{0,\R}(\Omega,\C))^\star$.

Let us consider the function
\begin{align}\label{def_operatore_F}
 & F: \C \times H^{1}_{0,\R}(\Omega,\C)  \to  \R \times \R \times (H^{1}_{0,\R}(\Omega,\C))^\star\\
\notag  &F(\lambda,\varphi)= \Big( {\textstyle{
\|u\|_{H^{1,0}_0(\Omega,\C)}^2 -\lambda_N,\
      \mathfrak{Im}\big(\int_{\Omega}
      q(x)\varphi\overline{\varphi_N^0}\,dx\big), \ (i\nabla +A_0)^2
      \varphi-\lambda q \varphi}}\Big),
\end{align}
   where  $(i\nabla +A_0)^2  \varphi-\lambda \varphi\in (H^{1}_{0,\R}(\Omega,\C))^\star$ acts as 
\[
\phantom{a}_{(H^{1}_{0,\R}(\Omega,\C))^\star}\!\Big\langle (i\nabla
+A_0)^2 \varphi-\lambda q \varphi , u
\Big\rangle_{\!H^{1}_{0,\R}(\Omega,\C)}\!\!=\mathfrak{Re}
\left({\textstyle{\int_{\Omega}(i\nabla+A_0)\varphi\cdot\overline{(i\nabla+A_0)u}\,dx
      -\!\lambda \!\int_{\Omega} q \varphi\overline{u} \,dx}}\right)
\]
for all $\varphi\in H^{1}_{0,\R}(\Omega,\C)$.  In
\eqref{def_operatore_F} $\C$ is also meant as a vector space over
$\R$. From $(E_0)$ and \eqref{eq:9}, we have that $F(\lambda_N,\varphi_N^0)=(0,0,0)$.

\begin{lemma}\label{F_frechet}
  The function $F$ defined in \eqref{def_operatore_F}
  is Fr\'{e}chet-differentiable at $(\lambda_N,\varphi_N^0)$ and its
  Fr\'{e}chet-differential
$dF(\lambda_N,\varphi_N^0)\in \mathcal L\big( \C \times
H^{1}_{0,\R}(\Omega,\C),\R\times \R \times (H^{1}_{0,\R}(\Omega,\C))^\star\big)$
is invertible.
\end{lemma}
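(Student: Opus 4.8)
The plan is first to observe that $F$ is a polynomial map between Banach spaces, hence of class $C^\infty$ and in particular Fréchet-differentiable: its first component is a real quadratic form in $\varphi$ shifted by the constant $\lambda_N$, its second component is a bounded real-linear functional of $\varphi$, and its third component is the sum of the bounded linear map $\varphi\mapsto (i\nabla+A_0)^2\varphi$ into $(H^1_{0,\R}(\Omega,\C))^\star$ and the bounded real-bilinear map $(\lambda,\varphi)\mapsto -\lambda q\varphi$; boundedness of all these (multi)linear maps follows from the equivalence of the norms \eqref{eq:norm_def} and \eqref{eq:norma}, the compact embedding $H^1_0(\Omega,\C)\hookrightarrow L^2(\Omega,\C)$, and $q\in L^\infty(\Omega)$. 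A direct computation then gives, for every $(\mu,v)\in\C\times H^1_{0,\R}(\Omega,\C)$,
\[
dF(\lambda_N,\varphi_N^0)(\mu,v)=\Big(2(\varphi_N^0,v)_{H^{1,0}_{0,\R}(\Omega,\C)},\ \Im\big({\textstyle\int_\Omega} q\,v\,\overline{\varphi_N^0}\,dx\big),\ (i\nabla+A_0)^2 v-\lambda_N q\,v-\mu q\,\varphi_N^0\Big).
\]
Since this is a bounded linear operator between Banach spaces, by the open mapping (bounded inverse) theorem it suffices to prove that it is bijective.

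For injectivity, I would assume $dF(\lambda_N,\varphi_N^0)(\mu,v)=(0,0,0)$ and test the third (distributional) identity against $\varphi_N^0$ and against $i\varphi_N^0$. Since $\varphi_N^0$ solves $(E_0)$, by the symmetry of the associated sesquilinear form the terms involving $v$ cancel, and what is left, combined with the normalization \eqref{eq:9}, yields $\Re\mu=\Im\mu=0$, hence $\mu=0$. Then $v$ weakly solves $(i\nabla+A_0)^2 v=\lambda_N q\,v$ in $\Omega$ with $v=0$ on $\partial\Omega$, so by the simplicity assumption \eqref{eq:simple_eigenv_n0} there is $c\in\C$ with $v=c\,\varphi_N^0$; inserting this in the first component and using $\|\varphi_N^0\|_{H^{1,0}_0(\Omega,\C)}^2=\lambda_N>0$ gives $\Re c=0$, while the second component together with \eqref{eq:9} gives $\Im c=0$, so $v=0$.

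For surjectivity, given $(\alpha,\beta,\Phi)\in\R\times\R\times(H^1_{0,\R}(\Omega,\C))^\star$, I would proceed as follows. Through the Riesz isomorphism of the real Hilbert space $H^1_{0,\R}(\Omega,\C)$, the map $v\mapsto(i\nabla+A_0)^2 v-\lambda_N q\,v$ takes the form $\mathrm{Id}-\lambda_N\mathcal K$ with $\mathcal K$ compact and self-adjoint, whose kernel is $\mathrm{span}_{\R}\{\varphi_N^0,i\varphi_N^0\}$ by \eqref{eq:simple_eigenv_n0}. By the Fredholm alternative the equation $(i\nabla+A_0)^2 v-\lambda_N q\,v=\Phi+\mu q\,\varphi_N^0$ is solvable in $v$ if and only if $\Phi+\mu q\,\varphi_N^0$ annihilates both $\varphi_N^0$ and $i\varphi_N^0$; by \eqref{eq:9} these are precisely two real equations, which determine $\mu\in\C$ uniquely. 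The corresponding $v$ is then unique modulo $\C\varphi_N^0$, and exactly as in the injectivity step (using $\lambda_N>0$ and \eqref{eq:9}) the first and second components fix the remaining additive constant. This yields the desired preimage $(\mu,v)$, and uniqueness of $(\mu,v)$ also re-proves injectivity.

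The main point requiring care is keeping the real and complex structures apart: $\C\times H^1_{0,\R}(\Omega,\C)$ is a real Banach space, the parameter $\mu$ spans a two-real-dimensional direction, and both the kernel of $(i\nabla+A_0)^2-\lambda_N q$ and the scalar constraints must be counted over $\R$. Once the Fredholm alternative is applied in the real category and \eqref{eq:9} is used to evaluate the pairings with $\varphi_N^0$ and $i\varphi_N^0$, all dimension counts match and both injectivity and surjectivity follow.
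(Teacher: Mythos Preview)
Your proof is correct and follows exactly the approach the paper has in mind: the paper's own proof merely states that the result follows from the Fredholm alternative and the simplicity assumption \eqref{eq:simple_eigenv_n0}, referring to \cite[Lemma 7.1]{abatangelo2015sharp} for the details. Your argument is precisely a careful write-up of that standard Fredholm scheme, including the correct handling of the real structure (kernel $\mathrm{span}_{\R}\{\varphi_N^0,i\varphi_N^0\}$, two real solvability conditions fixing $\mu\in\C$, and the two remaining real constraints fixing the additive constant $c\in\C$).
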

\begin{proof}
The proof follows from the Fredholm
alternative and assumption \eqref{eq:simple_eigenv_n0} by quite
standard arguments, see \cite[Lemma 7.1]{abatangelo2015sharp} for
details for a similar operator.
\end{proof}

\begin{theorem}\label{stima_teo_inversione}
Let $p\in{\mathbb S}^1_+$ and $R>\tilde R$, being $\tilde R$ as in
Lemma \ref{l:stima_Lambda0_sopra}. For $a=|a|p$ with $|a|<\tfrac{\bar
  r}{R}$, let $v_{N,R,a}$ be as defined in  \eqref{eq:vNRa}. 
 Then
$\|v_{N,R,a} -
\varphi_N^0\|_{H^{1,0}_0(\Omega,\C)}=O\left(\sqrt{H_a}\right)$
as $|a|\to0^+$.
\end{theorem}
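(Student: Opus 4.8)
The plan is to exploit Lemma~\ref{F_frechet}, namely the invertibility of $dF(\lambda_N,\varphi_N^0)$, together with a quantitative estimate on how much $v_{N,R,a}$ fails to solve the equation $F=0$. First I would set $\mu_a:=\lambda_N^a$ and regard the pair $(\mu_a,v_{N,R,a})$ as an approximate zero of $F$. By the local invertibility of $F$ near $(\lambda_N,\varphi_N^0)$ (a consequence of Lemma~\ref{F_frechet} via the implicit function theorem / a Lipschitz-inverse argument), there is a neighborhood in which $\|(\mu_a-\lambda_N, v_{N,R,a}-\varphi_N^0)\|$ is controlled by $\|F(\mu_a,v_{N,R,a})\|_{\R\times\R\times(H^1_{0,\R})^\star}$. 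So the whole problem reduces to estimating the three components of $F(\lambda_N^a,v_{N,R,a})$ and showing each is $O(\sqrt{H_a})$ as $|a|\to0^+$.

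Next I would estimate the three components separately. For the first component, $\|v_{N,R,a}\|^2_{H^{1,0}_0(\Omega,\C)}-\lambda_N$: outside $D^+_{R|a|}$ the function $v_{N,R,a}$ equals $e^{\frac i2(\theta_0^a-\theta_a)}\varphi_N^a$, and a change of gauge shows $\|(i\nabla+A_0)v_{N,R,a}\|^2_{L^2(\Omega\setminus D^+_{R|a|})}=\|(i\nabla+A_a)\varphi_N^a\|^2_{L^2(\Omega\setminus D^+_{R|a|})}=\lambda_N^a-\int_{D^+_{R|a|}}|(i\nabla+A_a)\varphi_N^a|^2dx$; using \eqref{eq:46} this is $\lambda_N^a+O(H_a)$. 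On $D^+_{R|a|}$ the Dirichlet principle gives $\int_{D^+_{R|a|}}|(i\nabla+A_0)v_{N,R,a}^{int}|^2dx=O(H_a)$ via \eqref{eq:46zar}. Combining with \eqref{eq:conv_autov} ($\lambda_N^a-\lambda_N\to0$, and in fact $\lambda_N-\lambda_N^a=O(H_a)$ by Lemma~\ref{l:stima_Lambda0_sopra}) yields the first component is $O(H_a)=O(\sqrt{H_a})$ since $H_a\to0$. For the second component, the normalization \eqref{eq:6} forces $\int_\Omega e^{\frac i2(\theta_0^a-\theta_a)}q\varphi_N^a\overline{\varphi_N^0}\,dx\in\R^+$; writing $v_{N,R,a}-e^{\frac i2(\theta_0^a-\theta_a)}\varphi_N^a$ is supported in $D^+_{R|a|}$ and is $O(\sqrt{H_a})$ in $L^2$ by \eqref{eq:48} and \eqref{eq:48zar}, so $\mathfrak{Im}\big(\int_\Omega q v_{N,R,a}\overline{\varphi_N^0}\,dx\big)=\mathfrak{Im}\big(\int_{D^+_{R|a|}}q(v_{N,R,a}-e^{\frac i2(\theta_0^a-\theta_a)}\varphi_N^a)\overline{\varphi_N^0}\,dx\big)$, which by Cauchy--Schwarz and \eqref{eq:48}, \eqref{eq:48zar} (plus boundedness of $\varphi_N^0$ near $0$, since it vanishes there) is $O(|a|\sqrt{H_a})=o(\sqrt{H_a})$.

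The third component requires testing $(i\nabla+A_0)^2 v_{N,R,a}-\lambda_N^a q v_{N,R,a}$ against an arbitrary $u\in H^1_{0,\R}(\Omega,\C)$ with $\|u\|=1$. Split the integral over $\Omega\setminus D^+_{R|a|}$ and $D^+_{R|a|}$. On $\Omega\setminus D^+_{R|a|}$, $v_{N,R,a}$ is gauge-equivalent to the exact solution $\varphi_N^a$ of $(i\nabla+A_a)^2\varphi_N^a=\lambda_N^a q\varphi_N^a$, so after integration by parts the bulk term vanishes and only a boundary integral on $\partial D^+_{R|a|}$ survives, of the form $\int_{\partial D^+_{R|a|}}\big(\frac{\partial v_{N,R,a}^{ext}}{\partial\nu}-\frac{\partial v_{N,R,a}^{int}}{\partial\nu}\big)\overline u\,ds$ (the jump of the normal derivative across the gluing sphere, since the traces match); on $D^+_{R|a|}$, $v_{N,R,a}^{int}$ is $A_0$-harmonic, so its contribution is the reciprocal boundary term plus $\lambda_N^a\int_{D^+_{R|a|}}q v_{N,R,a}^{int}\overline u\,dx$. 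Hence the third component equals a boundary integral on $\partial D^+_{R|a|}$ against the normal-derivative jump, plus a lower-order bulk term over $D^+_{R|a|}$. The bulk term is bounded by $\lambda_N^a\|q\|_\infty\|v_{N,R,a}^{int}\|_{L^2(D^+_{R|a|})}\|u\|_{L^2(D^+_{R|a|})}=O(|a|^2\sqrt{H_a})$ by \eqref{eq:48zar} and a Poincaré inequality on $D^+_{R|a|}$ for $u$ (giving $\|u\|_{L^2(D^+_{R|a|})}=O(|a|)$). The boundary integral is the main obstacle: by a trace inequality on $\partial D^+_{R|a|}$ and the energy bounds \eqref{eq:46}, \eqref{eq:46zar} on both $\varphi_N^a$ and $v_{N,R,a}^{int}$ — which control $\|\nabla v_{N,R,a}^{ext}\|_{L^2}$ and $\|\nabla v_{N,R,a}^{int}\|_{L^2}$ on the annular/disk region by $O(\sqrt{H_a})$ — one gets $\big|\int_{\partial D^+_{R|a|}}(\cdots)\overline u\,ds\big|\le C\sqrt{H_a}\,\|u\|_{H^1}$; the delicate point is handling the normal derivative of $v_{N,R,a}^{int}$ on the sphere (it is not directly an $H^1$ trace), which one handles by Caccioppoli-type interior estimates for the $A_0$-harmonic function $v_{N,R,a}^{int}$ together with the scaling $x\mapsto|a|x$ that turns $\partial D^+_{R|a|}$ into the fixed sphere $\partial D^+_R$, where \eqref{eq:67} and \eqref{eq:67_zar} provide uniform $H^{1,\cdot}(D^+_R)$ bounds on the rescaled functions $\tilde\varphi_a$ and $Z_a^R$ and hence uniform trace bounds on $\partial D^+_R$. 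Assembling the three estimates, $\|F(\lambda_N^a,v_{N,R,a})\|=O(\sqrt{H_a})$, and the local-inverse bound from Lemma~\ref{F_frechet} gives $\|v_{N,R,a}-\varphi_N^0\|_{H^{1,0}_0(\Omega,\C)}=O(\sqrt{H_a})$ (the bound on $|\lambda_N^a-\lambda_N|$ comes for free but is not needed for the statement). The main difficulty throughout is the careful bookkeeping of the boundary terms on $\partial D^+_{R|a|}$ and verifying that $v_{N,R,a}$ lies in the neighborhood where $F$ is invertible, which follows from \eqref{eq:21} and \eqref{eq:46zar}–\eqref{eq:48zar} since $v_{N,R,a}\to\varphi_N^0$ in $H^1$.
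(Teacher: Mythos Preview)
Your overall strategy---reduce to estimating $\|F(\lambda_N^a,v_{N,R,a})\|$ and invoke the invertibility of $dF(\lambda_N,\varphi_N^0)$---matches the paper's proof. Two points deserve attention, one a genuine gap and one a detour.

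\textbf{First component: a circularity.} You assert that $\lambda_N-\lambda_N^a=O(H_a)$ ``by Lemma~\ref{l:stima_Lambda0_sopra}''. That lemma gives only the \emph{upper} bound $\lambda_N-\lambda_N^a\le {\rm const}\,H_a$; the lower bound from Lemma~\ref{l:stima_Lambda0_sotto} is $\lambda_N-\lambda_N^a\ge c_p|a|^{2j}$, and at this point in the argument you do \emph{not} know that $|a|^{2j}=O(H_a)$---that is precisely Theorem~\ref{conseguenza_stima_teo_inversione}, which is proved \emph{using} the present theorem. The paper avoids this circularity by invoking Corollary~\ref{cor:preliminare_sottosopra}(ii), which gives $|\lambda_N-\lambda_N^a|=O(H_a^{j/(j+\delta)})$ via the crude estimate \eqref{eq:stima_sotto_radiceH}; since $\delta<1\le j$ one has $j/(j+\delta)>1/2$, so this is still $O(\sqrt{H_a})$. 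Your conclusion for $\alpha_a$ survives, but the justification must go through this weaker exponent.

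\textbf{Third component: the paper's route is shorter.} Your plan introduces the normal-derivative jump on $\partial D^+_{R|a|}$ and then worries about pairing it with the trace of $u$, invoking Caccioppoli estimates and scaling to the fixed sphere $\partial D^+_R$. This can be made to work, but it is circuitous: the weak definition of $\langle\partial_\nu v^{int},u\rangle$ \emph{is} the bulk integral $\int_{D^+_{R|a|}}(i\nabla+A_0)v^{int}\cdot\overline{(i\nabla+A_0)u}\,dx$, which is bounded directly by Cauchy--Schwarz and \eqref{eq:46zar}. The paper bypasses all boundary terms by a cleaner trick: for $\varphi\in C^\infty_c(\Omega,\C)$, test the equation for $\varphi_N^a$ against $e^{\frac i2(\theta_a-\theta_0^a)}\varphi\in H^{1,a}_0(\Omega,\C)$ on the whole of $\Omega$, obtaining an identity; subtracting this from the weak pairing for $v_{N,R,a}$ leaves only four bulk integrals over $D^+_{R|a|}$, each estimated in one line via \eqref{eq:46}, \eqref{eq:48}, \eqref{eq:46zar}, \eqref{eq:48zar}. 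No integration by parts, no normal derivatives, no rescaling.
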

\begin{proof}
From \eqref{eq:vNRa}, \eqref{def_blowuppate_normalizzate}, \eqref{eq:zar},
\eqref{eq:85_la}, we have that 
\begin{align*}
   \int_{\Omega}\big|(i\nabla+A_0)(v_{N,R,a}-\varphi_N^0)\big|^2\,dx=&
  \int_{\Omega}| e^{\frac{i}{2}(\theta_0^{{a}} - \theta_a)}
(i\nabla+A_a)\varphi_N^a-
  (i\nabla+A_0)\varphi_N^0|^2\,dx \\
  &\notag +
  H_a\int_{D_{R}^+}\Big|(i\nabla+A_0)\Big(Z_a^{R}-
  \tfrac{|a|^{j}}{\sqrt{H_a}} W_a
  \Big)\Big|^2\,dx\\
  &\notag -
  H_a\int_{D_{R}^+}\Big| e^{\frac{i}{2}(\theta_0^{p} -
    \theta_{p})} (i\nabla+A_{p})\tilde\varphi_a-\tfrac{|a|^{j}}{\sqrt{H_a}}
  (i\nabla+A_0)W_a \Big)\Big|^2\,dx.
\end{align*} 
We can estimate the second term at the right hand side in the
following way 
\begin{multline*}
  H_a\int_{D_{R}^+}\Big|(i\nabla+A_0)\Big(Z_a^{R}-
  \tfrac{|a|^{j}}{\sqrt{H_a}} W_a  \Big)\Big|^2\,dx \\\leq 2H_a \int_{D_{R}^+}\Big|(i\nabla+A_0)Z_a^{R}\Big|^2 
+ 2 |a|^{2j} \int_{D_{R}^+}\Big|(i\nabla+A_0)W_a\Big|^2  = O(|a|^2)
\end{multline*}
as $|a|\to0$, via \eqref{eq:stima_sopra_radiceH}, \eqref{eq:67_zar},  \eqref{eq:vkext_la}.
The estimate of the third term is analogous recalling  \eqref{eq:67} in addition.
In view of \eqref{eq:45}, we thus conclude
that $v_{N,R,a}\to \varphi_N^0$ in
$H^{1}_0(\Omega,\C)$ as $|a|\to 0^+$.
Therefore, we take advantage from Lemma \ref{F_frechet} and expand 
\begin{equation}\label{eq:37_bi}
 F(\lambda_N^a,v_{N,R,a}) = dF(\lambda_N,\varphi_N^0) (\lambda_N^a - \lambda_N, v_{N,R,a} - \varphi_N^0)
+ o\big(|\lambda_N^a - \lambda_N| + \|v_{N,R,a} -
\varphi_N^0\|_{H^{1,0}_0(\Omega,\C)}\big)
\end{equation}
 as $|a|\to 0$.
In view of Lemma \ref{F_frechet}, the operator $dF(\lambda_N,\varphi_N^0)$ is
invertible (and its inverse is continuous by the Open Mapping
Theorem), then from \eqref{eq:37_bi} it follows that 
\begin{multline*}
  |\lambda_N^a - \lambda_N| + \|v_{N,R,a} -
  \varphi_N^0\|_{H^{1}_0(\Omega,\C)}\\
  \leq\|(dF(\lambda_N,\varphi_N^0))^{-1}\|_{ \mathcal L( \R\times \R
    \times (H^{1}_{0,\R}(\Omega,\C))^\star,\C \times
    H^{1}_{0,\R}(\Omega,\C))} \| F(\lambda_N^a,v_{N,R,a})\|_{
    \R\times\R \times (H^{1}_{0,\R}(\Omega))^\star} (1+o(1))
\end{multline*}
as $|a|\to 0^+$.  It remains to estimate the norm of
\begin{align*}
  F&(\lambda_N^a,v_{N,R,a}) =\left( \alpha_a, \beta_a, w_a\right)
  \\
  \notag& = \left( \|v_{N,R,a}\|_{H^{1,0}_0(\Omega,\C)}^2
    -\lambda_N,
    \mathfrak{Im}\left({\textstyle{\int_{\Omega} q v_{N,R,a}\overline{\varphi_N^0}\,dx}}\right),
    (i\nabla+A_0)^2 v_{N,R,a} - \lambda_N^a\,q\, v_{N,R,a} \right)
\end{align*}
in $\R\times\R \times (H^{1}_{0,\R}(\Omega))^\star$.
As far as $\alpha_a$ is concerned, using \eqref{eq:67_zar},
\eqref{eq:67}, and Corollary
\ref{cor:preliminare_sottosopra} (part (ii)),  since $\delta<1\leq j$ we have that
\begin{align*}
 \alpha_a 
 &= \left(
 \int_{ D_{R|a|}^+} |(i\nabla+A_0)v_{N,R,a}^{int}|^2 \,dx-
 \int_{D_{R|a|}^+} |(i\nabla+A_a)\varphi_N^a|^2\,dx 
 \right) +(\lambda_N^a-\lambda_N)\\
 &=  H_a
\left(
 \int_{ D_{R}^+} |(i\nabla+A_0)Z_a^R|^2 \,dx-
 \int_{D_{R}^+} |(i\nabla+A_{p})\tilde\varphi_a|^2\,dx 
 \right) +(\lambda_N^a-\lambda_N)\\
&=O(H_a^{{j}/{(j+\delta)}})= O(\sqrt{H_a}),\quad\text{as }|a|\to0^+.
\end{align*}
As far as $\beta_a$ is concerned, by the normalization in \eqref{eq:6}, 
\eqref{eq:q_assumptions}, \eqref{eq:48zar}, \eqref{eq:48}, and \eqref{eq:asyphi0},
we have that 
\begin{align*}
  \beta_a &= \Im \left( \int_{D_{R|a|}^+} q\,
    v_{N,R,a}^{int}\overline{\varphi_N^0} \,dx- \int_{D_{R|a|}^+} q\,
    e^{\frac i2 (\theta_0^a-\theta_a)} \varphi_N^a\overline{\varphi_N^0}\,dx
    +\int_{\Omega} q \,e^{\frac i2 (\theta_0^a-\theta_a)}
    \varphi_N^a\overline{\varphi_N^0}\,dx
  \right)\\
  &=O( \sqrt{H_a} |a|^{j +2})=
  o\big(\sqrt{H_a}\big),\quad\text{as }|a|\to 0^+.
\end{align*}
Let $\varphi\in C^\infty_c(\Omega,\C)$. Then, if $|a|$ is
sufficiently small, $e^{\frac i2(\theta_a-\theta_0^a)}\varphi\in
H^{1,a}_0(\Omega,\C)$ and then, in view of \eqref{eq_eigenfunction}, 
\[
  0=\int_{\Omega}e^{\frac i2(\theta_0^a-\theta_a)}
  (i\nabla+A_a)\varphi_N^a\cdot\overline{(i\nabla+A_0) \varphi
  }\,dx-\lambda_N^a \int_{\Omega}q\,e^{\frac
    i2(\theta_0^a-\theta_a)} \varphi_N^a\overline{\varphi
  }\,dx.
\]
Hence, by \eqref{eq:vNRa},
\begin{align*}
\int_{\Omega}(i\nabla+A_0)&v_{N,R,a}\cdot\overline{(i\nabla+A_0)\varphi}\,dx
    -\lambda_N^a \int_{\Omega}q\, v_{N,R,a}\overline{\varphi}  \,dx\\
&=  
\int_{ D_{R|a|}^+} (i\nabla+A_0)  v_{N,R,a}^{int}\cdot\overline{(i\nabla+A_0) \varphi}\,dx
    -\lambda_N^a \int_{ D_{R|a|}^+}q  v_{N,R,a}^{int}\overline{\varphi}  \,dx\\
&-\int_{ D_{R|a|}^+}e^{\frac
    i2(\theta_0^a-\theta_a)} (i\nabla+A_a)\varphi_N^a\cdot\overline{(i\nabla+A_0) \varphi}\,dx
    +\lambda_N^a \int_{ D_{R|a|}^+}q\,e^{\frac
    i2(\theta_0^a-\theta_a)}\varphi_N^a\overline{\varphi}  \,dx,
  \end{align*}
  which, in view of \eqref{eq:46}, \eqref{eq:48}, \eqref{eq:46zar},
  \eqref{eq:48zar}, yields 
\[
\sup_{\varphi\in C^\infty_c(\Omega,\C)\setminus\{0\}}\frac{\phantom{a}_{(H^{1}_{0,\R}(\Omega,\C))^\star}\!\Big\langle (i\nabla
+A_0)^2 v_{N,R,a}-\lambda_N^a q v_{N,R,a} , \varphi
\Big\rangle_{\!H^{1}_{0,\R}(\Omega,\C)}}{\|\varphi\|_{H^{1,0}_0(\Omega,\C)}}=O(\sqrt{H_a}),
\]
as $|a|\to0^+$.
By density of $C^\infty_c(\Omega,\C)$ in $H^{1,0}_0(\Omega,\C)$ we
conclude that 
\[
  \|w_a\|_{(H^{1}_{0,\R}(\Omega,\C))^\star}=O\big(\sqrt{H_a}\big),\quad\text{as
  }|a|\to0^+,
\]
thus completing the proof.
\end{proof}

As a consequence of Theorem \ref{stima_teo_inversione}, we
obtain the following improvement of Corollary \ref{l:Hbelow}.
\begin{theorem}\label{conseguenza_stima_teo_inversione}
We have that $|a|^{2j}=O(H_a)$ as $a=|a|p \to0$.
\end{theorem}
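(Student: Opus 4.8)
The plan is to localize the estimate of Theorem~\ref{stima_teo_inversione} to the shrinking half-disk $D_{R|a|}^+$, where $v_{N,R,a}$ coincides with its interior part $v_{N,R,a}^{int}$, and then blow up. Fix once and for all $R>\max\{\tilde R,\bar K\}$. Since $D_{R|a|}^+\subset\Omega$, Theorem~\ref{stima_teo_inversione} gives
\[
\int_{D_{R|a|}^+}\bigl|(i\nabla+A_0)(v_{N,R,a}^{int}-\varphi_N^0)\bigr|^2\,dx
\le\int_{\Omega}\bigl|(i\nabla+A_0)(v_{N,R,a}-\varphi_N^0)\bigr|^2\,dx
=O(H_a)
\]
as $|a|\to0^+$. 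I would then rescale by $x=|a|y$: recalling $Z_a^R(y)=v_{N,R,a}^{int}(|a|y)/\sqrt{H_a}$ from \eqref{eq:zar} and $W_a(y)=\varphi_N^0(|a|y)/|a|^{j}$ from \eqref{eq:85_la}, and using the scale invariance (in dimension two) of the magnetic Dirichlet integral under $y\mapsto|a|y$, which holds because $A_0$ is homogeneous of degree $-1$, the displayed bound becomes
\[
H_a\int_{D_R^+}\Bigl|(i\nabla+A_0)\Bigl(Z_a^R-\tfrac{|a|^{j}}{\sqrt{H_a}}\,W_a\Bigr)\Bigr|^2\,dy=O(H_a),
\]
hence, dividing by $H_a$, $\displaystyle\int_{D_R^+}\bigl|(i\nabla+A_0)(Z_a^R-\tfrac{|a|^{j}}{\sqrt{H_a}}W_a)\bigr|^2\,dy=O(1)$ as $|a|\to0^+$.

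From here the conclusion follows by a triangle inequality in $L^2(D_R^+)$:
\[
\frac{|a|^{j}}{\sqrt{H_a}}\,\bigl\|(i\nabla+A_0)W_a\bigr\|_{L^2(D_R^+)}
\le\bigl\|(i\nabla+A_0)Z_a^R\bigr\|_{L^2(D_R^+)}
+\Bigl\|(i\nabla+A_0)\Bigl(Z_a^R-\tfrac{|a|^{j}}{\sqrt{H_a}}W_a\Bigr)\Bigr\|_{L^2(D_R^+)}=O(1),
\]
where boundedness of the first term on the right is \eqref{eq:46zar} (equivalently \eqref{eq:67_zar}). On the other hand, by \eqref{eq:vkext_la} one has $W_a\to\beta\,e^{\frac i2\tilde\theta_0}\psi_j$ in $H^{1,0}(D_R^+,\C)$, so $\|(i\nabla+A_0)W_a\|_{L^2(D_R^+)}\to|\beta|\,\|\nabla\psi_j\|_{L^2(D_R^+)}$ by the gauge identity $(i\nabla+A_0)(e^{\frac i2\tilde\theta_0}\psi_j)=i\,e^{\frac i2\tilde\theta_0}\nabla\psi_j$, and this limit is strictly positive since $\beta\ne0$ and $\psi_j$ is a nonconstant homogeneous harmonic polynomial of degree $j\ge1$. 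Therefore $|a|^{j}/\sqrt{H_a}$ stays bounded as $a=|a|p\to0$, i.e.\ $|a|^{2j}=O(H_a)$.

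The only delicate point — and the place where the inversion estimate of Theorem~\ref{stima_teo_inversione} is genuinely exploited — is that the global closeness of $v_{N,R,a}$ to $\varphi_N^0$ in $H^{1,0}_0(\Omega,\C)$, once restricted to the vanishingly small half-disk $D_{R|a|}^+$ and blown up to $D_R^+$, forces the intrinsic amplitude $\sqrt{H_a}$ of the rescaled interior function and the intrinsic amplitude $|a|^{j}$ of the rescaled limit eigenfunction $\varphi_N^0$ (dictated by its vanishing order $j$ at $0$) to be comparable. A naive argument via the trace of $v_{N,R,a}-\varphi_N^0$ on the shrinking circle $\partial D_{R|a|}^+$ would fail, because the constant in the trace inequality on $D_{R|a|}^+$ degenerates as $|a|\to0$; it is essential to work with the scale-invariant Dirichlet-type integral on the full half-disk $D_{R|a|}^+$ rather than with boundary traces, which is exactly what the bound $\|v_{N,R,a}-\varphi_N^0\|_{H^{1,0}_0(\Omega,\C)}=O(\sqrt{H_a})$ provides.
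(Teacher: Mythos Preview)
Your argument is correct and follows the same overall strategy as the paper: restrict the global estimate of Theorem~\ref{stima_teo_inversione} to a shrinking region, blow up using the scale invariance of the magnetic Dirichlet integral, and then apply a triangle inequality together with boundedness of the blow-up family and the convergence \eqref{eq:vkext_la} of $W_a$. The only difference is the choice of region: you localize to the \emph{interior} half-disk $D_{R|a|}^+$ and work in the $A_0$-gauge with $Z_a^R$ (using the bound \eqref{eq:67_zar}), whereas the paper localizes to the \emph{exterior} $\Omega\setminus D_{R|a|}^+$, lands after blow-up in the annulus $D_{2R}^+\setminus D_R^+$ with $\tilde\varphi_a$ in the $A_p$-gauge, and invokes \eqref{eq:67}. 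Both routes are equally valid; the paper's exterior version has the incidental advantage that the intermediate estimate it produces (labelled \eqref{eq:scaling}) is reused later in the proof of Theorem~\ref{theo:exact_convergence}.
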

\begin{proof}
  Directly from  scaling  and Theorem \ref{stima_teo_inversione}, we obtain 
  that, for every $R>\tilde R$,
  \begin{equation} \label{eq:scaling}
 \left(\int_{\big(\frac{1}{|a|}\Omega\big)\setminus D_{R}^+}\bigg|(i\nabla+A_{p})
 \Big(\tilde \varphi_a(x) - e^{\frac i2(\theta_{p}-\theta_0^p)}
\tfrac{|a|^{j}}{\sqrt{H_a}}W_a\Big)\bigg|^2dx\right)^{1/2}=O(1),
\quad\text{as }a=|a|p\to0,
\end{equation}
from which it follows that
\begin{equation*}
 \tfrac{|a|^{j}}{\sqrt{H_a}} \left(\int_{D_{2R}^+\setminus D_{R}^+}\bigg|(i\nabla+A_{p})
 \Big(e^{\frac i2(\theta_{p}-\theta_0^p)}W_a\Big)\bigg|^2dx\right)^{1/2} 
 \leq O(1) + \left(\int_{D_{2R}^+\setminus D_{R}^+}\bigg|(i\nabla+A_{p})\tilde \varphi_a(x)\bigg|^2dx\right)^{1/2} 
\end{equation*}
as $a=|a|p\to0$. Via \eqref{eq:vkext_la} and \eqref{eq:67}, this reads $\tfrac{|a|^j}{\sqrt{H_a}}=O(1)$ 
as $|a|\to0^+$, thus concluding the proof.
\end{proof}

\section{Blow-up analysis} \label{sec:blowup}

\begin{theorem}  \label{theo:exact_convergence}
For $p\in {\mathbb S}^1_+$ and  $a = |a| p \in \Omega$, 
let $\varphi_N^a$ solve \eqref{eq:equation_a}. 
Let $\tilde{\varphi}_a$ be as in \eqref{def_blowuppate_normalizzate}, 
$\bar{K}$ be as in \eqref{eq:26}, 
$\beta$ be as in \eqref{eq:asyphi0} and $\Psi_p$ be the function defined in \eqref{eq:Psip}. 
Then
\begin{equation}\label{eq:limit1}
 \lim_{a=|a|p \to 0} \frac{|a|^j}{\sqrt{H_a}} = \frac{1}{|\beta|}
 \sqrt{\frac{\bar{K}}{\int_{\partial D_{\bar{K}}^+} |\Psi_p|^2 \,ds}}
\end{equation}
and 
\begin{equation}\label{eq:limit2}
  \tilde{\varphi}_a \to \frac{\beta}{|\beta|}\sqrt{\frac{\bar{K}}{\int_{\partial D_{\bar{K}}^+} |\Psi_p|^2\,ds}} \Psi_p, \quad \text{ as } a=|a|p \to 0,
\end{equation}
in $H^{1,p}(D_R^+, \C)$ for every $R > 1$,  almost everywhere in
$\R^2_+$, and in $C^2_{\rm loc}(\overline{\R^2_+}\setminus\{p\},\C)$.
\end{theorem}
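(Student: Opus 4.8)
The plan is to run a blow-up argument around the moving pole, in the spirit of \cite{abatangelo2015sharp}, using the auxiliary normalization $\tilde\varphi_a$ and the characterization of the limit profile in Proposition~\ref{prop_tildePsi}(ii). First I would fix an arbitrary sequence $a_n=|a_n|p\to0$ and exploit the uniform bound \eqref{eq:67}: since $\{\tilde\varphi_a\}$ is bounded in $H^{1,p}(D_R^+,\C)$ for every $R>1$, a diagonal procedure yields a subsequence along which $\tilde\varphi_{a_n}\rightharpoonup\Phi$ weakly in $H^{1,p}(D_R^+,\C)$ (hence strongly in $L^2(D_R^+,\C)$) for all $R>1$, for some $\Phi\in\bigcup_{r>1}H^{1,p}(D_r^+,\C)$. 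Rescaling the equation \eqref{eq:equation_a} via $A_a(|a|x)=|a|^{-1}A_p(x)$ (valid because $a=|a|p$) shows that $\tilde\varphi_a$ weakly solves $(i\nabla+A_p)^2\tilde\varphi_a=|a|^2\lambda_N^a\,q(|a|x)\,\tilde\varphi_a$ in $\tfrac{1}{|a|}\Omega$; since $\lambda_N^a\le\Lambda$ and $q$ is bounded, the right-hand side tends to $0$ uniformly on compact sets, so in the limit $(i\nabla+A_p)^2\Phi=0$ in $\R^2_+$ with $\Phi=0$ on $\partial\R^2_+$. A change of variables gives $H(\tilde\varphi_a,\bar K)=\tfrac{1}{\bar K}\int_{\partial D_{\bar K}^+}|\tilde\varphi_a|^2\,ds=1$ for all $a$, and the compactness of the trace $H^1(D^+_{\bar K+1})\hookrightarrow L^2(\partial D^+_{\bar K})$ lets this pass to the limit, so $\tfrac{1}{\bar K}\int_{\partial D_{\bar K}^+}|\Phi|^2\,ds=1$; in particular $\Phi\not\equiv0$.

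The core step is to identify $\Phi$ with a multiple of $\Psi_p$. By Theorem~\ref{conseguenza_stima_teo_inversione} the quotients $c_a:=|a|^j/\sqrt{H_a}$ are bounded, so along a further subsequence $c_{a_n}\to c\in[0,+\infty)$. I would then invoke the scaling estimate \eqref{eq:scaling}: on each fixed annulus $D_{R'}^+\setminus D_R^+$ (with $\tilde R<R<R'$ and $|a|$ small enough that this annulus lies inside $\tfrac{1}{|a|}\Omega$) it provides a bound, uniform in $|a|$, on $\|(i\nabla+A_p)(\tilde\varphi_a-e^{\frac i2(\theta_p-\theta_0^p)}c_aW_a)\|_{L^2}$. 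Since $W_a\to\beta\,e^{\frac i2\tilde\theta_0}\psi_j$ in $H^{1,0}(D_{R'}^+,\C)$ by \eqref{eq:vkext_la} and $c_{a_n}\to c$, weak lower semicontinuity on each annulus, then $R'\to+\infty$, together with the decomposition $e^{\frac i2(\theta_p-\theta_0^p+\tilde\theta_0)}\psi_j=\Psi_p-e^{\frac i2(\theta_p-\theta_0^p+\tilde\theta_0)}w_p$ and $\int_{\R^2_+}|(i\nabla+A_p)(e^{\frac i2(\theta_p-\theta_0^p+\tilde\theta_0)}w_p)|^2=\int_{\R^2_+}|\nabla w_p|^2<+\infty$, and adding the finite contribution of $D_R^+$ (where $\Phi,\Psi_p\in H^{1,p}(D_R^+)$), yields
\[
\int_{\R^2_+}\big|(i\nabla+A_p)\big(\Phi-c\beta\Psi_p\big)\big|^2\,dx<+\infty .
\]
Thus $g:=\Phi-c\beta\Psi_p$ is an $H^{1,p}_{\mathrm{loc}}$-weak solution of $(i\nabla+A_p)^2g=0$ in $\R^2_+$ with $g=0$ on $\partial\R^2_+$ and finite total magnetic energy; testing with $g$ and using the Hardy inequality \eqref{eq:hardy}, exactly as at the end of the proof of Proposition~\ref{prop_tildePsi}(ii), forces $g\equiv0$, i.e.\ $\Phi=c\beta\Psi_p$. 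Since $\Phi\not\equiv0$ we must have $c>0$, and from $\tfrac{1}{\bar K}\int_{\partial D_{\bar K}^+}|\Phi|^2=1$ we read off $c=c_0:=|\beta|^{-1}\big(\tfrac{\bar K}{\int_{\partial D_{\bar K}^+}|\Psi_p|^2\,ds}\big)^{1/2}$. As $c_a$ is bounded and every subsequential limit equals $c_0$, this gives \eqref{eq:limit1}, and $\Phi=c_0\beta\Psi_p=\tfrac{\beta}{|\beta|}\big(\tfrac{\bar K}{\int_{\partial D_{\bar K}^+}|\Psi_p|^2\,ds}\big)^{1/2}\Psi_p$.

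It remains to upgrade the topology and remove the subsequence. Since $\tilde\varphi_a-\Phi$ vanishes on $\partial\R^2_+$ and satisfies $(i\nabla+A_p)^2(\tilde\varphi_a-\Phi)=|a|^2\lambda_N^a q(|a|x)\tilde\varphi_a\to0$ in $L^2(D^+_{2R})$, a Caccioppoli-type estimate (testing against $\eta^2(\tilde\varphi_a-\Phi)$ with a cut-off $\eta\equiv1$ on $D_R^+$, $\mathrm{supp}\,\eta\subset D^+_{2R}$; the pole $p$ contributes no boundary term thanks to the $|x-p|^{1/2}$-regularity from \cite{FFT2011}), combined with the compact embedding $H^{1,p}(D^+_{2R})\hookrightarrow L^2(D^+_{2R})$ and the Hardy inequality, upgrades the convergence to strong in $H^{1,p}(D_R^+,\C)$ for every $R>1$. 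After the local gauge change $\tilde\varphi_a=e^{\frac i2\theta_p}u_a$, the function $u_a$ solves $-\Delta u_a=|a|^2\lambda_N^a q(|a|x)u_a$, so Schauder estimates (up to the flat boundary $\partial\R^2_+$) bound $u_a$ in $C^{2,\tau}_{\rm loc}(\overline{\R^2_+}\setminus\{p\})$, whence $\tilde\varphi_a\to\Phi$ in $C^2_{\rm loc}(\overline{\R^2_+}\setminus\{p\},\C)$, which in particular yields the a.e.\ convergence. Since $\Phi=c_0\beta\Psi_p$ is independent of the chosen subsequence, a Urysohn argument gives \eqref{eq:limit1}--\eqref{eq:limit2} along the whole family $a=|a|p\to0$. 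The main obstacle is the identification step: obtaining the finite-energy property of $\Phi-c\beta\Psi_p$ that pins $\Phi$ down via Proposition~\ref{prop_tildePsi}(ii). This is exactly where the quantitative machinery of \S\ref{sec:energyestimates} (invertibility of $dF(\lambda_N,\varphi_N^0)$, Theorem~\ref{stima_teo_inversione}, and the a priori bound $|a|^{2j}=O(H_a)$ of Theorem~\ref{conseguenza_stima_teo_inversione}) is indispensable, as it both rules out loss of energy at infinity in the blow-up and guarantees that $c_a$ has a finite positive limit.
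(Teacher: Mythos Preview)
Your proposal is correct and follows essentially the same blow-up strategy as the paper: compactness of $\tilde\varphi_a$ via \eqref{eq:67}, passage to the limit in the rescaled equation, nontriviality from the trace normalization, identification of the limit through the finite-energy criterion of Proposition~\ref{prop_tildePsi}(ii) combined with \eqref{eq:scaling} and Theorem~\ref{conseguenza_stima_teo_inversione}, and a Urysohn argument. The only tactical difference is the order of two sub-steps. The paper first upgrades to strong $H^{1,p}$ convergence (via $C^2_{\rm loc}$ elliptic estimates and convergence of norms obtained by testing the equation) and then passes to the limit in \eqref{eq:scaling}; you instead use weak lower semicontinuity of the magnetic Dirichlet energy on each annulus $D_{R'}^+\setminus D_R^+$ to get the finite-energy property and the identification $\Phi=c\beta\Psi_p$ first, and only afterwards establish strong convergence via a Caccioppoli inequality. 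Both orderings are valid; your route is arguably slightly cleaner at the identification step since weak lower semicontinuity is immediate, while the paper's route yields the $C^2_{\rm loc}$ convergence earlier as a by-product of the strong-convergence argument.
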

\begin{proof}
From Theorem \ref{conseguenza_stima_teo_inversione}, we know that
$\frac{|a|^j}{\sqrt{H_a}} = O(1)$ as $a=|a|p \to 0$.
Furthermore, we have that the family 
$\{ \tilde{\varphi}_a \, : \, a = |a| p, \, |a| < \frac{\bar{r}}{R} \}$ 
is bounded in $H^{1,p}(D_R^+, \C)$ for all $R \geq \bar{K}$, see \eqref{eq:67}. 
Then, by a diagonal process, for every sequence $a_n = |a_n| p$ with $|a_n| \to 0$, 
there exist $c \in [0, +\infty)$, 
$\tilde{\Phi} \in \bigcup_{R>1} H^{1 ,{p}}(D_R^+,\C)$, 
and a subsequence $a_{n_\ell}$ such that
\[
\lim_{\ell \to + \infty} \frac{|a_{n_\ell}|^j}{\sqrt{H_{a_{n_\ell}}}} = c
\]
and
\[
\tilde{\varphi}_{a_{n_\ell}} \rightharpoonup \tilde{\Phi} \quad \text{ weakly in } H^{1,p}(D_R^+, \C) \, \text{for every } R > 1 \, \text{ and almost everywhere}.
\]
By \eqref{def_blowuppate_normalizzate} and compactness of the trace
embedding, we have that 
\begin{align} \label{eq:boundary1}
\frac{1}{\bar{K}} \int_{\partial D_{\bar{K}}^+} |\tilde{\Phi}|^2 \, ds = 1;
\end{align}
in particular $\tilde{\Phi} \not\equiv 0$. Passing to the weak limit
in the equation satisfied by  $\tilde{\varphi}_a$, i.e. in equation
\begin{equation}\label{eq:10}
(i \nabla + A_p)^2 \tilde{\varphi}_a = \lambda_N^a |a|^2 q(|a|x)
\tilde{\varphi}_a, \quad \text{in }
 \tfrac{1}{|a|} \Omega = \{ x \in \mathbb{R}^2 \, : \, |a| x \in \Omega \},
\end{equation}
we obtain that $\tilde{\Phi}$ weakly solves
\begin{equation} \label{eq:tildePhi1}
(i \nabla + A_p)^2 \tilde{\Phi} = 0, \quad \text{ in } \mathbb{R}^2_+.
\end{equation}
By continuity of the trace operator $H^{1,p}(D_R^+, \C)\to
L^2(\{0\}\times(-R,R),\C)$ and vanishing of $\tilde{\varphi}_{a_{n_\ell}}$
on $\{0\}\times(-R,R)$ for large $\ell$, we also have that
\begin{align} \label{eq:tildePhi2} 
\tilde{\Phi} = 0, \quad \text{ on } \partial \mathbb{R}^2_+.
\end{align}
By elliptic estimates, we can prove that $\tilde{\varphi}_{a_{n_\ell}}
\to \tilde{\Phi}$ in $C^2_{\rm loc}(\overline{\R^2_+}\setminus\{p\},\C)$. Therefore, for every $R>1$,
$\int_{\partial D_R^+} |\tilde{\varphi}_{a_{n_\ell}}|^2 \, ds \to
\int_{\partial D_R^+} |\tilde{\Phi}|^2 \, ds$ as $\ell \to + \infty$ and, passing to the limit in
\eqref{eq:10} tested by $\tilde{\varphi}_{a_{n_\ell}}$, we
obtain that 
\[
\int_{D_R^+} |(i\nabla + A_p)\tilde{\varphi}_{a_{n_\ell}}|^2 \, dx  \to \int_{D_R^+} |(i\nabla + A_p) \tilde{\Phi}|^2 \, dx, \quad \text{ as } \ell \to + \infty.
\]
Therefore, in view of the Poincaré inequality \eqref{eq:Poincare},
we deduce the convergence of norms
$\| \tilde{\varphi}_{a_{n_\ell}} \|_{H^{1,p}(D_R^+,\C)} \to \|
\tilde{\Phi}  \|_{H^{1,p}(D_R^+,\C)}$ as $\ell \to + \infty$
and then conclude that the convergence $\tilde{\varphi}_{a_{n_\ell}}
\to \tilde{\Phi}$ is actually strong in $H^{1,p}(D_R^+,\C)$ for every $R > 1$.

Therefore we can pass to the limit along $a_{n_\ell}$ in
\eqref{eq:scaling} and, recalling \eqref{eq:vkext_la}, we obtain that 
\[
\int_{\mathbb{R}^2_+ \setminus D_R^+} \left| (i \nabla + A_p) \left( \tilde{\Phi} - e^{\frac{i}{2}(\theta_p - \theta_0^p + \tilde{\theta}_0)} c \beta \psi_j \right) \right|^2 \, dx < + \infty,
\]
for every $R >\tilde R$. 

This implies that $c > 0$;  indeed,
otherwise, $c = 0$ would imply that $\int_{\mathbb{R}^2_+} |(i \nabla
+ A_p) \tilde{\Phi}|^2 \, dx < + \infty$, 
which, in view of \eqref{eq:tildePhi1}-\eqref{eq:tildePhi2} and
\eqref{eq:hardy}, would yield  $\tilde{\Phi} \equiv 0$, thus contradicting \eqref{eq:boundary1}. Therefore, from \eqref{eq:tildePhi1}, \eqref{eq:tildePhi2} and Proposition \ref{prop_tildePsi} we have necessarily that
\begin{equation} \label{eq:tildePhi}
\tilde{\Phi} = c \beta \Psi_p.
\end{equation}
From \eqref{eq:tildePhi}, \eqref{eq:boundary1} and the fact that $c > 0$, we have that
\[
c= \frac{1}{|\beta|} \sqrt{\frac{\bar{K}}{\int_{\partial D_{\bar{K}}^+} |\Psi_p|^2 \, ds}},
\]
so that the convergences \eqref{eq:limit1}--\eqref{eq:limit2} hold
along the subsequence $\{a_{n_\ell}\}_\ell$.  Since the limits in
\eqref{eq:limit1}--\eqref{eq:limit2} depend neither on the sequence
$\{a_n\}_n$ nor the subsequence $\{a_{n_\ell}\}_\ell$, we conclude that
the convergences holds for $|a| \to 0^+$.
\end{proof}

\begin{proof}[Proof of Theorem \ref{thm:blow_up_varphiN}]
It follows directly from \eqref{eq:limit1}, \eqref{eq:limit2}.
\end{proof}

From Theorem \ref{theo:exact_convergence} it follows that  the
blow-up family of functions $Z_a^R$ introduced in \eqref{eq:zar}
converges to a multiple of the unique solution $z_R$ to
\begin{equation} \label{eq:zR}
\begin{cases}
 (i \nabla + A_0)^2 z_R = 0, & \text{in } D_R^+, \\
z_R = e^{\frac{i}{2}(\theta_0^p - \theta_p)} \Psi_p, & \text{on } \partial D_R^+.
\end{cases}
\end{equation}

\begin{lemma} \label{l:convzar}
Under the same assumptions as in Theorem \ref{theo:exact_convergence},
let $Z_a^R$ be as in \eqref{eq:zar}. Then, for all $R > \tilde R$,
\begin{equation*}
Z_a^R \to \frac{\beta}{|\beta|} \sqrt{\frac{\bar{K}}{\int_{\partial D_{\bar{K}}^+} |\Psi_p|^2 \, ds}} z_R, \quad \text{ in } H^{1,0}(D_R^+, \C),
\end{equation*}
as $a=|a|p \to 0$.
\end{lemma}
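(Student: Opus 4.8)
The plan is to recognize $Z_a^R$ as the magnetic harmonic extension to $D_R^+$ of a rescaled, gauged trace of $\varphi_N^a$, to push the convergence of Theorem~\ref{theo:exact_convergence} down to $\partial D_R^+$, and then to invoke stability of the Dirichlet problem. Throughout I would set $c:=\frac{1}{|\beta|}\Big(\frac{\bar K}{\int_{\partial D_{\bar K}^+}|\Psi_p|^2\,ds}\Big)^{1/2}$, so that Theorem~\ref{theo:exact_convergence} reads $\tilde\varphi_a\to c\beta\,\Psi_p$ in $H^{1,p}(D_{R'}^+,\C)$ for every $R'>1$, and the assertion becomes $Z_a^R\to c\beta\, z_R$ in $H^{1,0}(D_R^+,\C)$.

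First I would establish the rescaling identity. Since $a=|a|p$, writing $|a|x=a+|a|(x-p)$ and using that $\theta_0^a$ is $0$-homogeneous and depends only on $\arg a=\arg p$, the definitions~\eqref{eq:angles} give $\theta_0^a(|a|x)=\theta_0^p(x)$ and $\theta_a(|a|x)=\theta_p(x)$; combined with $A_0(|a|x)=|a|^{-1}A_0(x)$ and the definition~\eqref{eq:zar} of $Z_a^R$, the Dirichlet problem solved by $v_{N,R,a}^{int}$ on $D_{R|a|}^+$ rescales to the statement that $Z_a^R$ is the unique solution of $(i\nabla+A_0)^2 Z_a^R=0$ in $D_R^+$ with boundary datum $e^{\frac i2(\theta_0^p-\theta_p)}\tilde\varphi_a$ on $\partial D_R^+$. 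On the other side, by~\eqref{eq:Psip} one has $e^{\frac i2(\theta_0^p-\theta_p)}\Psi_p=e^{\frac i2\tilde\theta_0}(w_p+\psi_j)$, so that $z_R$ of~\eqref{eq:zR} solves the analogous problem with datum $e^{\frac i2(\theta_0^p-\theta_p)}\Psi_p$, and, by $\C$-linearity of $(i\nabla+A_0)^2$, $c\beta\, z_R$ solves it with datum $c\beta\, e^{\frac i2(\theta_0^p-\theta_p)}\Psi_p$. Hence $Z_a^R-c\beta\, z_R$ is $(i\nabla+A_0)$-harmonic in $D_R^+$ with trace $e^{\frac i2(\theta_0^p-\theta_p)}(\tilde\varphi_a-c\beta\,\Psi_p)$ on $\partial D_R^+$.

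Next I would transfer the convergence to the boundary. Because $R>\tilde R>2$, the set $\partial D_R^+$ stays away from both the pole $p$ of $A_p$ and the segment $\Gamma_p$, so $e^{\frac i2(\theta_0^p-\theta_p)}$ is smooth on the arc of $\partial D_R^+$; and since $\tilde\varphi_a$ and $\Psi_p$ vanish on $\partial\R^2_+$, the boundary datum above is supported on that arc. This is the one point requiring mild care: at $0\in\partial D_R^+$ the phase $e^{\frac i2(\theta_0^p-\theta_p)}$ is merely bounded (not continuous), but it multiplies a function vanishing there, so the product is still a genuine element of $H^{1/2}(\partial D_R^+,\C)$. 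Near $\partial D_R^+$ the weighted space $H^{1,p}$ reduces to $H^1$, so the $H^{1,p}(D_{R'}^+)$-convergence of Theorem~\ref{theo:exact_convergence} yields $H^1$-convergence in a neighborhood of $\partial D_R^+$, hence $H^{1/2}(\partial D_R^+,\C)$-convergence of the traces; multiplying by the smooth phase, the trace of $Z_a^R-c\beta\, z_R$ tends to $0$ in $H^{1/2}(\partial D_R^+,\C)$.

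Finally I would conclude by stability of the magnetic Dirichlet problem, exactly as in the construction of $v_{N,R,a}^{int}$ leading to~\eqref{eq:67_zar}: testing the equation for $Z_a^R-c\beta\, z_R$ against itself minus a bounded $H^{1,0}(D_R^+,\C)$-lift of its boundary datum (which can be chosen supported away from $0$), one gets that $\|(i\nabla+A_0)(Z_a^R-c\beta\, z_R)\|_{L^2(D_R^+)}$ is bounded by a constant times the $H^{1/2}(\partial D_R^+)$-norm of that datum; since $Z_a^R-c\beta\, z_R$ vanishes on $\partial\R^2_+\cap\partial D_R^+$, the Hardy inequality~\eqref{eq:hardy_R2} and the Poincaré inequality~\eqref{eq:Poincare} make $\|(i\nabla+A_0)\cdot\|_{L^2(D_R^+)}$ equivalent to the $H^{1,0}(D_R^+,\C)$-norm on such functions, and the conclusion follows. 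The main obstacle is not analytic depth but the bookkeeping of the first step — verifying that the rescaled angles become precisely $\theta_0^p$ and $\theta_p$ and that the branch point of $e^{\frac i2(\theta_0^p-\theta_p)}$ at $0$ and its jump across $\Gamma_p$ do not disturb the trace on $\partial D_R^+$; once this is in place, the remainder is the Dirichlet principle applied verbatim.
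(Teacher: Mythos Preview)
Your proposal is correct and follows exactly the route the paper indicates: pass the boundary datum to the limit using Theorem~\ref{theo:exact_convergence} and then apply the Dirichlet principle to the $(i\nabla+A_0)$-harmonic difference $Z_a^R-c\beta z_R$; the paper simply cites \cite[Lemma~8.3]{abatangelo2015sharp} for the details you have spelled out. One cosmetic remark: to control $\|u/|x|\|_{L^2(D_R^+)}$ in the final step, invoke \eqref{eq:hardy} after extending by zero across $\{x_1=0\}$ to the full disk $D_R$, rather than \eqref{eq:hardy_R2}, which is stated on all of $\R^2$.
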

\begin{proof}
Once the convergence \eqref{eq:limit2} is established, it follows from
a standard  Dirichlet principle, see \cite[Lemma 8.3]{abatangelo2015sharp} for
details.
\end{proof}

\section{Sharp asymptotics for convergence of eigenvalues: $f_R(a)$} \label{sec:end}

In view of Lemmas \ref{l:stima_Lambda0_sopra} and
\ref{l:stima_Lambda0_sotto} and of the asymptotics of $H_a$ given by
\eqref{eq:limit1}, to compute the limit of $\frac{\lambda_N^a -
  \lambda_N}{|a|^{2j}}$ it remains to compute the limit
of $f_R(a)$ as $a=|a|p \to 0$ and $R \to + \infty$.

\begin{lemma} \label{l:limitfRa}
For all $R > \tilde{R}$ (where $\tilde{R}$ is given in Lemma \ref{l:stima_Lambda0_sopra}) and $a = |a| p \in \Omega$ with $|a| < \frac{R_0}{R}$, let $f_R(a)$ be as in Lemma \ref{l:stima_Lambda0_sopra}. Then,
\begin{equation*} 
\lim_{|a|\to 0^+} f_R(a) = -i \frac{\bar{K}}{\int_{\partial D_{\bar{K}}^+} |\Psi_p|^2 \, ds} \kappa_R,
\end{equation*}
where
\begin{equation} \label{eq:kappaR}
\kappa_R = \int_{\partial D_R^+} \left( (i\nabla + A_0) z_R \cdot \nu \, \overline{z_R} - (i \nabla + A_p) \Psi_p \cdot \nu \, \overline{\Psi_p} \right) \, ds.
\end{equation}
Furthermore,
$\lim_{R \to + \infty} \kappa_R = - 2 i \mathfrak{m}_p$, where
$\mathfrak{m}_p$ is defined in \eqref{eq:m}.  
\end{lemma}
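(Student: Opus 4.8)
The plan is to establish the two assertions in turn. For the first, fix $R>\tilde R$ and abbreviate $\mu:=\big(\bar{K}/\!\int_{\partial D_{\bar{K}}^+}|\Psi_p|^2\,ds\big)^{1/2}$. By Theorem \ref{theo:exact_convergence} and Lemma \ref{l:convzar}, as $a=|a|p\to0$ one has $\tilde\varphi_a\to\frac{\beta}{|\beta|}\mu\,\Psi_p$ in $H^{1,p}(D_R^+,\C)$ and $Z_a^R\to\frac{\beta}{|\beta|}\mu\,z_R$ in $H^{1,0}(D_R^+,\C)$; since $|\beta/|\beta||=1$, inserting these strong convergences into the formula for $f_R(a)$ from Lemma \ref{l:stima_Lambda0_sopra} gives
\[
\lim_{|a|\to0^+}f_R(a)=\mu^2\Big(\int_{D_R^+}|(i\nabla+A_0)z_R|^2\,dx-\int_{D_R^+}|(i\nabla+A_p)\Psi_p|^2\,dx\Big).
\]
I would then identify the right-hand side via the Green-type identity $\int_{D_R^+}|(i\nabla+A_b)u|^2\,dx=-i\int_{\partial D_R^+}\overline u\,(i\nabla+A_b)u\cdot\nu\,ds$, valid whenever $(i\nabla+A_b)^2u=0$ weakly in $D_R^+$, $u=0$ on $\partial\R^2_+$, and $u$ has finite magnetic energy near its pole $b$, so that the flux through a circle of radius $\varepsilon$ around $b$ is $O(\varepsilon)\to0$ (using $u=O(|x-b|^{1/2})$ and $(i\nabla+A_b)u=O(|x-b|^{-1/2})$). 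Applying this to $z_R$ in $D_R^+$ (pole $0$; $z_R$ solves \eqref{eq:zR} and is regular up to $\partial D_R^+$ by elliptic theory) and to $\Psi_p$ in $D_R^+$ (pole $p$; using \eqref{eq:Psip1}--\eqref{eq:Psip2} and Remark \ref{rem:formulazione}) yields exactly $\int_{D_R^+}|(i\nabla+A_0)z_R|^2\,dx-\int_{D_R^+}|(i\nabla+A_p)\Psi_p|^2\,dx=-i\kappa_R$, with $\kappa_R$ as in \eqref{eq:kappaR}, hence $\lim_{|a|\to0^+}f_R(a)=-i\mu^2\kappa_R$, which is the claimed value.

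For the convergence $\kappa_R\to-2i\mathfrak m_p$ the idea is to gauge away the magnetic phases, proceeding as in the proof of Lemma \ref{l:lemmakappaR}. Set $\tilde z_R:=e^{-\frac i2\tilde\theta_0}z_R$. Since $\tfrac12\nabla\tilde\theta_0=A_0$ on $D_R^+$ and the cut of $\tilde\theta_0$ lies outside $D_R^+$, one has $(i\nabla+A_0)(e^{\frac i2\tilde\theta_0}g)=i\,e^{\frac i2\tilde\theta_0}\nabla g$; together with \eqref{eq:zR} and the identity $e^{\frac i2(\theta_0^p-\theta_p)}\Psi_p=e^{\frac i2\tilde\theta_0}(w_p+\psi_j)$ coming from \eqref{eq:Psip}, this shows that $\tilde z_R$ is harmonic in $D_R^+$, vanishes on $\partial\R^2_+$, and equals $w_p+\psi_j$ on the arc $\partial D_R\cap\R^2_+$; uniqueness for the Dirichlet problem (the data being real) forces $\tilde z_R$ to be real-valued. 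The same gauge computation, applied also to $\Psi_p=e^{\frac i2(\theta_p-\theta_0^p+\tilde\theta_0)}(w_p+\psi_j)$, gives on $\partial D_R^+$ the relations $(i\nabla+A_0)z_R\cdot\nu\,\overline{z_R}=i\,\partial_\nu\tilde z_R\,\tilde z_R$ and $(i\nabla+A_p)\Psi_p\cdot\nu\,\overline{\Psi_p}=i\,\partial_\nu(w_p+\psi_j)(w_p+\psi_j)$; since $\tilde z_R=w_p+\psi_j$ on $\partial D_R^+$ and both vanish on the diameter, \eqref{eq:kappaR} becomes
\[
\kappa_R=i\int_{\partial D_R\cap\R^2_+}(w_p+\psi_j)\,\partial_\nu\big(\tilde z_R-w_p-\psi_j\big)\,ds.
\]

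It then remains a Fourier computation in the orthogonal basis $\{\sin(k(\tfrac\pi2-t))\}_{k\ge1}$ of $L^2(-\tfrac\pi2,\tfrac\pi2)$. By \eqref{eq:psi_j}, $\psi_j$ corresponds to $r^j\sin(j(\tfrac\pi2-t))$; being harmonic and $O(|x|^{-1})$ outside $D_1^+$ (see \eqref{eq:Psip4}), $w_p=\sum_{k\ge1}d_k r^{-k}\sin(k(\tfrac\pi2-t))$ for $r>1$ with $\sum_k k d_k^2<\infty$ since $w_p\in\mathcal H_p$; and $\tilde z_R-\psi_j$, being the harmonic function on $D_R^+$ that vanishes on $\partial\R^2_+$ and coincides with $w_p$ on $\partial D_R$, equals $\sum_{k\ge1}d_k R^{-2k}r^k\sin(k(\tfrac\pi2-t))$. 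Substituting these expansions on $\{r=R\}$ into the above integral and using orthogonality, a direct computation gives $\kappa_R=i\pi\big(j d_j+j d_j^2 R^{-2j}+\sum_{k\ne j}k d_k^2 R^{-2k}\big)$, whose last two terms are $O(R^{-2})$, so that $\kappa_R\to i\pi j d_j$ as $R\to+\infty$. Finally $\omega_p(1)=\tfrac\pi2 d_j$ by \eqref{eq:omega_pj_def} and orthogonality, while $\mathfrak m_p=-j\,\omega_p(1)$ by Proposition \ref{prop:relationmp}; hence $d_j=-\tfrac{2}{\pi j}\mathfrak m_p$ and $\lim_{R\to+\infty}\kappa_R=-2i\mathfrak m_p$, as claimed.

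The routine but delicate point is the bookkeeping of the gauge phases $e^{\frac i2(\theta_p-\theta_0^p+\tilde\theta_0)}$ near $\partial D_R^+$ (harmless because $R>1$ keeps the arc away from the crack $\Gamma_p$), together with the rigorous justification of the Green identity at the magnetic poles and of the termwise operations on the Fourier series on $\partial D_R$; all of this can be carried out exactly as in the proof of Lemma \ref{l:lemmakappaR}.
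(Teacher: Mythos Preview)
Your proof is correct. The first assertion (computing $\lim_{|a|\to0^+}f_R(a)$) follows the paper's argument verbatim: the same strong convergences from Theorem~\ref{theo:exact_convergence} and Lemma~\ref{l:convzar}, followed by the same integration by parts to pass from bulk energies to the boundary integral defining $\kappa_R$.

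For the second assertion, however, your route is genuinely different from the paper's. The paper proceeds in two steps: in Step~1 it replaces $\overline{z_R}$ and $\overline{\Psi_p}$ on $\partial D_R^+$ by (phases times) $\psi_j$, at the cost of remainder terms $I_1(R)$, $I_2(R)$ which are shown to vanish as $R\to\infty$ via cut-off test functions and the decay \eqref{eq:Psip3}--\eqref{eq:Psip4}; in Step~2, with only $\psi_j$ left as a test function, a single Fourier mode suffices, and the argument reduces to an ODE for $\zeta_R(r)$ analogous to the proof of Proposition~\ref{prop:relationmp}. You instead gauge both $z_R$ and $\Psi_p$ to real harmonic objects $\tilde z_R$ and $w_p+\psi_j$, obtain the clean identity $\kappa_R=i\int_{\partial D_R\cap\R^2_+}(w_p+\psi_j)\,\partial_\nu(\tilde z_R-w_p-\psi_j)\,ds$, and then expand $w_p$ and $\tilde z_R$ in the full sine basis $\{\sin(k(\tfrac\pi2-t))\}_{k\ge1}$ to get an \emph{exact} expression $\kappa_R=i\pi\big(jd_j+\sum_{k\ge1}k d_k^2 R^{-2k}\big)$, from which the limit is immediate. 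Your approach trades the paper's cut-off estimates for a Fourier computation and the summability $\sum_k k d_k^2<\infty$ (a consequence of $w_p\in\mathcal H_p$); it is more explicit (it gives the full $R$-dependence of $\kappa_R$, not just the limit) and avoids the somewhat technical handling of $I_1,I_2$, at the modest price of justifying the termwise differentiation of the series on $\partial D_R$.
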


\begin{proof}
First, we observe that, by Theorem \ref{theo:exact_convergence}, Lemma \ref{l:convzar}, and the equations of $z_R$ \eqref{eq:zR} and $\Psi_p$ \eqref{eq:Psip2},
\begin{align*}
& \lim_{|a| \to 0^+} f_R(a) = \lim_{|a| \to 0^+} \left( \int_{D_R^+} | (i \nabla + A_0) Z_a^R |^2 \, dx - \int_{D_R^+} | (i \nabla + A_p) \tilde{\varphi}_a |^2 \, dx \right) + o(1) \\
& = \frac{\bar{K}}{\int_{\partial D_{\bar{K}}^+} |\Psi_p|^2 \, ds} \left( \int_{D_R^+} | (i \nabla + A_0) z_R|^2 \, dx - \int_{D_R^+} | (i\nabla + A_p) \Psi_p|^2 \, dx \right) = - i \frac{\bar{K}}{\int_{\partial D_{\bar{K}}^+} |\Psi_p|^2 \, ds} \kappa_R,
\end{align*}
with $\kappa_R$ from \eqref{eq:kappaR}.
We divide the computation of the limit $\lim_{R \to + \infty} \kappa_R$ in two steps.

\textbf{Step 1.} We claim that
\begin{equation} \label{eq:kappaR2}
\kappa_R = \int_{\partial D_R^+} \left( e^{\frac{i}{2}(\theta_p - \theta_0^p)} (i \nabla + A_0) z_R  - (i \nabla + A_p) \Psi_p \right) \cdot \nu \, e^{- \frac{i}{2}(\theta_p - \theta_0^p + \tilde{\theta}_0)} \psi_j \, ds + o(1),
\end{equation}
as $R \to + \infty$. Indeed, we observe that $\kappa_R$ can be written
as
\begin{align*}
\kappa_R= \int_{\partial D_R^+} \left( e^{\frac{i}{2}(\theta_p - \theta_0^p)} (i \nabla + A_0) z_R  - (i \nabla + A_p) \Psi_p \right) \cdot \nu \, e^{- \frac{i}{2}(\theta_p - \theta_0^p + \tilde{\theta}_0)} \psi_j \, ds + I_1(R) + I_2(R),
\end{align*}
where
\begin{align*}
& I_1(R) = \int_{\partial D_R^+} (i \nabla + A_0) \left( z_R - e^{\frac{i}{2} \tilde{\theta}_0} \psi_j \right) \cdot \nu \, \left( \overline{ e^{\frac{i}{2} (\theta_0^p - \theta_p)} \Psi_p} - e^{- \frac{i}{2} \tilde{\theta}_0} \psi_j \right) \, ds, \\
& I_2(R) = - \int_{\partial D_R^+} (i \nabla + A_p) \left( \Psi_p - e^{\frac{i}{2} (\tilde{\theta}_0 + \theta_p - \theta_0^p)} \psi_j \right) \cdot \nu \, \left( \overline{\Psi_p} - e^{- \frac{i}{2}(\theta_p - \theta_0^p + \tilde{\theta}_0)} \psi_j \right) \, ds .
\end{align*}
Let $\eta_R$ be a smooth cut-off function  satisfying
\begin{equation*}
  \eta_R\equiv 0\text{ in }D_{R/2},\quad 
  \eta_R\equiv 1 \text{ on }\R^2\setminus D_{R},\quad 0\leq \eta_R\leq1\quad\text{and}\quad
  |\nabla\eta_R|\leq4/R\text{ in }\R^2.
\end{equation*}
 By testing the equation
\[
(i \nabla + A_p)^2 \left( \Psi_p - e^{\frac{i}{2} (\theta_p - \theta_0^p + \tilde{\theta}_0 ) } \psi_j \right) = 0,
\]
which is satisfied in $\mathbb{R}^2_+ \setminus D_{R}^+$, on $(\Psi_p
- e^{\frac{i}{2} (\theta_p - \theta_0^p + \tilde{\theta}_0 ) } \psi_j)
(1 - \eta_{2R})^2$, we obtain that
\begin{align*}
I_2(R) & = i \int_{\mathbb{R}^2_+ \setminus D_R^+} | (i \nabla + A_p) \left( \Psi_p - e^{\frac{i}{2} (\theta_p - \theta_0^p + \tilde{\theta}_0 ) } \psi_j \right) |^2 (1 - \eta_{2R})^2 \, dx \\
& + 2 \int_{\mathbb{R}^2_+ \setminus D_R^+} (1 - \eta_{2R})
 \left( \overline{\Psi_p} - e^{-\frac{i}{2} (\theta_p - \theta_0^p + \tilde{\theta}_0 ) } \psi_j \right) 
 (i \nabla + A_p) \left( \Psi_p - e^{\frac{i}{2} (\theta_p - \theta_0^p + \tilde{\theta}_0 ) } \psi_j \right) \cdot \nabla \eta_{2R} \, dx.
\end{align*}
Hence,
\begin{multline*}
  |I_2(R)|\leq 2 \int_{\mathbb{R}^2_+ \setminus D_R^+} \left|(i \nabla +
  A_p) \left( \Psi_p - e^{\frac{i}{2} (\theta_p - \theta_0^p +
      \tilde{\theta}_0 ) } \psi_j \right)\right|^2 \, dx\\ + \frac{4}{R^2}
  \int_{D_{2R}^+ \setminus D_R^+} \left| \Psi_p - e^{\frac{i}{2}
      (\theta_p - \theta_0^p + \tilde{\theta}_0 ) } \psi_j \right|^2
  \, dx \to 0,\quad\text{as $R \to + \infty$},
\end{multline*}
thanks to \eqref{eq:Psip3}, \eqref{eq:Psip4}. On the other hand, by testing the equation
$(i \nabla + A_0)^2 \big( z_R - e^{\frac{i}{2} \tilde{\theta}_0}  \psi_j  \big) = 0$
in $D_R^+$ on $\eta_R \big( e^{\frac{i}{2} (\theta_0^p - \theta_p)}
  \Psi_p - e^{\frac{i}{2}\tilde{\theta}_0} \psi_j \big)$, 
the Dirichlet principle yields that
\begin{align*}
|I_1(R)| & = \Big| i \int_{D_R^+} (i \nabla + A_0) \left( z_R - e^{\frac{i}{2} \tilde{\theta}_0} \psi_j \right) \cdot \overline{(i \nabla + A_0) \left( \eta_R \left( e^{\frac{i}{2} (\theta_0^p - \theta_p)} \Psi_p - e^{\frac{i}{2} \tilde{\theta}_0} \psi_j \right) \right)} \, dx \Big| \\
& \leq  \int_{D_R^+} \left| (i \nabla + A_0) \left( \eta_R \left( e^{\frac{i}{2}(\theta_0^p - \theta_p)} \Psi_p - e^{\frac{i}{2} \tilde{\theta}_0} \psi_j \right) \right) \right|^2 \, dx  \\
& \leq 2 \int_{D_R^+ \setminus D_{R/2}^+} \left| (i\nabla + A_0)
  \left(e^{\frac{i}{2}(\theta_0^p - \theta_p)} \Psi_p - e^{\frac{i}{2}
      \tilde{\theta}_0} \psi_j \right) \right|^2 \, dx \\
&\qquad+ \frac{32}{R^2} \int_{D_R^+ \setminus D_{R/2}^+} \left| e^{\frac{i}{2}(\theta_0^p - \theta_p)} \Psi_p - e^{\frac{i}{2} \tilde{\theta}_0} \psi_j \right|^2 \, dx.
\end{align*}
Hence $\lim_{R \to + \infty}I_1(R)=0$ thanks to \eqref{eq:Psip3} and
\eqref{eq:Psip4}. The proof of \eqref{eq:kappaR2} is thereby complete.

\textbf{Step 2.} We now compute $\lim_{R \to + \infty} \kappa_R$.
First, we define
\[
\zeta_R(r) = \int_{-\frac{\pi}{2}}^{\frac{\pi}{2}} e^{- \frac{i}{2}
  \tilde{\theta}_0(r \cos t, r \sin t)} z_R(r \cos t, r \sin t)
 \sin \left( j \left( \tfrac{\pi}{2} - t \right) \right) \, dt.
\]
Thanks to the equation satisfied by $z_R$ \eqref{eq:zR}, we have that
\[
\big(r^{1+2j} \big(r^{-j} \zeta_R(r) \big)' \big)' = 0, \quad \text{in } (0, R].
\]
Therefore, by integrating over $(r,R)$, we obtain, for some $B\in\C$,
\begin{align*}
\zeta_R(r) = \frac{\zeta_R(R)}{R^j} r^j - \frac{B}{R^{2j}} r^j + B r^{-j}, \quad \text{ in } (0, R].
\end{align*}
Next, we note that the function $z_R^0:= e^{-\tfrac{i}{2}\tilde\theta_0}z_R$ 
is a solution to $-\Delta z_R^0=0$ in $D_R^+$ and $z_R^0=0$ on $\partial \R^2_+ \cap D_R$, 
so $z_R^0=O(|x|)$ as $|x|\to0$ (see e.g. \cite{HW}). This implies that $B = 0$ and 
\[
 \zeta_R(r) = \frac{\zeta_R(R)}{R^j} r^j\quad\text{and}\quad \zeta_R'(r) =
 \frac{j \zeta_R(R)}{R^j} r^{j-1},
 \quad \text{ in } (0, R].
\]
On the other hand, we can compute
\begin{align*}
\zeta_R'(R) & =  \frac{1}{R^{j+1}} \int_{\partial D_R^+} \nabla \left( e^{- \frac{i}{2} \tilde{\theta}_0} z_R \right) \cdot \nu \, \psi_j \, ds = - \frac{i}{R^{j+1}} \int_{\partial D_R^+} (i \nabla + A_0) z_R \cdot \nu \, e^{- \frac{i}{2} \tilde{\theta}_0} \psi_j \, ds.
\end{align*}
Hence, by combining the two previous equations, we have that
\begin{equation} \label{eq:123} 
\int_{\partial D_R^+} (i \nabla + A_0) z_R \cdot \nu \, e^{- \frac{i}{2} \tilde{\theta}_0} \psi_j \, ds = i j R^j \zeta_R(R). 
\end{equation}
To compute explicitly $\zeta_R(R)$, we can use the boundary conditions
of $z_R$ on $\partial D_R^+$,  Proposition \ref{prop:relationmp} and
\eqref{eq:psi_j} to obtain 
\begin{align} \nonumber
  \zeta_R(R) & = \int_{-\frac{\pi}{2}}^{\frac{\pi}{2}} e^{\frac{i}{2} (\theta_0^p - \theta_p - \tilde{\theta}_0)(R \cos t, R \sin t)} \Psi_p (R \cos t, R \sin t) \, \sin \left( j \left( \tfrac{\pi}{2} - t \right) \right) \, dt \\
  & \label{eq:124}= \int_{- \frac{\pi}{2}}^{\frac{\pi}{2}} (w_p +
  \psi_j)(R \cos t, R \sin t) \, \sin \left( j \left( \tfrac{\pi}{2} -
      t \right) \right) \, dt = - \frac{\mathfrak{m}_p}{j R^j} + R^j
  \frac{\pi}{2}.
\end{align}
By combining \eqref{eq:123} and \eqref{eq:124}, we get
\begin{align} \label{eq:part1}
\int_{\partial D_R^+} (i \nabla + A_0) z_R \cdot \nu \, e^{- \frac{i}{2} \tilde{\theta}_0} \psi_j \, ds = - i \mathfrak{m}_p + i j R^{2j} \frac{\pi}{2}.
\end{align}
Next, in view of \eqref{eq:Psip} we rewrite
\[
\int_{\partial D_R^+} (i \nabla + A_p) \Psi_p \cdot \nu \, e^{ \frac{i}{2} (\theta_0^p - \theta_p -\tilde{\theta}_0)} \psi_j \, ds = i \int_{\partial D_R^+} \nabla (w_p + \psi_j) \cdot \nu \, \psi_j \, ds.
\]
By using Proposition \ref{prop:relationmp} and \eqref{eq:psi_j}, we immediately obtain that
\begin{align} \label{eq:part2}
i \int_{\partial D_R^+} \nabla (w_p + \psi_j) \cdot \nu \, \psi_j \, ds = i \mathfrak{m}_p + i j R^{2j} \frac{\pi}{2}.
\end{align}
Finally, by combining \eqref{eq:kappaR2}, \eqref{eq:part1} and
\eqref{eq:part2} we obtain that
$\lim_{R\to+\infty}\kappa_R=-2i{\mathfrak m}_p$, thus concluding the proof.
\end{proof}

\begin{proof}[Proof of Theorems \ref{t:main_straight1} and \ref{t:main_straight2}]
From Lemmas \ref{l:stima_Lambda0_sotto}, \ref{l:stima_Lambda0_sopra},
Theorem \ref{theo:exact_convergence} and Lemma \ref{l:limitfRa}, it
follows that, for all $R > \tilde{R}$,
\begin{align*}
i |\beta|^2 \tilde{\kappa}_R + o(1) & \leq \frac{\lambda_N - \lambda_N^a}{|a|^{2j}} \leq f_R(a) \frac{H_a}{|a|^{2j}} \\
& = \left(- i \frac{ \bar{K} }{ \int_{ \partial D_{\bar{K}}^+ } |\Psi_p|^2 \, ds } \kappa_R + o(1) \right) 
\left( |\beta|^2 \frac{ \int_{ \partial D_{\bar{K}}^+} |\Psi_p|^2 \, ds }{ \bar{K} } + o(1) \right),
\end{align*}
as $a=|a|p \to 0$. Hence,
\begin{align*}
i |\beta|^2 \tilde{\kappa}_R \leq \liminf_{a=|a|p\to 0}
\frac{\lambda_N - \lambda_N^a}{|a|^{2j}} \leq
 \limsup_{a=|a|p\to 0} \frac{\lambda_N - \lambda_N^a}{|a|^{2j}} \leq - i |\beta|^2 \kappa_R,
\end{align*}
for every $R > \tilde{R}$. From Lemmas \ref{l:lemmakappaR} and
\ref{l:limitfRa}, by letting $R \to + \infty$, we obtain that
\[
- 2 |\beta|^2 \mathfrak{m}_p \leq \liminf_{|a| \to 0^+} \frac{\lambda_N - \lambda_N^a}{|a|^{2j}} \leq \limsup_{|a|\to 0^+} \frac{\lambda_N - \lambda_N^a}{|a|^{2j}} \leq - 2 |\beta|^2 \mathfrak{m}_p,
\] 
which yields that 
\[
\lim_{a=|a|p\to 0} \frac{\lambda_N - \lambda_N^a}{|a|^{2j}}=- 2
|\beta|^2 \mathfrak{m}_p,
\]
thus proving \eqref{eq:38} together with Theorem
\ref{t:main_straight2}. Statements (i),(ii), and (iii) of Theorem
\ref{t:main_straight1} follow from  combination of Theorem
\ref{t:main_straight2}, Lemma \ref{lemma:mp_signs}, and Proposition \ref{p:continuity_m_p}.
\end{proof}

\appendix
 \section*{Appendix: Hardy \& Poincaré inequalities}
 \setcounter{section}{1}
 \setcounter{theorem}{0}
 \setcounter{equation}{0}

In this appendix we recall some well-known Hardy and Poincaré-type
inequalities used throughout the paper.

In 
\cite{LW99} the
following Hardy-type inequalities were proved:
\begin{equation}\label{eq:hardy_R2}
  \int_{\R^2} |(i\nabla+A_a)u|^2\,dx \geq \frac14 \int_{\R^2}\frac{|u(x)|^2}{|x-a|^2}\,dx,
\end{equation}
which holds for all functions $u\in{\mathcal D}^{1,2}_a(\R^2)$, being 
${\mathcal D}^{1,2}_a(\R^2)$ the completion of $C^\infty_c(\R^2
\setminus \{a\},\C)$ with respect to the norm
$\|(i\nabla+A_a)u\|_{L^2(\R^2,\C^2)}$,
and
\begin{equation}\label{eq:hardy}
  \int_{D_r(a)} |(i\nabla+A_a)u|^2\,dx \geq \frac14 \int_{D_r(a)}\frac{|u(x)|^2}{|x-a|^2}\,dx,
\end{equation}
which holds for all $r>0$, $a\in \R^2$  and $u\in H^{1,a}(D_r(a),\C)$,
see also \cite[Lemma 3.1 and Remark 3.2]{FFT2011}.

We also recall from \cite{noris2015aharonov} two Poincaré-type inequalities in half-balls.
\begin{lemma}[{\cite[Lemma 3.3]{noris2015aharonov}}] \label{lemma:Poincare}
Let $r>0$ and $a \in D_r^+$. For all $u \in H^{1,a}(D_r^+,\C)$, with $u=0$ on $\{x_1=0\}$, we have
\begin{align} \label{eq:Poincare}
\frac{1}{r^{2}} \int_{D_r^+} |u|^{2} \,dx \leq \frac{1}{r} \int_{\partial D_r^+} |u|^{2} \,ds + \int_{D_r^+} |(i \nabla + A_a) u|^2 \,dx.
\end{align}
\end{lemma}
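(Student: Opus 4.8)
The plan is to reduce the statement --- via the diamagnetic inequality and a one-dimensional slicing in polar coordinates centered at the origin --- to an elementary weighted Poincar\'e inequality on rays.

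\emph{Step 1: reduction to the magnetic-free, real case.} First I would observe that for $u\in H^{1,a}(D_r^+,\C)$ one has $u\in H^1(D_r^+,\C)$ (since $\|\nabla u\|_{L^2(D_r^+,\C^2)}$ is controlled by the $H^{1,a}$-norm \eqref{eq:norm_def}) and $A_au\in L^2(D_r^+,\C^2)$ (since $u/|x-a|\in L^2$), so $(i\nabla+A_a)u$ is a well-defined $L^2$-vector field. By the diamagnetic inequality, $|u|\in H^1(D_r^+)$ and $|\nabla|u||\le|(i\nabla+A_a)u|$ a.e.\ in $D_r^+$; this follows from the pointwise identity $\nabla(|u|^2)=2\Im\big(\overline u\,(i\nabla+A_a)u\big)$, valid in $D_r^+\setminus\{a\}$ (hence a.e.) after approximating $u$ by smooth functions vanishing near $a$, together with the fact that $\nabla|u|=0$ a.e.\ on $\{u=0\}$. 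Since $\int_{D_r^+}|u|^2$ and $\int_{\partial D_r^+}|u|^2$ are unchanged when $u$ is replaced by $v:=|u|$, and since $\int_{D_r^+}|\nabla v|^2\le\int_{D_r^+}|(i\nabla+A_a)u|^2$, it suffices to prove, for every real $v\in H^1(D_r^+)$ vanishing on $\{x_1=0\}$,
\[
\frac{1}{r^{2}}\int_{D_r^+}|v|^{2}\,dx\le\frac1r\int_{\partial D_r^+}|v|^{2}\,ds+\int_{D_r^+}|\nabla v|^{2}\,dx.
\]

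\emph{Step 2: slicing and a $1$D inequality.} Passing to polar coordinates $x=\rho(\cos t,\sin t)$, $\rho\in(0,r)$, $t\in(-\tfrac\pi2,\tfrac\pi2)$, I would discard the angular part of the gradient to get $\int_{D_r^+}|\nabla v|^2\,dx\ge\int_{-\pi/2}^{\pi/2}\int_0^r(\partial_\rho v)^2\rho\,d\rho\,dt$, and note $\tfrac1r\int_{\partial D_r^+}|v|^2\,ds=\int_{-\pi/2}^{\pi/2}v(r,t)^2\,dt$ (the segment $\{x_1=0\}$ contributes nothing, as $v=0$ there). By Fubini, for a.e.\ $t$ the map $\rho\mapsto v(\rho,t)$ is absolutely continuous with square-integrable derivative and has a well-defined value at $\rho=r$ (the trace of $v$ on $\partial D_r^+$). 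For such $t$, integrating the identity $\tfrac{d}{d\rho}\big(\tfrac{\rho^2}{2}v^2\big)=\rho v^2+\rho^2 v\,\partial_\rho v$ over $(0,r)$ gives $\int_0^r\rho v^2\,d\rho=\tfrac{r^2}{2}v(r,t)^2-\int_0^r\rho^2 v\,\partial_\rho v\,d\rho$; estimating $|\rho^2 v\,\partial_\rho v|\le\tfrac12\rho v^2+\tfrac12\rho^3(\partial_\rho v)^2$ by Young's inequality, absorbing $\tfrac12\int_0^r\rho v^2$ on the left, and bounding $\rho^3\le r^2\rho$, I obtain $\tfrac1{r^2}\int_0^r v(\rho,t)^2\rho\,d\rho\le v(r,t)^2+\int_0^r(\partial_\rho v(\rho,t))^2\rho\,d\rho$. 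Integrating over $t\in(-\tfrac\pi2,\tfrac\pi2)$ and combining with the two identities above yields the displayed inequality for $v$, hence the claim for $u$.

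\emph{Main obstacle.} The only genuinely delicate point is the diamagnetic reduction in Step 1: Kato's inequality must be justified for functions of the weighted space $H^{1,a}(D_r^+,\C)$, where $A_a$ is singular at the interior pole $a$. The hard part will be the (standard) approximation argument needed here --- first establishing everything for $u$ smooth and vanishing near $a$, where all manipulations are classical in $D_r^+\setminus\{a\}$, and then passing to the limit in $H^{1,a}(D_r^+,\C)$, using the continuity of the trace $H^{1,a}(D_r^+,\C)\to L^2(\partial D_r^+,\C)$. Everything else is elementary.
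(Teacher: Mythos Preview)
The paper does not actually give a proof of this lemma: it is quoted in the appendix as \cite[Lemma~3.3]{noris2015aharonov} and only recalled, not re-proved. So there is no ``paper's own proof'' to compare against; your task was, in effect, to supply one.

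Your argument is correct. The diamagnetic reduction in Step~1 is the right move: once you pass to $v=|u|$, all dependence on the singular pole $a$ disappears and you are left with an elementary weighted $1$D inequality on rays. The identity $\tfrac{d}{d\rho}\bigl(\tfrac{\rho^2}{2}v^2\bigr)=\rho v^2+\rho^2 v\,\partial_\rho v$, the Young splitting $|\rho^2 v\,\partial_\rho v|\le\tfrac12\rho v^2+\tfrac12\rho^3(\partial_\rho v)^2$, absorption, and the crude bound $\rho^3\le r^2\rho$ give exactly the constant~$1$ in front of each term on the right. One minor point you pass over silently: the boundary contribution at $\rho=0$ in the integration by parts vanishes because, for a.e.\ $t$, the $H^1$ slice estimate gives $|v(\rho,t)|=O(|\log\rho|^{1/2})$ as $\rho\to0^+$, so $\rho^2 v(\rho,t)^2\to0$; this is routine but worth one line. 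Your assessment of the ``main obstacle'' is accurate: the only non-elementary input is Kato's diamagnetic inequality for functions in $H^{1,a}$, which is indeed handled by approximating with smooth functions vanishing near $a$ and passing to the limit.
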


\begin{lemma}[{\cite[Lemma 3.4]{noris2015aharonov}}] \label{lemma:Poincare_type}
Let $r>0$ and $a \in D_r^+$. For all $u \in H^{1,a}(D_r^+,\C)$, with $u=0$ on $\{x_1=0\}$, we have
\begin{align*} 
\frac{1}{r} \int_{\partial D_r^+} |u|^{2}\,ds \leq \int_{D_r^+} |(i \nabla + A_a) u|^{2}\,dx.
\end{align*}
\end{lemma}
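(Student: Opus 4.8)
The plan is to reduce the magnetic inequality to a magnetic–free one via a diamagnetic estimate, and then to prove the latter by a ``ground state substitution'' based on the function $\Phi(x)=x_1$, which is the first harmonic function in $D_r^+$ vanishing on $\{x_1=0\}$ and homogeneous of degree one (i.e.\ $\psi_1$ in the notation of \eqref{eq:psi_j}).

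First I would invoke the diamagnetic inequality (see \cite{LW99}, \cite{FFT2011}): for $u\in H^{1,a}(D_r^+,\C)$ with $u=0$ on $\{x_1=0\}$, the function $v:=|u|$ belongs to $H^1(D_r^+)$, satisfies $|\nabla v|\le|(i\nabla+A_a)u|$ a.e.\ in $D_r^+$, vanishes on $\{x_1=0\}$, and its trace on $\partial D_r^+$ has the same $L^2$–norm as that of $u$. Hence it suffices to prove that
\[
\frac1r\int_{\partial D_r^+}v^2\,ds\le\int_{D_r^+}|\nabla v|^2\,dx
\]
for every nonnegative $v\in H^1(D_r^+)$ vanishing on $\{x_1=0\}$. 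Since the flat portion $\{x_1=0\}\cap\overline{D_r^+}$ of the boundary has positive capacity, a standard density argument lets me further assume $v\in C^\infty(\overline{D_r^+})$, with $v$ vanishing in a neighborhood of $\{x_1=0\}$.

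For such $v$, set $\Phi(x)=x_1$, so that $\Phi>0$ in $D_r^+$ and $\Delta\Phi=0$. Completing the square and using harmonicity of $\Phi$,
\[
\int_{D_r^+}|\nabla v|^2\,dx
=\int_{D_r^+}\Bigl|\nabla v-\tfrac v\Phi\nabla\Phi\Bigr|^2\,dx
+\int_{D_r^+}\operatorname{div}\Bigl(\tfrac{v^2}\Phi\,\nabla\Phi\Bigr)\,dx
\ge\int_{\partial D_r^+}\frac{v^2}\Phi\,\partial_\nu\Phi\,ds,
\]
where I applied the divergence theorem (legitimate since $v$ vanishes near the singular set of $1/\Phi$) and discarded the first, nonnegative term. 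On the flat part of $\partial D_r^+$ the boundary integrand vanishes since $v$ does; on the arc $\{|x|=r,\ x_1>0\}$ one has $\nu=x/r$, hence $\partial_\nu\Phi=e_1\cdot x/r=x_1/r=\Phi/r$, i.e.\ $\partial_\nu\Phi/\Phi\equiv 1/r$ there. Therefore $\int_{D_r^+}|\nabla v|^2\,dx\ge\frac1r\int_{\partial D_r^+}v^2\,ds$, which is the assertion; the same computation shows equality holds exactly when $v$ is a constant multiple of $x_1$.

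The one delicate point is the integration by parts against the weight $1/\Phi$, which is singular on $\{x_1=0\}$. Under the above smooth reduction this causes no trouble; arguing directly instead, one integrates over $D_r^+\cap\{x_1>\varepsilon\}$, observes that the extra boundary contribution $-\tfrac1\varepsilon\int_{\{x_1=\varepsilon\}\cap D_r}v^2\,ds$ is nonpositive and tends to $0$ along a suitable sequence $\varepsilon\to0^+$ — because the one–dimensional Hardy inequality in the $x_1$–variable gives $v/x_1\in L^2$ near $\{x_1=0\}$ — and then lets $\varepsilon\to0$. An alternative proof, avoiding $\Phi$ altogether, rescales to $r=1$, expands $v$ in the angular eigenbasis $\{\sin(k(\tfrac\pi2-t))\}_{k\ge1}$ of $H^1_0\bigl(-\tfrac\pi2,\tfrac\pi2\bigr)$, and reduces matters to the one–dimensional estimate $c(1)^2\le\int_0^1\bigl(\rho|c'|^2+\tfrac{k^2}\rho c^2\bigr)\,d\rho$ for each Fourier coefficient, which follows from the substitution $c(\rho)=\rho^k g(\rho)$ and a single integration by parts; but the computation with $\Phi$ above is shorter.
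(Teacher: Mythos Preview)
Your argument is correct. The paper itself does not supply a proof of this lemma: it simply quotes the result as \cite[Lemma 3.4]{noris2015aharonov} and refers the reader there. So there is nothing to compare against in the present paper; you have furnished a complete, self-contained proof where the authors chose to cite one.

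A few remarks on what you wrote. The reduction via the diamagnetic inequality $|\nabla|u||\le|(i\nabla+A_a)u|$ is the natural first move and is fully justified for $u\in H^{1,a}(D_r^+,\C)$. The ground-state substitution with $\Phi(x)=x_1$ is clean: the identity
\[
|\nabla v|^2=\Bigl|\nabla v-\tfrac{v}{\Phi}\nabla\Phi\Bigr|^2+\operatorname{div}\Bigl(\tfrac{v^2}{\Phi}\nabla\Phi\Bigr)
\]
(using $\Delta\Phi=0$) together with $\partial_\nu\Phi/\Phi=1/r$ on the curved part of $\partial D_r^+$ gives exactly the claimed boundary term, and your handling of the singularity of $1/\Phi$ on $\{x_1=0\}$---either by density of smooth functions vanishing near the flat boundary, or by cutting off at $\{x_1>\varepsilon\}$ and invoking the one-dimensional Hardy inequality to control $v/x_1$---is standard and correct. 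The Fourier-mode alternative you sketch is essentially the argument one finds in \cite{noris2015aharonov}; your $\Phi$-based computation is indeed shorter and has the merit of identifying the equality case $v=c\,x_1$ transparently.
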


\end{document}